\providecommand\@dotsep{5}
\def\listtodoname{List of Todos}
\def\listoftodos{\@starttoc{tdo}\listtodoname}
\numberwithin{equation}{section}
\newcommand{\Lp}[1]{L^{#1}(\Omega)}
\newcommand{\Wpzero}[1]{W^{1,#1}_0(\Omega)}
\newcommand*\diff{\mathrm{d}}
\newcommand\V{W^{1,\mathcal{H}}_0(\Omega)}
\newcommand{\R}{\mathbb{R}}
\newcommand{\Om} {\Omega}
\newcommand{\la} {\lambda}
\newcommand{\mb} {\mathbb}
\newcommand{\mc} {\mathcal}
\newtheorem{Theorem}{Theorem}[section]
\newtheorem{Lemma}[Theorem]{Lemma}
\newtheorem{Proposition}[Theorem]{Proposition}
\newtheorem{Corollary}[Theorem]{Corollary}
\newtheorem{Remark}[Theorem]{Remark}
\newtheorem{Definition}[Theorem]{Definition}
\begin{document}

\title[Anisotropic double phase singular problem with sign changing nonlinearity]
{On an anisotropic double phase problem with singular and sign changing nonlinearity}
\author{Prashanta Garain and Tuhina Mukherjee}

\address[Prashanta Garain ]
{\newline\indent Department of Mathematics
	\newline\indent
Uppsala University
\newline\indent
Uppsala-75106, Sweden
\newline\indent
Email: {\tt pgarain92@gmail.com} }

\address[Tuhina Mukherjee ]
{\newline\indent Department of Mathematics
\newline\indent
Indian Institute of Technology Jodhpur
\newline\indent
Rajasthan-342030, India
\newline\indent
Email: {\tt tuhina@iitj.ac.in} }

%\pretolerance
\begin{abstract}
This article consists of study of anisotropic double phase problems with singular term and sign changing subcritical as well as critical nonlinearity. Seeking the help of well known Nehari manifold technique, we establish existence of at least two opposite sign energy solutions in the subcritical case and one negative energy solution in the critical case. The results in the critical case is even new in the classical $p$-Laplacian case. 
\end{abstract}

\subjclass[2010]{35A15, 35J62, 35J92, 35J75}

\keywords{Double phase problem, anisotropic $(p,q)$-Laplace operator, Existence, Sign changing nonlinearity, Singular problem.}

\maketitle

\section{Introduction}
In this article, we study existence of weak solutions for the following singular anisotropic double phase problem
\begin{equation}\label{maineqn}
\begin{split}
-H_{p,q}u&=a(x)u^{-\delta}+\lambda b(x)u^{\alpha}\text{ in }\Omega,\\
 u&>0\text{ in }\Omega,\\
 u&=0\text{ on }\partial\Om,
\end{split}
\end{equation}
where $\Om$ is a bounded Lipschitz domain in $\mathbb{R}^N$, $N\geq 2$. The operator
\begin{equation}\label{fpq}
H_{p,q}u=\text{div}(H(\nabla u)^{p-1}\nabla_{\eta}H(\nabla u)+\mu(x)H(\nabla u)^{q-1}\nabla_{\eta}H(\nabla u))
\end{equation}
is referred to as the double phase anisotropic $(p,q)$-Laplace operator, where $\nabla_{\eta}H(\nabla u)$ denotes the gradient of the Finsler-Minkowski norm $H$ (defined in Section 2) with respect to $\nabla u$.

Unless otherwise stated, throughout the paper, we assume that 
\begin{enumerate}
\item[(H1)] $\la>0$,\,\,$0<\delta<1$, $2\leq p<q<N$,\,\,$q<\alpha+1\leq p^*=\frac{Np}{N-p}$, $\frac{q}{p}< 1+ \frac{1}{N}$ and $0\leq \mu(\cdot)\in C^{0,1}(\overline{\Om})$.
\end{enumerate}
In the subcritical case, that is when $\alpha+1<p^{*}$, we assume that
\begin{enumerate}
\item[(H2)] $a>0$ a.e. in $\Om$ such that $a\in L^\infty(\Om)$ and $b\in L^{\frac{p^*}{p^*-\alpha-1}}(\Omega)$ may change sign in $\Om$.
\end{enumerate}
Moreover, in the critical case $\alpha+1=p^{*}$, we assume that
\begin{enumerate}
    \item[(H3)] $a>0$ a.e. in $\Om$ and $a\in L^{\frac{p^*}{p^*-1+\delta}}(\Om)$,
    \item[(H4)] $b$ is possibly sign-changing a.e. in $\Om$ with $b^+\not\equiv 0$ and $b\in L^\infty(\Om)$.
\end{enumerate}
Equation \eqref{maineqn} is closely associated with the double phase functional
$$
v\longmapsto \int_{\Om}(H(\nabla v)^p+\mu(x)H(\nabla v)^q)\,dx,
$$
where the integrand $F(x,\zeta)=H(\zeta)^p+\mu(x)(x)H(\zeta)^q$ satisfies the non-standard growth condition
$$
k_1|\zeta|^p\leq k_2 H(\zeta)^p\leq F(x,\zeta)\leq k_3(1+H(\zeta)^q)\leq k_4(1+|\zeta|^q),
$$
for a.e. $x\in\Om$ and for all $\zeta\in\mathbb{R}^N$, where $k_i>0$ for all $i=1,\cdots,4$. In the classical case $H(\zeta)=|\zeta|$, such  functional was first introduced by Zhikov \cite{Zhi1, Zhi2} which has a wide range of applications in material science, see Zhikov-Kozlov-Oleinik \cite{Zhi3} and the references therein. The double phase behavior of such functional is captured by the weight function $\mu(\cdot)$. More precisely, the $p$-growth term governs the functional over the set $\{\mu(x)=0\}$ and the $q$-growth term governs the functional over $\{\mu(x)\neq 0\}$ which motivates \eqref{maineqn} to be referred as a double phase equation. Further, a colossal amount of literature can be found, for example, in Colombo-Mingeone \cite{Min2, Min3}, Baroni-Kuusi-Mingione \cite{BKM1}, Marcellini \cite{Mar1, Mar2}, Ok \cite{Ok1, Ok2}, Ragusa-Tachikawa \cite{Ragu1} and the references therein.

We call \eqref{maineqn} as a singular problem due to appearance of the term $u^{-\delta}$ for $\delta\in(0,1)$, which clearly blows up near the boundary(origin). Such type of phenomenon in the context of double phase equation has captured the attention of researchers recently, refer Liu-Dai-Papageorgiou-Winkert \cite{win}. More precisely, when $F(\zeta)=|\zeta|$, existence of multiple weak solutions has been proved in \cite{win} for the singular double phase equation in the subcrtitical case but the nonlinearities were not sign-changing.

Although it is worth mentioning that anisotropic singular problem is very less understood. When $a(x)=0$, singular anisotropic problems is studied in Biset-Mebrate-Mohammed \cite{BMM20}, Farkas-Winkert \cite{PF20} and Farkas-Fiscella-Winkert \cite{PF21}, Bal-Garain-Mukherjee \cite{BGM}. To the best of our knowledge, in the double phase context, anisotropic singular problems has been first discussed in Farkas-Winkert \cite{Win2}, where the authors proved existence of one weak solution in the critical case.

%in a certain range of the parameter $\la>0$ for the following equation
%\begin{equation}\label{Wineqn}
%-H_{p,q}u=f(u)\text{ in }\Om,\,u>0\text{ in }\Om,\,u=0\text{ on }\partial\Om,
%\end{equation}
%where $f(u)=u^{p^* -1}+\lambda(u^{-\delta}+g(u))$, where $0<\delta<1$, $2\leq p<q<N$ such that $\frac{q}{p}<1+\frac{1}{N}$ and $0\leq a(\cdot)\in C^{0,1}(\overline{\Om})$, where $g:\mathbb{R}\to\mathbb{R}$ is a continuous function satisfying $g(s)\leq c_1 s^{\nu-1}+c_2 s^{\theta-1}$ for all $s\geq 0$, where $1<\theta<p\leq\nu<p^{*}$.

In this article, our main motive is to study the equation \eqref{maineqn} in both the subcritical and critical case. Another striking feature in the equation \eqref{maineqn} lies in the sign changing behavior of the nonlinearity. To be more precise, in the subcritical case $\alpha+1<p^*$, we prove existence of multiple weak solutions (see Theorem \ref{scthm}), whereas in the critical case $\alpha+1=p^*$, we prove existence of one weak solution (see Theorem \ref{cthm}) of the problem \eqref{maineqn}. To prove our main results, we follow the Nehari manifold approach where we minimized the energy functional over suitable subsets of $W^{1,\mathcal H}_0(\Om)$ and considerably established them as weak solutions of our problem \eqref{maineqn}.  Our aim behind this article is to develop the Nehari manifold theory for double phase anisotropic singular problems having sign changing weight function which is completely new as per current literature available. We also remark that the study of \eqref{maineqn} in critical case for  the classical $p$-Laplacian case for sign-changing weight functions is not studied till now. So we provide an interesting result in this regard.

Due to the double phase structure of our equation \eqref{maineqn}, we investigate the solutions in the Musielak-Orlicz Sobolev space $W_0^{1,\mathcal{H}}(\Om)$ (see Section 2 for more details). The notion of weak solution for the equation \eqref{maineqn} is defined as follows:
\begin{Definition}\label{wksol}
We say that $u\in W_0^{1,\mathcal{H}}(\Om)$ is a weak solution of the problem \eqref{maineqn}, if $u>0$ a.e. in $\Om$ and for every $\phi\in C_c^{1}(\Om)$, we have
\begin{equation}\label{wksoleqn}
\begin{split}
&\int_{\Om}(H(\nabla u)^{p-1}+\mu(x)H(\nabla u)^{q-1})\nabla_{\eta}H(\nabla u)\nabla\phi\,dx=\int_{\Om}(a(x)u^{-\delta}+\la b(x)u^{\alpha})\phi\,dx.
\end{split}
\end{equation}
\end{Definition}
For $\lambda>0$, we define the energy functional $I_{\lambda}: W_{0}^{1,\mathcal{H}}(\Om)\to\R$ associated with the problem \eqref{maineqn} by 
\begin{equation}\label{engfnl*}
\begin{split}
I_{\lambda}(u)&=\int_{\Om}\Big(\frac{H(\nabla u)^p}{p}+\mu(x)\frac{H(\nabla u)^q}{q}\Big)\,dx-\frac{1}{1-\delta}\int_{\Om}a(x)|u|^{1-\delta}\,dx-\frac{\lambda}{\alpha+1}\int_{\Om}b(x)|u|^{\alpha+1}\,dx.
\end{split}
\end{equation}
We say that $u\in W_0^{1,\mathcal{H}}(\Omega)$ has a positive (resp. negative) energy if $I_{\la}(u)>0$ (resp. $<0$). 
\subsection{Statement of the Main results:} Our main results in this article reads as follows:
\begin{Theorem}\label{scthm}
Suppose that $(H1)$ and $(H2)$ are satisfied for $\alpha+1<p^*$. Then there exists a $\Lambda>0$ such that for every $\la\in(0,\Lambda)$, the problem \eqref{maineqn} admits at least two distinct weak solutions in $W_0^{1,\mathcal{H}}(\Om)$, one with positive and other with negative energy. 
\end{Theorem}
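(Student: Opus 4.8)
The plan is to run a Nehari-manifold/fibering-map analysis adapted to the non-differentiability caused by the singular term. For $u\in\V\setminus\{0\}$ and $t>0$ set $\phi_u(t):=I_\la(tu)$, and abbreviate $\rho(u):=\int_\Om H(\nabla u)^p\,dx$, $\rho_\mu(u):=\int_\Om\mu(x)H(\nabla u)^q\,dx$, $A(u):=\int_\Om a|u|^{1-\delta}\,dx$, $B(u):=\int_\Om b|u|^{\al+1}\,dx$, so that $\phi_u(t)=\tfrac{t^p}{p}\rho(u)+\tfrac{t^q}{q}\rho_\mu(u)-\tfrac{t^{1-\delta}}{1-\delta}A(u)-\tfrac{\la t^{\al+1}}{\al+1}B(u)$. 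I define the Nehari set $\mc N_\la=\{u\in\V\setminus\{0\}:\phi_u'(1)=0\}$ and split it into $\mc N_\la^{+},\mc N_\la^{-},\mc N_\la^{0}$ according to the sign of $\phi_u''(1)$. Since $A(u)>0$ for every $u\neq0$ (as $a>0$ a.e.), the exponent ordering $1-\delta<p<q<\al+1$ governs the shape of $\phi_u$: it starts at $0$, falls to a negative local minimum, and its behaviour at $+\infty$ is dictated by the sign of $B(u)$.

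First I would carry out the fibering analysis. Multiplying $\phi_u'(t)=0$ by $t^{\delta}$ reduces the critical-point equation to $\Psi_u(t)=A(u)$, where $\Psi_u(t):=t^{p-1+\delta}\rho(u)+t^{q-1+\delta}\rho_\mu(u)-\la t^{\al+\delta}B(u)$; since $\phi_u''(t)=t^{-\delta}\Psi_u'(t)$ at a critical point, points of $\mc N_\la^+$ (resp. $\mc N_\la^-$) correspond to $\Psi_u$ increasing (resp. decreasing). When $B(u)\le0$, $\Psi_u$ is strictly increasing, giving a unique critical point, a global minimiser lying in $\mc N_\la^+$; when $B(u)>0$, $\Psi_u$ rises to a single maximum and then decreases to $-\infty$, so for $\la$ small enough that $\max_t\Psi_u>A(u)$ there are exactly two critical points $t^+(u)<t^-(u)$ with $t^+(u)u\in\mc N_\la^+$ and $t^-(u)u\in\mc N_\la^-$. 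Choosing $\La>0$ so small (after normalising $u$) that the degenerate case $\max_t\Psi_u=A(u)$ never occurs yields $\mc N_\la^0=\emptyset$, hence $\mc N_\la=\mc N_\la^+\cup\mc N_\la^-$ with the two pieces separated. On $\mc N_\la$ the constraint $\rho(u)+\rho_\mu(u)=A(u)+\la B(u)$ lets me rewrite $I_\la(u)=(\tfrac1p-\tfrac1{\al+1})\rho(u)+(\tfrac1q-\tfrac1{\al+1})\rho_\mu(u)-(\tfrac1{1-\delta}-\tfrac1{\al+1})A(u)$; using H\"older with the integrability exponents in $(H2)$ and the continuous embedding of $\V$ into $L^{\al+1}(\Om)$ and $L^{1-\delta}(\Om)$, the term $A(u)$ is dominated by a sublinear power of the norm, so $I_\la$ is coercive and bounded below on $\mc N_\la$.

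Next I would minimise over each component. Set $m_\la^\pm=\inf_{\mc N_\la^\pm}I_\la$. Because the local-minimum value satisfies $\phi_u(t^+(u))<\phi_u(0)=0$, one gets $m_\la^+<0$; and a quantitative lower bound on the norm over $\mc N_\la^-$ (which sharpens as $\la\to0$, since the larger root $t^-(u)$ blows up) shows that the two positive higher-order terms dominate there and force $m_\la^->0$, which is precisely where the smallness of $\La$ is used to produce the energy dichotomy. Given a minimising sequence in $\mc N_\la^\pm$, coercivity gives boundedness, and the compact embeddings $\V\hookrightarrow\hookrightarrow L^{\al+1}(\Om),L^{1-\delta}(\Om)$ (valid in the subcritical range, with $(H1)$ guaranteeing the Musielak--Orlicz--Sobolev functional framework) make $A$ and $B$ weakly continuous while $\rho,\rho_\mu$ are weakly lower semicontinuous; passing to a weak limit $u_\pm$ and using $\mc N_\la^0=\emptyset$ together with the norm bound to prevent the limit from degenerating, the infima are attained at some $u_\pm\in\mc N_\la^\pm$.

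The main obstacle is the last step: showing that the constrained minimisers $u_\pm$ are genuine weak solutions, since $I_\la$ fails to be $C^1$ because of the term $\int_\Om a|u|^{1-\delta}\,dx$. I would first note $u_\pm>0$ a.e. (if $u_\pm$ vanished on a set of positive measure, the singular term would allow the energy to be decreased within the same component). Then, for fixed $0\le\phi\in C_c^1(\Om)$ and small $t>0$, the fibering analysis provides a $C^1$ map $t\mapsto s(t)$ with $s(0)=1$ and $s(t)(u_\pm+t\phi)\in\mc N_\la$; minimality gives $I_\la\big(s(t)(u_\pm+t\phi)\big)\ge I_\la(u_\pm)$, and dividing by $t$, letting $t\to0^+$, and invoking Fatou's lemma on the singular term yields both $a\,u_\pm^{-\delta}\phi\in L^1(\Om)$ and the one-sided inequality $\int_\Om(H(\nabla u_\pm)^{p-1}+\mu(x) H(\nabla u_\pm)^{q-1})\nabla_\eta H(\nabla u_\pm)\nabla\phi\,dx\ge\int_\Om(a\,u_\pm^{-\delta}+\la b\,u_\pm^{\al})\phi\,dx$ for all such $\phi$. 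A standard truncation/density argument (testing with $(u_\pm+\epsilon\psi)^+$ and using the identity $\phi_{u_\pm}'(1)=0$ already available in the radial direction) then upgrades this to equality for every test function, so $u_\pm$ solve \eqref{maineqn} in the sense of Definition \ref{wksol}. Finally $u_+\in\mc N_\la^+$ and $u_-\in\mc N_\la^-$ are distinct because $m_\la^+<0<m_\la^-$, giving the two required solutions of opposite energy sign. I expect the delicate points to be (i) quantifying $\La$ so that simultaneously $\mc N_\la^0=\emptyset$ and $m_\la^->0$, and (ii) the passage from the one-sided inequality to the full weak formulation in the anisotropic double-phase setting.
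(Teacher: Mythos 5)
Your proposal follows essentially the same route as the paper: the same Nehari decomposition $M_\la=M_\la^+\cup M_\la^-\cup M_\la^0$ with $M_\la^0=\emptyset$ for small $\la$, the same fibering analysis (your equation $\Psi_u(t)=A(u)$ is just an algebraic rearrangement of the paper's $\sigma_u(t)=\la B$), the same sign dichotomy $m_\la^+<0<m_\la^-$, attainment of both infima by coercivity and compactness, and the same passage to weak solutions via the implicit-function-theorem perturbation lemma, Fatou's lemma on the singular term, and testing with $(u+\epsilon\phi)^+$. The argument is correct and matches the paper's proof in all essential respects.
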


\begin{Theorem}\label{cthm}
Suppose that $(H1),\,(H3)$ and $(H4)$ are satisfied for $\alpha=p^*-1$. Then there exists a $\Lambda_{**}>0$ such that for every $\la\in(0,\Lambda_{**})$, the problem \eqref{maineqn} admits at least one negative energy weak solution.
\end{Theorem}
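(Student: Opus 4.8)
The plan is to run the Nehari manifold method, but to concentrate on the sub-manifold built from the local minima of the fibering maps, where the energy is negative; since Theorem~\ref{cthm} only claims one negative-energy solution, the delicate mountain-pass analysis on the complementary branch (needed for a second, positive-energy solution in the critical regime) can be avoided.

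First, for $u\in\V\setminus\{0\}$ I would study the fibering map $\phi_u(t)=I_\lambda(tu)$, whose derivative is
\begin{equation*}
\phi_u'(t)=t^{p-1}\!\int_\Omega H(\nabla u)^p\,dx+t^{q-1}\!\int_\Omega \mu\,H(\nabla u)^q\,dx-t^{-\delta}\!\int_\Omega a|u|^{1-\delta}\,dx-\lambda t^{p^*-1}\!\int_\Omega b|u|^{p^*}\,dx,
\end{equation*}
and decompose $\mathcal N_\lambda=\{u\neq0:\phi_u'(1)=0\}$ into $\mathcal N_\lambda^\pm,\mathcal N_\lambda^0$ according to the sign of $\phi_u''(1)$. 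Because the exponents obey $1-\delta<p<q<p^*=\alpha+1$ and $a>0$ a.e., the singular term dominates as $t\to0^+$, forcing $\phi_u(t)<0$ there, while the critical term controls $t\to\infty$; a threshold computation in $\lambda$, identical in spirit to the concave–convex theory, then produces $\Lambda_{**}>0$ so that $\mathcal N_\lambda^0=\emptyset$ for $\lambda\in(0,\Lambda_{**})$. Consequently $\mathcal N_\lambda^+$ is relatively closed, every $u\in\mathcal N_\lambda^+$ is a strict local minimum of $\phi_u$ with $I_\lambda(u)<0$, and eliminating the critical term via the Nehari constraint gives
\begin{equation*}
I_\lambda(u)=\Big(\tfrac1p-\tfrac1{p^*}\Big)\!\int_\Omega H(\nabla u)^p\,dx+\Big(\tfrac1q-\tfrac1{p^*}\Big)\!\int_\Omega \mu\,H(\nabla u)^q\,dx-\Big(\tfrac1{1-\delta}-\tfrac1{p^*}\Big)\!\int_\Omega a|u|^{1-\delta}\,dx
\end{equation*}
on $\mathcal N_\lambda$. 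Since $1-\delta<p$ and, by $(H3)$ and the Sobolev embedding, $\int_\Omega a|u|^{1-\delta}\le\|a\|_{p^*/(p^*-1+\delta)}\|u\|_{p^*}^{1-\delta}\lesssim\|u\|^{1-\delta}$, this shows $I_\lambda$ is coercive and bounded below on $\mathcal N_\lambda^+$, so $c^+:=\inf_{\mathcal N_\lambda^+}I_\lambda\in(-\infty,0)$.

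Then I would take a minimizing sequence $\{u_n\}\subset\mathcal N_\lambda^+$, which is bounded by coercivity, and pass to a weak limit $u_n\rightharpoonup u_0$ in $\V$ with $u_n\to u_0$ a.e.\ and in every $L^s(\Omega)$, $s<p^*$. The singular term is weakly continuous: boundedness in $L^{p^*}$ makes $|u_n|^{1-\delta}$ converge weakly to $|u_0|^{1-\delta}$ in $L^{p^*/(1-\delta)}$, and $a\in L^{(p^*/(1-\delta))'}$ by $(H3)$, so $\int_\Omega a|u_n|^{1-\delta}\to\int_\Omega a|u_0|^{1-\delta}$. The only term without compactness is the critical one, and this is where the main difficulty lies. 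Writing $v_n=u_n-u_0\rightharpoonup0$ and applying the Brezis–Lieb lemma (to the weighted critical integral, using $b\in L^\infty$ from $(H4)$) together with a Brezis–Lieb splitting for the double-phase modular, the energy decomposes as $I_\lambda(u_n)=I_\lambda(u_0)+\big(\tfrac1p-\tfrac1{p^*}\big)\ell_p+\big(\tfrac1q-\tfrac1{p^*}\big)\ell_q+o(1)$ with $\ell_p+\ell_q=\lim\lambda\int_\Omega b|v_n|^{p^*}\,dx$, where $\ell_p=\lim\int_\Omega H(\nabla v_n)^p\,dx$ and $\ell_q=\lim\int_\Omega\mu H(\nabla v_n)^q\,dx$. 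Negativity of $c^+$ immediately forces $u_0\neq0$ (otherwise $I_\lambda(u_n)$ would tend to a nonnegative limit), and the Nehari identity passes to the limit to show that $u_0$ itself satisfies the Nehari constraint.

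Finally I would close the argument in two stages. Projecting $u_0$ along its fiber onto $\mathcal N_\lambda^+$ and using $\mathcal N_\lambda^0=\emptyset$, one gets $u_0\in\mathcal N_\lambda^+$ with $I_\lambda(u_0)=c^+$; the energy decomposition then collapses to $\ell_p=\ell_q=0$, i.e.\ $v_n\to0$ strongly by the uniform convexity and modular–norm equivalence of $\V$. (The smallness of $\Lambda_{**}$ enters here as a safeguard: via the Sobolev inequality for $H$ and $\|b\|_\infty$, any nonzero concentration would force $\ell_p$ above a positive threshold of order $S^{N/p}$, incompatible with the projection identity.) Thus $c^+$ is attained at $u_0\in\mathcal N_\lambda^+$. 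Replacing $u_0$ by $|u_0|$ leaves the energy unchanged, so $u_0\ge0$, and the singular term then yields $u_0>0$ a.e. To identify $u_0$ as a weak solution despite the non-differentiability of $I_\lambda$, I would test the minimization with $u_0+t\phi$, $\phi\ge0$, $t\downarrow0$, and use Fatou's lemma on the singular integral to obtain the variational inequality $\int_\Omega(H(\nabla u_0)^{p-1}+\mu H(\nabla u_0)^{q-1})\nabla_\eta H(\nabla u_0)\cdot\nabla\phi\,dx\ge\int_\Omega(a u_0^{-\delta}+\lambda b u_0^{p^*-1})\phi\,dx$, and then upgrade it to the equality \eqref{wksoleqn} for arbitrary test functions via the standard choice $\phi=u_0+\varepsilon\psi$ combined with $u_0\in\mathcal N_\lambda$. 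The single hardest point throughout is recovering compactness of the critical term; its resolution rests on the negativity of the level $c^+$, which both guarantees a nontrivial weak limit and, through the Brezis–Lieb energy identity, rules out concentration.
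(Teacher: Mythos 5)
Your overall strategy coincides with the paper's: minimize $I_\lambda$ over the branch $M_\lambda^+$, use that the infimum is negative to get a nontrivial weak limit, and recover the solution via a one-sided perturbation argument plus Fatou for the singular term. The gap is in the compactness step, which is precisely where the critical exponent bites. You assert that ``the Nehari identity passes to the limit to show that $u_0$ itself satisfies the Nehari constraint.'' It does not: applying Brezis--Lieb to the constraint $\phi_{u_n}'(1)=0$ only yields $\phi_{u_0}'(1)+\ell_p+\ell_q-\lambda d^{p^*}=0$, where $d^{p^*}=\lim\int_\Omega b|u_n-u_0|^{p^*}\,dx$, so $u_0\in M_\lambda$ is \emph{equivalent} to the absence of concentration ($\ell_p+\ell_q=\lambda d^{p^*}=0$ up to sign considerations), not a consequence of passing to the limit. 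Your subsequent step is then circular: you ``project $u_0$ onto $\mathcal N_\lambda^+$'' to get $I_\lambda(u_0)=c^+$ and from that deduce $\ell_p=\ell_q=0$, but the projection only gives some $t_1u_0\in M_\lambda^+$ with $c^+\le I_\lambda(t_1u_0)=\phi_{u_0}(t_1)$; relating $\phi_{u_0}(t_1)$ back to $c^+=I_\lambda(u_0)+\frac{\ell_p}{p}+\frac{\ell_q}{q}-\frac{\lambda d^{p^*}}{p^*}$ requires knowing where $1$ sits relative to the two fibering critical points $t_1<t_2$ of $\phi_{u_0}$ and the sign of $\phi_{u_0}'(1)$, none of which you control. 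Your parenthetical ``safeguard'' (that concentration forces $\ell_p$ above a Sobolev threshold) is the right idea but is invoked after the conclusion rather than used to obtain it, and it needs a companion ingredient you never establish: a $\lambda$-independent \emph{upper} bound on $\int_\Omega H(\nabla u)^p\,dx$ over $M_\lambda^+$ against which the threshold can be played off. In the paper this is the content of Proposition \ref{gap} (the bound $D_2$), the definition of $\Lambda_0$ via $S_2$, and the three-case analysis ($t_2<1$; $t_2\ge1$ with $\frac{l_1^p}{p}-\frac{\lambda d^{p^*}}{p^*}$ negative or nonnegative) in Proposition \ref{strong-con}.

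A second, related omission: the paper's case analysis uses the inequality $\ell_p+\ell_q\le\lambda d^{p^*}$ (equivalently $\phi_{u_0}'(1)\ge0$), which is \emph{not} derivable from the energy expansion alone. It comes from first establishing an asymptotic Euler--Lagrange identity along the minimizing sequence (Lemmas \ref{unif-bdd} and \ref{weak-solI}, obtained via the Ekeland variational principle, the closedness of $M_\lambda^-$ and $M_\lambda^+\cup\{0\}$, and the uniform bound on $\langle\zeta_k'(0),h\rangle$ via Lemmas \ref{liminf-pos-I}--\ref{liminf-pos-II}) and then testing it with $h=u_k-u_0$ together with Fatou applied to $\int_\Omega a(x)u_k^{-\delta}(u_k-u_0)\,dx$. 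Your proposal contains no substitute for this step. Without it, and without the gap structure and threshold choice of $\Lambda_{**}$, the concentration alternative $\ell_p+\ell_q=\lambda d^{p^*}>0$ is not excluded and the proof does not close.
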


\subsection{Functional setting and auxiliary results}
Here, $H$ is the Finsler-Minkowski norm, that is $H:\mathbb{R}^N\to[0,\infty)$ satisfies the following hypothesis:
\begin{enumerate}
\item[(A1)] $H(x)\geq 0$ for all $x\in\mathbb{R}^N$.
\item[(A2)] $H(x)=0$ if and only if $x=0$.
\item[(A3)] $H\in C^{\infty}(\mathbb{R}^N\setminus\{0\})$.
\item[(A4)] $H(tx)=|t|H(x)$ for all $x\in\mathbb{R}^N$ and $t\in\mathbb{R}$. 
\item[(A5)] The Hessian matrix $\nabla ^2(\frac{H^2}{2})(x)$ is positive definite at every $x\neq 0$.
\end{enumerate}

In this section, we will recall the main properties and embedding results for Musielak-Orlicz Sobolev spaces. To this end, we suppose that $\Omega\subset \R^N$ ($N\geq 2$) is a bounded domain with Lipschitz boundary $\partial\Omega$. For any $r\in [1,\infty)$, we denote by $\Lp{r}=L^r(\Omega;\R)$ and $L^r(\Omega;\R^N)$ the usual Lebesgue spaces with the norm $\|\cdot\|_r$. Moreover, the Sobolev space $\Wpzero{r}$ is equipped with the equivalent norm $\|\nabla \cdot \|_r$ for $1<r<\infty$. Let hypothesis \textnormal{(H1)} be satisfied and consider the nonlinear function $\mathcal{H}\colon \Omega \times [0,\infty)\to [0,\infty)$ defined by
\begin{align*}
	\mathcal H(x,t)= t^p+\mu(x)t^q.
\end{align*}
Denoting by $M(\Omega)$ the space of all measurable functions $u\colon\Omega\to\R$, we can introduce the Musielak-Orlicz Lebesgue space $L^\mathcal{H}(\Omega)$ which is given by
\begin{align*}
	L^\mathcal{H}(\Omega)=\left \{u\in M(\Omega)\,:\,\varrho_{\mathcal{H}}(u)<\infty \right \}
\end{align*}
equipped with the Luxemburg norm
\begin{align*}
	\|u\|_{\mathcal{H}} = \inf \left \{ \tau >0\,:\, \varrho_{\mathcal{H}}\left(\frac{u}{\tau}\right) \leq 1  \right \},
\end{align*}
where the modular function $\varrho_{\mathcal{H}}:L^\mathcal{H}(\Omega)\to\mathbb{R}$ is given by
\begin{align*}%\label{modular2}
	\varrho_{\mathcal{H}}(u):=\int_\Omega \mathcal{H}(x,|u|)\,\diff x=\int_\Omega \big(|u|^{p}+\mu(x)|u|^q\big)\,\diff x.
\end{align*}

The norm $\|\,\cdot\,\|_{\mathcal{H}}$ and the modular function $\varrho_{\mathcal{H}}$ have the following relations, see Liu-Dai \cite[Proposition 2.1]{Liu-Dai-2018} or Crespo-Blanco-Gasi\'nski-Harjulehto-Winkert \cite[Proposition 2.13]{Crespo-Blanco-Gasinski-Harjulehto-Winkert-2021}.

\begin{Proposition}\label{proposition_modular_properties}
	Let \textnormal{(H1)} be satisfied, $u\in L^{\mathcal{H}}(\Omega)$ and $c>0$. Then the following hold:
	\begin{enumerate}
		\item[\textnormal{(i)}]
			If $u\neq 0$, then $\|u\|_{\mathcal{H}}=c$ if and only if $ \varrho_{\mathcal{H}}(\frac{u}{c})=1$.
		\item[\textnormal{(ii)}]
			$\|u\|_{\mathcal{H}}<1$ (resp.\,$>1$, $=1$) if and only if $ \varrho_{\mathcal{H}}(u)<1$ (resp.\,$>1$, $=1$).
		\item[\textnormal{(iii)}]
			If $\|u\|_{\mathcal{H}}<1$, then $\|u\|_{\mathcal{H}}^q\leq \varrho_{\mathcal{H}}(u)\leq\|u\|_{\mathcal{H}}^p$.
		\item[\textnormal{(iv)}]
			If $\|u\|_{\mathcal{H}}>1$, then $\|u\|_{\mathcal{H}}^p\leq \varrho_{\mathcal{H}}(u)\leq\|u\|_{\mathcal{H}}^q$.
		\item[\textnormal{(v)}]
			$\|u\|_{\mathcal{H}}\to 0$ if and only if $ \varrho_{\mathcal{H}}(u)\to 0$.
		\item[\textnormal{(vi)}]
			$\|u\|_{\mathcal{H}}\to \infty$ if and only if $ \varrho_{\mathcal{H}}(u)\to \infty$.
	\end{enumerate}
\end{Proposition}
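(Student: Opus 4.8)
The plan is to reduce all six statements to two elementary ingredients: a two-sided scaling estimate for the modular, and the strict monotonicity of $\tau\mapsto\varrho_{\mathcal H}(u/\tau)$. First I would record the scaling inequalities. Writing $\mathcal H(x,\lambda t)=\lambda^p t^p+\mu(x)\lambda^q t^q$ and using $p<q$, for $0<\lambda\le 1$ one has $\lambda^q\le\lambda^p$, which yields $\lambda^q\mathcal H(x,t)\le\mathcal H(x,\lambda t)\le\lambda^p\mathcal H(x,t)$; for $\lambda\ge 1$ the same comparison with the inequality reversed gives $\lambda^p\mathcal H(x,t)\le\mathcal H(x,\lambda t)\le\lambda^q\mathcal H(x,t)$. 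Integrating over $\Omega$ produces the corresponding bounds $\lambda^q\varrho_{\mathcal H}(v)\le\varrho_{\mathcal H}(\lambda v)\le\lambda^p\varrho_{\mathcal H}(v)$ for $0<\lambda\le 1$ and the reversed version for $\lambda\ge 1$.

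For (i), fix $u\ne 0$ and study $g(\tau):=\varrho_{\mathcal H}(u/\tau)=\tau^{-p}\int_\Omega|u|^p\,\diff x+\tau^{-q}\int_\Omega\mu(x)|u|^q\,\diff x$ on $(0,\infty)$. Since $u\ne 0$, the first integral is strictly positive, so $g$ is continuous and strictly decreasing, with $g(\tau)\to\infty$ as $\tau\to 0^+$ and $g(\tau)\to 0$ as $\tau\to\infty$. Hence there is a unique $\tau_0>0$ with $g(\tau_0)=1$, and the infimum defining $\|u\|_{\mathcal H}$ is attained exactly at $\tau_0$; this is precisely the claim that $\|u\|_{\mathcal H}=c$ holds if and only if $\varrho_{\mathcal H}(u/c)=1$.

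Parts (iii) and (iv) then follow by applying the scaling estimates with $\lambda=c:=\|u\|_{\mathcal H}$ to $w:=u/c$, for which $\varrho_{\mathcal H}(w)=1$ by (i). If $c<1$, the small-$\lambda$ bounds give $c^q\le\varrho_{\mathcal H}(u)\le c^p$, which is (iii); if $c>1$, the large-$\lambda$ bounds give $c^p\le\varrho_{\mathcal H}(u)\le c^q$, which is (iv). Statement (ii) is immediate: the case $=1$ is (i) with $c=1$, while $\|u\|_{\mathcal H}<1$ forces $\varrho_{\mathcal H}(u)\le\|u\|_{\mathcal H}^p<1$ by (iii) and, conversely, $\varrho_{\mathcal H}(u)<1$ rules out $\|u\|_{\mathcal H}\ge 1$ via (i) and (iv); the case $>1$ is symmetric. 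Finally (v) and (vi) are read off from (iii)--(iv) along a sequence: once $\|u_n\|_{\mathcal H}<1$ one has $\|u_n\|_{\mathcal H}^q\le\varrho_{\mathcal H}(u_n)\le\|u_n\|_{\mathcal H}^p$, so norm and modular tend to $0$ together, and in the regime $\|u_n\|_{\mathcal H}>1$ the bounds $\|u_n\|_{\mathcal H}^p\le\varrho_{\mathcal H}(u_n)\le\|u_n\|_{\mathcal H}^q$ show they tend to $\infty$ together.

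The only genuinely delicate point I anticipate is the justification in (i) that the infimum is attained, which rests on the continuity and strict monotonicity of $g$ together with its limiting behaviour at $0$ and $\infty$; this uses that $\int_\Omega\mu(x)|u|^q\,\diff x<\infty$ for $u\in L^{\mathcal H}(\Omega)$, guaranteed by the very definition of the space, so that $g$ is a finite, well-behaved function of $\tau$. Everything after (i) is algebraic bookkeeping with the scaling inequalities.
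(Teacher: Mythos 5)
Your proof is correct. Note that the paper does not prove this proposition at all: it simply cites Liu--Dai \cite[Proposition 2.1]{Liu-Dai-2018} and Crespo-Blanco--Gasi\'nski--Harjulehto--Winkert \cite[Proposition 2.13]{Crespo-Blanco-Gasinski-Harjulehto-Winkert-2021}, and the argument in those references is essentially the one you give — the two-sided homogeneity bounds $\min\{\lambda^p,\lambda^q\}\varrho_{\mathcal H}(v)\leq\varrho_{\mathcal H}(\lambda v)\leq\max\{\lambda^p,\lambda^q\}\varrho_{\mathcal H}(v)$ combined with the continuity and strict monotonicity of $\tau\mapsto\varrho_{\mathcal H}(u/\tau)$, which shows the Luxemburg infimum is attained at the unique unit point of the modular. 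The only cosmetic remark is that in (iii) the case $u=0$ should be dispatched separately (trivially, as both sides vanish), since your derivation of (iii)--(iv) routes through (i), which assumes $u\neq 0$.
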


Furthermore, we define the seminormed space
\begin{align*}
	L^q_\mu(x)(\Omega)=\left \{u\in M(\Omega)\,:\,\int_\Omega \mu(x) |u|^q \,\diff x< \infty \right \}
\end{align*}
endowed with the seminorm
\begin{align*}
	\|u\|_{q,\mu} = \left(\int_\Omega \mu(x) |u|^q \,\diff x \right)^{\frac{1}{q}}.
\end{align*}
While, the corresponding Musielak-Orlicz Sobolev space $W^{1,\mathcal{H}}(\Omega)$ is defined by
\begin{align*}
	W^{1,\mathcal{H}}(\Omega)= \Big \{u \in L^\mathcal{H}(\Omega) \,:\, H(\nabla u) \in L^{\mathcal{H}}(\Omega) \Big\}
\end{align*}
equipped with the norm
\begin{align*}
	\|u\|_{1,\mathcal{H}}= \|H(\nabla u )\|_{\mathcal{H}}+\|u\|_{\mathcal{H}}.
\end{align*}
 Moreover, we denote by $W^{1,\mathcal{H}}_0(\Omega)$ the completion of $C^\infty_0(\Omega)$ in $W^{1,\mathcal{H}}(\Omega)$. From hypothesis \textnormal{(H1)}, we know that we can equip the space $\V$ with the equivalent norm given by
\begin{align*}
	\|u\|=\|H(\nabla u)\|_{\mathcal{H}},
\end{align*}
see Proposition  2.18 of Colasuonno-Squassina \cite{Colasuonno-Squassina-2016}. It is known that $L^\mathcal{H}(\Omega)$, $W^{1,\mathcal{H}}(\Omega)$ and $W^{1,\mathcal{H}}_0(\Omega)$ are uniformly convex and so reflexive Banach spaces, see Colasuonno-Squassina \cite[Proposition 2.14]{Colasuonno-Squassina-2016}.
We end this section by recalling the following embeddings for the spaces $L^\mathcal{H}(\Omega)$ and $W^{1,\mathcal{H}}_0(\Omega)$, see Colasuonno-Squassina \cite[Proposition 2.15]{Colasuonno-Squassina-2016}.

\begin{Proposition}\label{proposition_embeddings}
	Let \textnormal{(H1)} be satisfied. Then the following embeddings hold:
	\begin{enumerate}
		\item[\textnormal{(i)}]
			$\Lp{\mathcal{H}} \hookrightarrow \Lp{r}$ and $\V\hookrightarrow \Wpzero{r}$ are continuous for all $r\in [1,p]$.
		\item[\textnormal{(ii)}]
			$\V \hookrightarrow \Lp{r}$ is continuous for all $r \in [1,p^*]$ and compact for all $r \in [1,p^*)$.
		\item[\textnormal{(iii)}]
			$\Lp{\mathcal{H}} \hookrightarrow L^q_\mu(\Omega)$ is continuous.
		\item[\textnormal{(iv)}]
			$\Lp{q}\hookrightarrow\Lp{\mathcal{H}} $ is continuous.
	\end{enumerate}
\end{Proposition}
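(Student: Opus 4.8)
\emph{Proof proposal.} The plan is to reduce every anisotropic estimate to the isotropic Euclidean case and then to the standard Musielak--Orlicz embeddings of Colasuonno--Squassina. The bridge is the elementary observation that, by \textnormal{(A1)}--\textnormal{(A4)}, the Finsler--Minkowski norm $H$ is equivalent to the Euclidean norm: since $H\in C^\infty(\R^N\setminus\{0\})$ is strictly positive on the compact sphere $S^{N-1}$ (by \textnormal{(A2)}) and positively $1$-homogeneous (by \textnormal{(A4)}), it attains a positive minimum $c_1$ and a finite maximum $c_2$ there, so that $c_1|\zeta|\le H(\zeta)\le c_2|\zeta|$ for all $\zeta\in\R^N$. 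Because the Luxemburg norm $\|\cdot\|_{\mathcal H}$ is monotone and absolutely homogeneous, this immediately gives $c_1\,\||\nabla u|\|_{\mathcal H}\le\|H(\nabla u)\|_{\mathcal H}\le c_2\,\||\nabla u|\|_{\mathcal H}$, so the anisotropic norm $\|u\|=\|H(\nabla u)\|_{\mathcal H}$ is equivalent to the Euclidean Musielak--Orlicz Sobolev norm. Hence $\V$ coincides as a set, with an equivalent norm, with the space treated in the isotropic references, and it suffices to argue for the latter.

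First I would treat \textnormal{(iii)}, \textnormal{(iv)} and the Lebesgue half of \textnormal{(i)} by pointwise modular comparison. Since $\mu\ge 0$, we have $|t|^p\le\mathcal H(x,t)$ and $\mu(x)|t|^q\le\mathcal H(x,t)$, whence $\|u\|_p^p\le\varrho_{\mathcal H}(u)$ and $\|u\|_{q,\mu}^q\le\varrho_{\mathcal H}(u)$. Conversely, as $\mu\in L^\infty(\Om)$ and $\Om$ has finite measure, the bound $\mathcal H(x,t)\le|t|^p+\|\mu\|_\infty|t|^q$ together with $\Lp{q}\hookrightarrow\Lp{p}$ controls $\varrho_{\mathcal H}(u)$ in terms of $\|u\|_q$, which yields \textnormal{(iv)}. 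Each modular inequality is then converted into the corresponding norm inequality through Proposition \ref{proposition_modular_properties}, distinguishing the regimes $\|u\|_{\mathcal H}\lessgtr 1$ as in parts \textnormal{(iii)}--\textnormal{(iv)} there. The full range $r\in[1,p]$ in \textnormal{(i)} then follows from $\Lp{p}\hookrightarrow\Lp{r}$, valid because $\Om$ is bounded.

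For the Sobolev statements I would apply the Lebesgue half of \textnormal{(i)} to $\nabla u$ via the equivalence $H(\nabla u)\asymp|\nabla u|$: this shows $\nabla u\in L^p(\Om;\R^N)$ with a continuous bound, hence $\V\hookrightarrow\Wpzero{p}$ continuously. Part \textnormal{(ii)} is then inherited from the classical theory on a bounded Lipschitz domain: the Sobolev embedding $\Wpzero{p}\hookrightarrow\Lp{p^*}$ is continuous, and Rellich--Kondrachov gives compactness of $\Wpzero{p}\hookrightarrow\Lp{r}$ for every $r\in[1,p^*)$, with only continuity surviving at the endpoint $r=p^*$.

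The delicate points here are bookkeeping rather than conceptual. The first is that the Luxemburg norm is not homogeneous in the modular, so modular inequalities do not pass verbatim to norm inequalities; the unit-ball dichotomy of Proposition \ref{proposition_modular_properties} must be invoked carefully to produce clean constants in both regimes $\|u\|_{\mathcal H}<1$ and $\|u\|_{\mathcal H}>1$. The second, and the only genuinely non-elementary ingredient, is the compactness in \textnormal{(ii)}, which rests entirely on the reduction to $\Wpzero{p}$ being an honest continuous embedding; this is precisely what the norm equivalence $c_1|\zeta|\le H(\zeta)\le c_2|\zeta|$ secures.
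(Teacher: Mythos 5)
Your proposal is correct, but it is worth noting that the paper offers no proof of this proposition at all: it is quoted verbatim from Colasuonno--Squassina \cite[Proposition 2.15]{Colasuonno-Squassina-2016}, so the comparison here is between your self-contained argument and an external citation. Your route --- first establishing $c_1|\zeta|\le H(\zeta)\le c_2|\zeta|$ from the compactness of $S^{N-1}$, the positivity (A2), the continuity (A3) and the homogeneity (A4), then transporting this through the monotonicity and absolute homogeneity of the Luxemburg norm to get $\|H(\nabla u)\|_{\mathcal H}\asymp\||\nabla u|\|_{\mathcal H}$ --- is exactly the bridge that the paper leaves implicit when it invokes an isotropic reference for an anisotropic space, so making it explicit is a genuine gain. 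The remaining steps are sound: the modular comparisons $|t|^p\le\mathcal H(x,t)$ and $\mu(x)|t|^q\le\mathcal H(x,t)$ give (iii) and the Lebesgue half of (i) (in fact, since $\varrho_{\mathcal H}(u/\tau)\le 1$ forces $\int_\Om|u/\tau|^p\,dx\le 1$, one gets $\|u\|_p\le\|u\|_{\mathcal H}$ directly from the definition of the Luxemburg norm, without even needing the unit-ball dichotomy of Proposition \ref{proposition_modular_properties}); the bound $\mathcal H(x,t)\le|t|^p+\|\mu\|_\infty|t|^q$ together with $\Lp{q}\hookrightarrow\Lp{p}$ on the bounded domain gives (iv) after normalising $u/\|u\|_q$; and (ii) reduces to the classical Sobolev and Rellich--Kondrachov theorems once $\V\hookrightarrow\Wpzero{p}$ is secured. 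The one point you should state rather than leave implicit is that $\|\nabla u\|_p\le c_1^{-1}\|H(\nabla u)\|_{\mathcal H}$ controls membership in $\Wpzero{p}$ by density of $C_0^\infty(\Om)$ in $\V$ together with the Poincar\'e-equivalent norm $\|\nabla\cdot\|_p$; with that said, the argument is complete.
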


\begin{Remark}
	Since $q<p^*$ by hypothesis \textnormal{(H1)}, we know from Proposition \ref{proposition_embeddings}\,\textnormal{(ii)} that $\V \hookrightarrow L^q(\Omega)$ is compact.
\end{Remark}

\section{Preliminaries}
\subsection{Nehari set}
We prove existence of weak solutions of \eqref{maineqn} by variational methods. For $\lambda>0$, we define the energy functional $I_{\lambda}: W_{0}^{1,\mathcal{H}}(\Om)\to\R$ associated with the problem \eqref{maineqn} by 
\begin{equation}\label{engfnl}
\begin{split}
I_{\lambda}(u)&=\int_{\Om}\Big(\frac{H(\nabla u)^p}{p}+\mu(x)\frac{H(\nabla u)^q}{q}\Big)\,dx-\frac{1}{1-\delta}\int_{\Om}a(x)|u|^{1-\delta}\,dx\\
&\quad-\frac{\lambda}{\alpha+1}\int_{\Om}b(x)|u|^{\alpha+1}\,dx.
\end{split}
\end{equation}
Our approach is based on the Nehari manifold technique. To this end, we define the following subsets of $W_0^{1,\mathcal{H}}(\Om)$ as follows:
\begin{equation}\label{Nset}
M_{\lambda}=\{u\in W_0^{1,\mathcal{H}}(\Om)\setminus\{0\}:\langle I_\la^\prime(u),u\rangle=0\}.
\end{equation}
For $t>0$ and $u\in W_0^{1,\mathcal{H}}(\Om)$, we define the fibering map $\phi_u:(0,\infty)\to\R$ by $\phi_u(t)=I_{\lambda}(tu)$. It is easy to deduce the following:
\begin{align*}
    \phi_u(t) &= \int_{\Om}\Big(\frac{t^pH(\nabla u)^p}{p}+\mu(x)\frac{t^qH(\nabla u)^q}{q}\Big)\,dx-\frac{t^{1-\delta}}{1-\delta}\int_{\Om}a(x)|u|^{1-\delta}\,dx\\
    &\quad -\frac{\lambda t^{\alpha+1}}{\alpha+1}\int_{\Om}b(x)|u|^{\alpha+1}\,dx.\\
    \phi_u^\prime(t) &= \int_{\Om}\Big({t^{p-1}H(\nabla u)^p}+\mu(x){t^{q-1}H(\nabla u)^q}\Big)\,dx-{t^{-\delta}}\int_{\Om}a(x)|u|^{1-\delta}\,dx\\
    &\quad -{\lambda t^{\alpha}}\int_{\Om}b(x)|u|^{\alpha+1}\,dx.\\
    \phi_u^{\prime\prime}(t) &= \int_{\Om}\Big({(p-1)t^{p-2}H(\nabla u)^p}+\mu(x){(q-1)t^{q-2}H(\nabla u)^q}\Big)\,dx+\delta{t^{-\delta-1}}\int_{\Om}a(x)|u|^{1-\delta}\,dx\\
    &\quad -{\lambda\alpha
    t^{\alpha-1}}\int_{\Om}b(x)|u|^{\alpha+1}\,dx.
\end{align*}

We split the set $M_{\la}$ as $M_\lambda=M_{\lambda}^{0}\cup M_{\lambda}^{+}\cup M_{\lambda}^{-}$, where
\begin{equation}\label{Nset0}
M_{\lambda}^{0}=\{u\in M_{\lambda}:\phi_u^{''}(1)=0\},
\end{equation}
\begin{equation}\label{Nset+}
M_{\lambda}^{+}=\{u\in M_{\lambda}:\phi_u^{''}(1)>0\}
\end{equation}
and
\begin{equation}\label{Nset-}
M_{\lambda}^{-}=\{u\in M_{\lambda}:\phi_u^{''}(1)<0\}.
\end{equation}

{Our first lemma asserts the coercivity and existence of infimum of $I_{\lambda}$ defined by \eqref{engfnl} over the Nehari set $M_\lambda$.
\begin{Lemma}\label{cbb}
The functional $I_\lambda$ is coercive and bounded below over $M_{\lambda}$.
\end{Lemma}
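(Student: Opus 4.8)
The plan is to exploit the Nehari constraint to eliminate the troublesome sign-changing term and then read off both properties from the resulting expression. For $u\in M_\la$ the condition $\langle I_\la'(u),u\rangle=\phi_u'(1)=0$ reads
$$
\int_\Om\big(H(\nabla u)^p+\mu(x)H(\nabla u)^q\big)\,dx=\int_\Om a(x)|u|^{1-\delta}\,dx+\la\int_\Om b(x)|u|^{\alpha+1}\,dx .
$$
The key first step is to substitute this identity into \eqref{engfnl} so as to remove the term $\frac{\la}{\alpha+1}\int_\Om b|u|^{\alpha+1}$. Writing $P=\int_\Om H(\nabla u)^p\,dx$, $Q=\int_\Om\mu(x)H(\nabla u)^q\,dx$ and $A=\int_\Om a(x)|u|^{1-\delta}\,dx$ and using $\la\int_\Om b|u|^{\alpha+1}=P+Q-A$, a short computation gives
$$
I_\la(u)=\Big(\tfrac1p-\tfrac1{\alpha+1}\Big)P+\Big(\tfrac1q-\tfrac1{\alpha+1}\Big)Q-\Big(\tfrac1{1-\delta}-\tfrac1{\alpha+1}\Big)A .
$$
By hypothesis \textnormal{(H1)} we have $p<q<\alpha+1$ and $0<1-\delta<1<\alpha+1$, so all three bracketed coefficients are strictly positive. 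This is the structurally decisive point: the two gradient quantities $P,Q\ge 0$ enter with positive weight, and only the lower-order singular term is subtracted.

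Next I would control the singular integral. Since $a\in L^\infty(\Om)$ and $1-\delta<p$, Hölder's inequality yields $A\le\|a\|_\infty|\Om|^{1-(1-\delta)/p}\|u\|_p^{1-\delta}$, and the continuous embedding $\V\hookrightarrow\Lp{p}$ from Proposition \ref{proposition_embeddings} gives a constant $C>0$ with $A\le C\|u\|^{1-\delta}$. Setting $c_0=\min\{\tfrac1p-\tfrac1{\alpha+1},\,\tfrac1q-\tfrac1{\alpha+1}\}>0$ and noting that $P+Q=\varrho_{\mathcal H}(H(\nabla u))$, I obtain
$$
I_\la(u)\ge c_0\,\varrho_{\mathcal H}(H(\nabla u))-C'\|u\|^{1-\delta}
$$
for a constant $C'>0$.

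Finally I would pass from the modular to the norm $\|u\|=\|H(\nabla u)\|_{\mathcal H}$ via Proposition \ref{proposition_modular_properties}. For $\|u\|>1$, part \textnormal{(iv)} gives $\varrho_{\mathcal H}(H(\nabla u))\ge\|u\|^p$, whence $I_\la(u)\ge c_0\|u\|^p-C'\|u\|^{1-\delta}$; because $p>1-\delta$, the right-hand side tends to $+\infty$ as $\|u\|\to\infty$, which is coercivity. Boundedness below then follows by a case split: on $\{\|u\|>1\}$ the displayed lower bound is a continuous function of $\|u\|$ diverging to $+\infty$, hence bounded below, while on $\{0<\|u\|\le1\}$ one has $A\le C\|u\|^{1-\delta}\le C$, so $I_\la(u)\ge-C'$.

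I do not expect a genuine obstacle, as the argument is essentially algebraic once the Nehari identity is inserted. The only point requiring care is that the quantity produced naturally by the computation is the modular $\varrho_{\mathcal H}$ rather than $\|\cdot\|$, so the conversion must go through the modular–norm inequalities of Proposition \ref{proposition_modular_properties} together with the distinction $\|u\|\gtrless1$; this is also precisely where the condition $q<\alpha+1$ in \textnormal{(H1)} is used, since it guarantees positivity of the coefficients of $P$ and $Q$.
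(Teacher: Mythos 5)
Your proof is correct and follows essentially the same route as the paper: insert the Nehari identity $\langle I_\la'(u),u\rangle=0$ to eliminate the $b$-term, bound $\int_\Om a(x)|u|^{1-\delta}\,dx\le C\|u\|^{1-\delta}$ via the embeddings, and convert the modular to the norm through Proposition \ref{proposition_modular_properties} for $\|u\|>1$. The only minor caveat is that your H\"older bound for $A$ invokes $a\in L^\infty(\Om)$, which is the subcritical hypothesis \textnormal{(H2)}; since the lemma is also used in the critical case one should note that under \textnormal{(H3)} the same estimate $A\le |a|_{\frac{p^*}{p^*-1+\delta}}|u|_{p^*}^{1-\delta}\le C\|u\|^{1-\delta}$ follows from the embedding into $L^{p^*}(\Om)$, which is exactly what the paper's appeal to Proposition \ref{proposition_embeddings}\,\textnormal{(ii)} covers.
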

\begin{proof}
Since $u\in M_{\lambda}$, we have
\begin{align*}
I_{\lambda}(u)&=\Big(\frac{1}{p}-\frac{1}{\alpha+1}\Big)\int_{\Om}H(\nabla u)^p\,dx+\Big(\frac{1}{q}-\frac{1}{\alpha+1}\Big)\int_{\Om}H(\nabla u)^q\,dx\\
&\qquad-\Big(\frac{1}{1-\delta}-\frac{1}{\alpha+1}\Big)\int_{\Om}a(x)|u|^{1-\delta}\,dx.
\end{align*}
By Proposition \ref{proposition_embeddings}-(ii), we have $\int_{\Om}a(x)|u|^{1-\delta}\,dx\leq c\|u\|^{1-\delta}$ for some constant $c>0$. Therefore, since $p<q$, we have
\begin{align*}
I_{\lambda}(u)&\geq\Big(\frac{1}{q}-\frac{1}{\alpha+1}\Big)\int_{\Om}(H(\nabla u)^p\,dx+H(\nabla u)^q)\,dx-c\Big(\frac{1}{1-\delta}-\frac{1}{\alpha+1}\Big)\|u\|^{1-\delta}\\
&=\Big(\frac{1}{q}-\frac{1}{\alpha+1}\Big)\rho_{\mathcal{H}(\nabla u)}-c\Big(\frac{1}{1-\delta}-\frac{1}{\alpha+1}\Big)\|u\|^{1-\delta}.
\end{align*}
If $\|u\|>1$, by Proposition \ref{proposition_modular_properties}, we have $\rho_{\mathcal{H}}(\nabla u)\geq\|u\|^{p}$. Using this property in the above inequality, the result follows.
\end{proof}

Next, we prove that for small values of $\lambda$, the set $M_{\lambda}^{0}$ is empty. 

\begin{Lemma}\label{Nest0emp}
There exists $\lambda_0>0$ such that $M_\lambda^{0}=\emptyset$, for every $\lambda\in(0,\lambda_0)$.
\end{Lemma}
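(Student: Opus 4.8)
The plan is to argue by contradiction, producing two incompatible bounds on $\|u\|$ once $\lambda$ is small. Suppose $u\in M_\lambda^0$; then by definition $\phi_u'(1)=0$ and $\phi_u''(1)=0$, which, inserting $t=1$ into the formulas for $\phi_u'$ and $\phi_u''$, read
\begin{align*}
A+B-C-\lambda D&=0,\\
(p-1)A+(q-1)B+\delta C-\lambda\alpha D&=0,
\end{align*}
where I abbreviate $A=\int_{\Om}H(\nabla u)^p\,dx$, $B=\int_{\Om}\mu(x)H(\nabla u)^q\,dx$, $C=\int_{\Om}a(x)|u|^{1-\delta}\,dx$ and $D=\int_{\Om}b(x)|u|^{\alpha+1}\,dx$, so that $A+B=\varrho_{\mathcal H}(H(\nabla u))$.

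First I would eliminate $D$ by subtracting $\alpha$ times the first equation from the second, which gives
\begin{equation*}
(\alpha+1-p)A+(\alpha+1-q)B=(\alpha+\delta)C.
\end{equation*}
Since $p<q<\alpha+1$ by \textnormal{(H1)}, both coefficients on the left are positive, so the left side dominates $(\alpha+1-q)(A+B)$. Combining this with the estimate $C\le c_1\|u\|^{1-\delta}$ (from $a\in L^\infty(\Om)$, Hölder's inequality, and the embedding $\V\hookrightarrow L^{p^*}(\Om)$ of Proposition \ref{proposition_embeddings}(ii)) yields an upper bound $\varrho_{\mathcal H}(H(\nabla u))\le c\,\|u\|^{1-\delta}$ that is \emph{independent of $\lambda$}. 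In the regime $\|u\|\ge 1$, Proposition \ref{proposition_modular_properties}(iv) gives $\|u\|^p\le\varrho_{\mathcal H}(H(\nabla u))$, so this forces $\|u\|^{p-1+\delta}\le c$, i.e.\ a uniform upper bound on $\|u\|$.

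Next I would eliminate $C$ by adding $\delta$ times the first equation to the second, obtaining
\begin{equation*}
(p-1+\delta)A+(q-1+\delta)B=\lambda(\alpha+\delta)D.
\end{equation*}
The left side is at least $(p-1+\delta)\,\varrho_{\mathcal H}(H(\nabla u))$, while the right side is controlled by $\lambda(\alpha+\delta)|D|\le c_2\lambda\|u\|^{\alpha+1}$, using Proposition \ref{proposition_embeddings}(ii) together with Hölder's inequality and $b\in L^{p^*/(p^*-\alpha-1)}(\Om)$. Dividing by the appropriate power of $\|u\|$ and using Proposition \ref{proposition_modular_properties}(iii)--(iv) produces $\|u\|^{\alpha+1-p}\ge c/\lambda$ when $\|u\|\ge1$, and $\|u\|^{\alpha+1-q}\ge c/\lambda$ when $\|u\|\le1$; since $\alpha+1>q>p$, in either case the resulting lower bound on $\|u\|$ tends to $+\infty$ as $\lambda\to0$.

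Finally I would combine the two estimates to fix $\lambda_0$. When $\|u\|\le1$, the lower bound alone exceeds $1$ once $\lambda$ is small, contradicting $\|u\|\le1$; when $\|u\|\ge1$, the $\lambda$-independent upper bound from the first relation and the blow-up lower bound from the second relation are incompatible for small $\lambda$. Choosing $\lambda_0>0$ so small that the lower bound strictly exceeds the upper bound (and exceeds $1$) for all $\lambda\in(0,\lambda_0)$ yields the contradiction, hence $M_\lambda^0=\emptyset$. The main obstacle is exactly the non-homogeneity of the double-phase structure: unlike the classical $p$-Laplacian case, $\varrho_{\mathcal H}$ is comparable to $\|u\|^p$ or to $\|u\|^q$ according to whether $\|u\|\le1$ or $\|u\|\ge1$, so the bounds cannot be obtained by a single scaling and must be tracked through both regimes; a secondary point is that, because $b$ changes sign, only the unsigned estimate $|D|\le c\|u\|^{\alpha+1}$ is available.
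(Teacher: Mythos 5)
Your proof is correct and follows essentially the same route as the paper: the same two linear combinations of $\phi_u'(1)=0$ and $\phi_u''(1)=0$ (the paper's equations \eqref{p5} and \eqref{p3}), yielding a $\lambda$-independent upper bound on $\|u\|$ from the $a$-term and a lower bound blowing up as $\lambda\to0$ from the $b$-term, with the two modular regimes $\|u\|\lessgtr1$ handled exactly as you describe. You have merely written out the estimates that the paper delegates to \cite[Proposition 3.3]{win}, and you correctly state the lower bound with "$\geq$" where the paper's display \eqref{p7} contains an evident sign typo.
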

\begin{proof}
Suppose for every $\lambda^*>0$, there exists $\la\in(0,\la^*)$ such that $M_{\lambda}^{0}\neq\emptyset$. Hence for any given $\la>0$, let $u\in M_{\lambda}^{0}$. Then we have $\phi_u^{'}(1)=0$ and $\phi_u^{''}(1)=0$, which gives
\begin{equation}\label{p1}
\int_{\Om}(H(\nabla u)^p+\mu(x)H(\nabla u)^q)\,dx=\int_{\Om}a(x)|u|^{1-\delta}\,dx+\lambda\int_{\Om}b(x)|u|^{\alpha+1}\,dx
\end{equation}
and
\begin{equation}\label{p2}
\int_{\Om}((p-1)H(\nabla u)^p+(q-1)\mu(x)H(\nabla u)^q)\,dx+\delta\int_{\Om}a(x)|u|^{1-\delta}\,dx=\lambda\alpha\int_{\Om}b(x)|u|^{\alpha+1}\,dx
\end{equation}
respectively. Putting the value of $\int_{\Om}a(x)|u|^{1-\delta}\,dx$ from \eqref{p1} in \eqref{p2}, we obtain
\begin{equation}\label{p3}
(p+\delta-1)\int_{\Om}H(\nabla u)^p\,dx+(q+\delta-1)\int_{\Om}\mu(x)H(\nabla u)^q\,dx=\lambda(\alpha+\delta)\int_{\Om}b(x)|u|^{\alpha+1}\,dx.
\end{equation}
Multiplying \eqref{p1} by $(\alpha+\delta)$, we get
\begin{equation}\label{p4}
(\alpha+\delta)\int_{\Om}(H(\nabla u)^p+\mu(x)H(\nabla u)^q)\,dx=(\alpha+\delta)\int_{\Om}a(x)|u|^{1-\delta}\,dx+\lambda(\alpha+\delta)\int_{\Om}b(x)|u|^{\alpha+1}\,dx.
\end{equation}
Subtracting \eqref{p3} and \eqref{p4}, we get
\begin{equation}\label{p5}
(\alpha+1-p)\int_{\Om}H(\nabla u)^p\,dx+(\alpha+1-q)\int_{\Om}\mu(x)H(\nabla u)^q\,dx=(\alpha+\delta)\int_{\Om}a(x)|u|^{1-\delta}\,dx.
\end{equation}
As in the proof of \cite[Proposition $3.3$]{win} from \eqref{p5} we have
\begin{equation}\label{p6}
\|u\|\leq c_1,
\end{equation}
for some constant $c_1>0$. Moreover,  as in the proof of \cite[Proposition $3.3$]{win}, from \eqref{p3} we get
\begin{equation}\label{p7}
\|u\|\leq\left(\frac{1}{\lambda c_2}\right)^\frac{1}{\alpha+1-p}\quad\text{ or }\quad\|u\|\leq\left(\frac{1}{\lambda c_2}\right)^\frac{1}{\alpha+1-q}.
\end{equation}
Letting $\lambda\to 0$ in \eqref{p7}, we get $\|u\|\to\infty$, which contradicts \eqref{p6}. Hence the proof.
\end{proof} }

For notational convenience, we set
\[B := \int_{\Om}b(x)|u|^{\alpha+1}\,dx\;\text{ and }\; A:=\int_{\Om}a(x)|u|^{1-\delta}\,dx.\]
Defining 
\[\sigma_u(t)= \int_\Om \Big({t^{p-1-\alpha}H(\nabla u)^p}+\mu(x){t^{q-1-\alpha}H(\nabla u)^q}\Big)\,dx- \textcolor{red}{t^{-\alpha-\delta}}A, \]
we see that {$\phi_u^\prime (t)= t^\alpha (\sigma_u(t) -\la B)$.} Thus 
\[tu \in M_\la \;\text{if and only if }\; \sigma_u(t) =\la B.\]
Because $q-1-\alpha>p-1-\alpha >-\delta-\alpha$, we assert that 
\[\lim_{t\to 0^+}\sigma_u(t)=-\infty, \; \lim_{t\to +\infty}\sigma_u(t) = 0, \; \sigma_u(t)>0\;\text{for suitably large} \; t>0.\]
Consider 
\begin{align*}
   &0=\sigma_u^\prime(t) =(p-1-\alpha)t^{p-2-\alpha}\int_\Om H(\nabla u)^p~dx+(q-1-\alpha)t^{q-2-\alpha}\int_\Om \mu(x)H(\nabla u)^q~dx \\
   &\quad \quad+ (\alpha+\delta)t^{-\delta-\alpha-1}A\\
   &\implies E_u(t):= (-p+1+\alpha)t^{p-1+\delta}\int_\Om H(\nabla u)^p~dx +(-q+1+\alpha)t^{q-1+\delta}\int_\Om \mu(x)H(\nabla u)^q~dx= (\alpha+\delta)A
\end{align*}
Keeping into account $0<\delta<1<p<q<\alpha+1$, we observe the following about $E_u$:
\begin{enumerate}
    \item[(i)] $E_u^\prime(t)>0,$ for all $t>0$.
    \item[(ii)] $\lim\limits_{t\to 0^+}E_u(t)=0$.
    \item[(iii)] $ \lim\limits_{t\to \infty} E_u(t)=+\infty$.
\end{enumerate}
From $(ii)$ and $(iii)$, we know that there exists a $t_{\max}>0$ such that $E_u(t_{\max})= (\alpha+\delta)A$. From $(i)$, we agree that this $t_{\max}>0$ must be unique since $E_u$ is an injective map.
Hence this gives that there exists a unique $t_{\max}>0$ such that
\[\sigma_u^\prime(t_{\max})=0 \;\text{and}\; \sigma_u \;\text{increases in} \; (0,t_{\max}).\]
Since for all $t>0$, define another map 
\[\tilde{\sigma}_u(t)= t^{p-1-\alpha}\int_\Om H(\nabla u)^p~dx - t^{-\alpha-\delta}A\]
then 
\begin{align*}
   \tilde{\sigma}_u^\prime(t)  =(p-1-\alpha)t^{p-2-\alpha}\int_\Om H(\nabla u)^p + (\alpha+\delta)t^{-\delta-\alpha-1}A
\end{align*}
which implies $\max\limits_{t>0}\tilde{\sigma}_u(t)= \tilde{\sigma}_u(t_0)$, where
\[t_0:= \left(\frac{(\alpha+\delta)A}{(\alpha-p+1)\int_\Om H(\nabla u)^p~dx}\right)^{\frac{1}{p-1+\delta}}.\]
Clearly $\sigma_u(t)\geq \tilde{\sigma}_u(t)$, therefore we get 
\begin{align*}
    \sigma_u(t_{\max})&\geq \sigma_u(t_0)\geq \tilde{\sigma}_u(t_0)= t_0^{p-1-\alpha}\int_\Om H(\nabla u)^p -t_0^{-\delta-\alpha}A\\
    & = \left(\frac{\alpha-p+1}{\alpha+\delta}\right)^{\frac{\alpha+\delta}{p-1+\delta}}\left(\frac{\delta+p-1}{\alpha-p+1}\right) \frac{\left(\int_\Om H(\nabla u)^p~dx\right)^{\frac{\alpha+\delta}{p-1+\delta}}}{A^{\frac{\alpha+1-p}{p-1+\delta}}}.
\end{align*}
Also we have the following-
\begin{align*}
    A &\leq \left(\int_\Om |a(x)|^{\frac{p^*}{p^*-1+\delta}}~dx\right)^{\frac{p^*+\delta-1}{p^*}} \left(\int_\Om |u|^{p^*}~dx\right)^{\frac{1-\delta}{p^*}}= |a|_{\frac{p^*}{p^*-1+\delta}}|u|_{p^*}^{1-\delta}\\
    B &\leq \left(\int_\Om |b(x)|^{\frac{p^*}{p^*-\alpha-1}}~dx\right)^{\frac{p^*-\alpha-1}{p^*}} \left(\int_\Om |u|^{p^*}~dx\right)^{\frac{\alpha+1}{p^*}}= |b|_{\frac{p^*}{p^*-\alpha-1}}|u|_{p^*}^{\alpha+1}
\end{align*}
Using all these, we obtain that 
\begin{align*}
    \sigma_u(t_{\max})-\la B & \geq \left(\frac{\alpha-p+1}{\alpha+\delta}\right)^{\frac{\alpha+\delta}{p-1+\delta}}\left(\frac{\delta+p-1}{\alpha-p+1}\right) \frac{\left(\int_\Om H(\nabla u)^p~dx\right)^{\frac{\alpha+\delta}{p-1+\delta}}}{A^{\frac{\alpha+1-p}{p-1+\delta}}} - \la B\\
    & \geq \left(\frac{\alpha-p+1}{\alpha+\delta}\right)^{\frac{\alpha+\delta}{p-1+\delta}}\left(\frac{\delta+p-1}{\alpha-p+1}\right)  \frac{\left(cS|u|_{p^*}^p\right)^{\frac{\alpha+\delta}{p-1+\delta}}}{\left( |a|_{\frac{p^*}{p^*-1+\delta}}|u|_{p^*}^{1-\delta} \right)^{\frac{\alpha+1-p}{p-1+\delta}}}-\la |b|_{\frac{p^*}{p^*-\alpha-1}}|u|_{p^*}^{\alpha+1}\\
    &= (D_1-\la D_2)|u|_{p^*}^{\alpha+1}
\end{align*}
where $D_1,D_2>0$ are constants independent of $u$. Clearly, this asserts that there exists a $\la_*>0$ such that  
\begin{equation}\label{fm-1}
    \sigma_u(t_{\max})- \la B>0,\;\; \forall \la \in (0,\la_*).
\end{equation}

\begin{Lemma}\label{t-1,t-2}
Let $\la \in (0,\la_*)$ then for each $u\in  W_0^{1,\mathcal{H}}(\Om)$, the following holds-
\begin{enumerate}
    \item if $B< 0$ then there exists a unique $t_1<t_{\max}$ such that $t_1u\in M_\la^+$ and $I_\la(t_1u)= \inf_{t>0}I_\la(tu)$;
    \item if $B\geq 0$ then there exist unique $t_1,t_2$ satisfying $t_1<t_{\max}<t_2$ such that $t_1u\in M_\la^+$ and $t_2u\in M_\la^-$. 
    %Moreover
    %\[I_\la(t_1u) = \inf_{0 \leq t\leq t_{\max}} I_\la(tu),\; I_\la(t_2u) = \sup_{t\geq t_1}I_\la(tu).\]
\end{enumerate}
\end{Lemma}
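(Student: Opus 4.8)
The plan is to extract everything from the already-established shape of $\sigma_u$ together with a scaling identity and the link between $\phi_u''$ and $\sigma_u'$. First I would reduce membership in $M_\la^{\pm}$ to a sign condition on $\phi_u''$. Since $\phi_{tu}(s)=I_\la(s\,tu)=\phi_u(ts)$, differentiating twice gives $\phi_{tu}''(1)=t^2\phi_u''(t)$, so $t u\in M_\la^{\pm}$ is governed by the sign of $\phi_u''(t)$. Next, differentiating the relation $\phi_u'(t)=t^\alpha(\sigma_u(t)-\la B)$ yields $\phi_u''(t)=\alpha t^{\alpha-1}(\sigma_u(t)-\la B)+t^\alpha\sigma_u'(t)$, and at any $t$ with $tu\in M_\la$ (that is, $\sigma_u(t)=\la B$) the first term vanishes, leaving $\phi_u''(t)=t^\alpha\sigma_u'(t)$. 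As $t>0$, this means $tu\in M_\la^{+}\iff\sigma_u'(t)>0$ and $tu\in M_\la^{-}\iff\sigma_u'(t)<0$. Combined with the fact that $\sigma_u$ is strictly increasing on $(0,t_{\max})$ and (since $t_{\max}$ is its unique critical point with $\sigma_u(t_{\max})>0$ and $\sigma_u(t)\to 0$) strictly decreasing on $(t_{\max},\infty)$, every admissible $t<t_{\max}$ lands in $M_\la^{+}$ and every admissible $t>t_{\max}$ lands in $M_\la^{-}$.

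With this dictionary both parts reduce to counting solutions of $\sigma_u(t)=\la B$ by the intermediate value theorem, using that $\sigma_u$ rises from $-\infty$ to $\sigma_u(t_{\max})$ on $(0,t_{\max})$ and falls back to $0^{+}$ on $(t_{\max},\infty)$. For part (1), $B<0$ forces $\la B<0$; since $\sigma_u>0$ on $[t_{\max},\infty)$, the level $\la B$ is met exactly once, at a unique $t_1\in(0,t_{\max})$ where $\sigma_u$ increases, so $t_1u\in M_\la^{+}$. To get the minimization statement I would read off the sign of $\phi_u'(t)=t^\alpha(\sigma_u(t)-\la B)$: it is negative on $(0,t_1)$ where $\sigma_u(t)<\la B$ and positive on $(t_1,\infty)$ where $\sigma_u(t)>\la B$ (the latter holding on the whole ray precisely because $\sigma_u$ stays above $0>\la B$ past $t_{\max}$). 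Hence $\phi_u$ decreases then increases, so $t_1$ is its global minimizer and $I_\la(t_1u)=\inf_{t>0}I_\la(tu)$.

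For part (2), $B>0$ gives $\la B>0$, and the decisive input is the estimate \eqref{fm-1}, which guarantees $0<\la B<\sigma_u(t_{\max})$ for $\la\in(0,\la_*)$. Thus the level $\la B$ lies strictly between the side limits and the maximum, so by strict monotonicity it is attained exactly once on each side: once at $t_1\in(0,t_{\max})$ with $\sigma_u'(t_1)>0$, hence $t_1u\in M_\la^{+}$, and once at $t_2\in(t_{\max},\infty)$ with $\sigma_u'(t_2)<0$, hence $t_2u\in M_\la^{-}$, with $t_1<t_{\max}<t_2$. Uniqueness on each interval is automatic from strict monotonicity of $\sigma_u$ there.

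I would flag two places that need care rather than genuine obstacles. The first is the asymptotics $\sigma_u(t)\to 0^{+}$ (not merely $\to 0$) as $t\to\infty$: it is the positivity of the slowest-decaying term $t^{p-1-\alpha}\int_\Om H(\nabla u)^p\,dx$ that both excludes a second intersection when $B<0$ and compels the second intersection to exist when $B>0$, so the \emph{sign} of this limit is doing real work and should be recorded explicitly. The second is the boundary value $B=0$, where $\la B=0$ coincides with the limiting value of $\sigma_u$ at infinity; then $\sigma_u(t)=0$ has the single root $t_1\in(0,t_{\max})$ in $M_\la^{+}$ (the prospective $t_2$ escapes to $+\infty$), and the correct conclusion is the single global minimizer of part (1) rather than a genuine pair. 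Accordingly I would prove (2) for $B>0$ and subsume $B=0$ into the single-point, global-minimization analysis of (1).
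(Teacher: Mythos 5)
Your argument follows essentially the same route as the paper: both reduce membership in $M_\la^{\pm}$ to the sign of $\sigma_u'$ at the intersection points of $\sigma_u$ with the level $\la B$ (the paper via the identity $\sigma_u'(t)=t^{-\alpha}\phi_u''(t)-\alpha t^{-\alpha-1}\phi_u'(t)$, you via the equivalent expansion of $\phi_u''$ from $\phi_u'(t)=t^\alpha(\sigma_u(t)-\la B)$ together with the scaling relation $\phi_{tu}''(1)=t^2\phi_u''(t)$, which the paper leaves implicit), and both then count intersections using the shape of $\sigma_u$ and the estimate \eqref{fm-1}. Your write-up is the more careful of the two, since the paper appeals to ``the possible graph of $\sigma_u$'' where you actually verify the monotonicity on each side of $t_{\max}$ and the sign of the limit at infinity.

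Your flag on the boundary case $B=0$ is correct and identifies a genuine defect in the statement that the paper's proof does not address: since $\sigma_u$ is strictly decreasing on $(t_{\max},\infty)$ with $\lim_{t\to\infty}\sigma_u(t)=0^{+}$, one has $\sigma_u(t)>0$ for every $t>t_{\max}$, so the equation $\sigma_u(t)=\la B=0$ has no root beyond $t_{\max}$ and the asserted $t_2$ does not exist when $B=0$. Your proposed repair --- prove part (2) only for $B>0$ and subsume $B=0$ into the single-root, global-minimizer analysis of part (1) --- is the right fix and is harmless for the rest of the paper, where part (2) is only invoked to produce elements of $M_\la^{-}$ (for which one may restrict to functions with $B>0$, available by hypothesis since $b^{+}\not\equiv 0$).
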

\begin{proof}
\begin{enumerate}
    \item If $B<0$ then the possible graph of $\sigma_u(t)$ and \eqref{fm-1} suggests that there exists exactly one point $t_1\in (0,t_{\max})$ such that $\sigma_u(t_1)= \la B$ and $\sigma_u^\prime (t_1)>0$ that is \[t_1u\in M_\la.\]
    Now since 
    \[\sigma_u^{\prime}(t) =t^{-\alpha} \phi_u^{\prime \prime}(t)-\alpha t^{-\alpha-1}\phi_u^\prime(t), \]
    we get that $t^{-\alpha} \phi_u^{\prime \prime}(t_1)= \sigma_u^\prime (t_1)>0$. Therefore $t_1u \in M_\la^+$. By uniqueness and property of $t_1$, we infer that $I_\la(t_1u)= \inf_{t>0}I_\la(tu)$.
    \item If $B\geq 0$ then possible graph of $\sigma_u(t)$ and \eqref{fm-1} suggests that there exists exactly two points $0<t_1<t_{\max}<t_2$ such that $\sigma_u(t_1)= \la B=\sigma_u(t_2)$ and $\sigma_u^\prime (t_1)>0>\sigma_u^\prime(t_2)$ that is \[t_1u,t_2u\in M_\la.\]
    Similarly as above case, we get that $t_1u\in M_\la^+$ and $t_2u\in M_\la^-$.
\end{enumerate}
\end{proof}
{
\begin{Lemma}\label{M+}
Let $M_{\lambda}^{+}\neq\emptyset$, then $\inf_{M_{\lambda}^{+}}I_{\lambda}<0$.
\end{Lemma}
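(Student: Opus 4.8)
The plan is to prove the stronger pointwise statement that $I_{\la}(u)<0$ for \emph{every} $u\in M_{\lambda}^{+}$; since $M_\lambda^+\neq\emptyset$, picking any such $u$ then yields $\inf_{M_\lambda^+}I_\la\le I_\la(u)<0$. Fix $u\in M_\lambda^+$ and, to lighten notation, set $P:=\int_\Om H(\nabla u)^p\,dx$ and $Q:=\int_\Om \mu(x)H(\nabla u)^q\,dx$ in addition to the quantities $A,B$ already introduced. The first thing I would do is translate the two defining conditions of $M_\lambda^+$ into algebraic relations among $P,Q,A,B$: membership $u\in M_\lambda$ means $\phi_u'(1)=0$, i.e. $P+Q=A+\la B$, while $u\in M_\lambda^+$ means $\phi_u''(1)>0$, i.e. $(p-1)P+(q-1)Q+\delta A-\la\alpha B>0$.

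Next I would eliminate the term $\la B$. Substituting $\la B=P+Q-A$ (from the first relation) into the second and collecting terms gives the strict lower bound
\[
(\alpha+\delta)\,A \;>\; (\alpha+1-p)\,P+(\alpha+1-q)\,Q,
\]
whose right-hand side is positive because $\alpha+1>q>p$ by (H1); this is exactly the $M_\lambda^+$-analogue of the equality \eqref{p5}. In parallel, substituting the same relation $\la B=P+Q-A$ into \eqref{engfnl} puts the energy into its Nehari form
\[
I_\la(u)=\Big(\tfrac1p-\tfrac{1}{\alpha+1}\Big)P+\Big(\tfrac1q-\tfrac{1}{\alpha+1}\Big)Q-\Big(\tfrac{1}{1-\delta}-\tfrac{1}{\alpha+1}\Big)A,
\]
which is the identity already used in Lemma \ref{cbb}. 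The crucial observation is that the coefficient of $A$ here is strictly negative (since $1-\delta<1<\alpha+1$), so replacing $A$ by its lower bound decreases the right-hand side.

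Carrying out that substitution is where the one genuine computation lies, and it is the step I expect to be the main (though modest) obstacle: one must notice the cancellation $\frac{\alpha+\delta}{\delta+\alpha}=1$, after which the coefficients of $P$ and $Q$ collapse to
\[
I_\la(u)<\frac{\alpha+1-p}{\alpha+1}\cdot\frac{(1-\delta)-p}{p(1-\delta)}\,P+\frac{\alpha+1-q}{\alpha+1}\cdot\frac{(1-\delta)-q}{q(1-\delta)}\,Q.
\]
Each factor $\alpha+1-p$ and $\alpha+1-q$ is positive while $(1-\delta)-p$ and $(1-\delta)-q$ are negative (as $1-\delta<1<p<q$ under (H1)), so both coefficients are strictly negative. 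Finally, since $u\neq0$ forces $\|u\|=\|H(\nabla u)\|_{\mathcal H}>0$ and hence $P>0$ by property (A2) of $H$, the right-hand side is strictly negative, giving $I_\la(u)<0$ and completing the proof. I would also note in passing that an alternative, more variational route is available through the fibering map: $\phi_u(t)\to0^-$ as $t\to0^+$ because the singular exponent $1-\delta$ is the smallest and $A>0$, and by Lemma \ref{t-1,t-2} the unique $M_\lambda^+$-point on each ray realizes $\inf_{t>0}\phi_u(t)$; forcing that point to be $t=1$ yields $I_\la(u)=\inf_{t>0}\phi_u(t)<0$. This second argument, however, requires $\la\in(0,\la_*)$, whereas the direct computation above needs no restriction on $\la$, so I would present the direct one.
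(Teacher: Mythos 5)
Your argument is correct and is essentially the paper's own proof: both use $\phi_u'(1)=0$ to eliminate one of the two nonlinear terms from $I_\la(u)$ and then invoke $\phi_u''(1)>0$ to bound the remaining one, arriving at the identical final estimate $I_\la(u)<\frac{(p+\delta-1)(p-\alpha-1)}{p(1-\delta)(\alpha+1)}\int_\Om H(\nabla u)^p\,dx+\frac{(q+\delta-1)(q-\alpha-1)}{q(1-\delta)(\alpha+1)}\int_\Om \mu(x)H(\nabla u)^q\,dx<0$. The only (immaterial) difference is bookkeeping: you eliminate $\la B$ and keep $A$, while the paper eliminates $A$ and keeps $\la B$.
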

\begin{proof}
Let $u\in M_{\lambda}^{+}$, then since $\phi_u^{'}(1)=0$, we get
\begin{equation}\label{q1}
-\frac{1}{1-\delta}\int_{\Om}a(x)|u|^{1-\delta}\,dx=-\frac{1}{1-\delta}\int_{\Om}(H(\nabla u)^p+\mu(x)H(\nabla u)^q)\,dx+\frac{\lambda}{1-\delta}\int_{\Om}b(x)|u|^{\alpha+1}\,dx.
\end{equation}
Also, due to $\phi_u^{\prime\prime}(1)>0$, we get
\begin{equation}\label{q2}
\lambda\int_{\Om}b(x)|u|^{\alpha+1}\,dx<\frac{p+\delta-1}{\alpha+\delta}\int_{\Om}H(\nabla u)^p\,dx+\frac{q+\delta-1}{\alpha+\delta}\int_{\Om}\mu(x)H(\nabla u)^q\,dx.
\end{equation}
Using \eqref{q1}, \eqref{q2} and the fact that $p<q<\alpha+1$, we have
\begin{equation}\label{q3}
\begin{split}
I_{\lambda}(u)&=\left(\frac{1}{p}-\frac{1}{1-\delta}\right)\int_{\Om}H(\nabla u)^p\,dx+\left(\frac{1}{q}-\frac{1}{1-\delta}\right)\int_{\Om}\mu(x)H(\nabla u)^q\,dx\\
&\quad+\lambda\left(\frac{1}{1-\delta}-\frac{1}{\alpha+1}\right)\int_{\Om}b(x)|u|^{\alpha+1}\,dx\\
&\leq\frac{(p+\delta-1)(p-\alpha-1)}{p(1-\delta)(\alpha+1)}\int_{\Om}H(\nabla u)^p\,dx+\frac{(q+\delta-1)(q-\alpha-1)}{q(1-\delta)(\alpha+1)}\int_{\Om}\mu(x)H(\nabla u)^q\,dx<0.
\end{split}
\end{equation}
Hence the result follows.
\end{proof} }

\subsection{Subcritical case}
{
\begin{Lemma}\label{mt-1}
There exists a $\Lambda_*>0$ and a a.e. nonnegative function $u_0$ in $\Om$ such that for any $\la \in (0,\Lambda_*)$, we have $u_0 \in M_\la^+$ and 
\[m_\la^+:=\inf_{M_\la^+} I_\la= I_\la(u_0)<0.\]
\end{Lemma}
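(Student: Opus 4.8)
The plan is to minimize $I_\la$ over $M_\la^+$ directly. First I set $\Lambda_*:=\min\{\la_0,\la_*\}$, where $\la_0$ comes from Lemma \ref{Nest0emp} (so that $M_\la^0=\emptyset$) and $\la_*$ from \eqref{fm-1} (so that Lemma \ref{t-1,t-2} applies), and fix $\la\in(0,\Lambda_*)$. For any $u\neq 0$, Lemma \ref{t-1,t-2} yields $t_1(u)u\in M_\la^+$, hence $M_\la^+\neq\emptyset$, and Lemma \ref{M+} gives $m_\la^+<0$. Then I take a minimizing sequence $\{u_n\}\subset M_\la^+$ with $I_\la(u_n)\to m_\la^+$. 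Since $H(\nabla|u_n|)=H(\nabla u_n)$ a.e.\ by (A4) and every term in $I_\la,\phi_{u_n}',\phi_{u_n}''$ depends on $u_n$ only through $|u_n|$, we have $|u_n|\in M_\la^+$ with $I_\la(|u_n|)=I_\la(u_n)$, so we may assume $u_n\geq 0$. By Lemma \ref{cbb} the sequence is bounded, so by reflexivity and the compact embedding in Proposition \ref{proposition_embeddings}(ii), after passing to a subsequence $u_n\rightharpoonup u_0\geq 0$ in $\V$, with $u_n\to u_0$ in $L^r(\Om)$ for every $r<p^*$ and a.e.\ in $\Om$.

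Next I record the relevant (semi)continuity. The map $u\mapsto\int_\Om\big(\tfrac1p H(\nabla u)^p+\tfrac1q\mu(x)H(\nabla u)^q\big)\,dx$ is convex, hence weakly sequentially lower semicontinuous. For the lower-order terms, $|u_n|^{1-\delta}$ and $|u_n|^{\alpha+1}$ are bounded in $L^{p^*/(1-\delta)}(\Om)$, resp.\ $L^{p^*/(\alpha+1)}(\Om)$, and converge a.e., hence weakly in those spaces; since $a\in L^\infty(\Om)$ and $b\in L^{p^*/(p^*-\alpha-1)}(\Om)=\big(L^{p^*/(\alpha+1)}(\Om)\big)'$ (this duality is exactly where $\alpha+1<p^*$ is essential), I get $A(u_n)\to A(u_0)$ and $B(u_n)\to B(u_0)$. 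Combining these, $I_\la(u_0)\leq\liminf_n I_\la(u_n)=m_\la^+$. Moreover $u_0\neq 0$: if $u_0=0$ then $A(u_n),B(u_n)\to 0$ and nonnegativity of the modular forces $\liminf_n I_\la(u_n)\geq 0>m_\la^+$, a contradiction.

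Then I pass the Nehari constraint to the limit. From $u_n\in M_\la^+\subset M_\la$, i.e.\ $\phi_{u_n}'(1)=0$, we have $\int_\Om(H(\nabla u_n)^p+\mu H(\nabla u_n)^q)\,dx=A(u_n)+\la B(u_n)\to A(u_0)+\la B(u_0)$, so weak lower semicontinuity of the modular gives $\phi_{u_0}'(1)\leq 0$, i.e.\ $\sigma_{u_0}(1)\leq\la B(u_0)$. Separately, the defining inequality $\phi_{u_n}''(1)>0$ for $M_\la^+$ rearranges (by the algebra of Lemma \ref{Nest0emp}, now with strict inequality) to $(\alpha+\delta)A(u_n)>E_{u_n}(1)$; letting $n\to\infty$, using $A(u_n)\to A(u_0)$ and lower semicontinuity of the two modular pieces constituting $E_{\cdot}(1)$, gives $E_{u_0}(1)\leq(\alpha+\delta)A(u_0)$, hence $1\leq t_{\max}(u_0)$ because $E_{u_0}$ is strictly increasing. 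Since $\sigma_{u_0}$ increases on $(0,t_{\max}(u_0))$ with $\sigma_{u_0}(t_1(u_0))=\la B(u_0)$ and $t_1(u_0)<t_{\max}(u_0)$, the two facts $\sigma_{u_0}(1)\leq\la B(u_0)$ and $1\leq t_{\max}(u_0)$ force $1\leq t_1(u_0)$. As $\phi_{u_0}$ strictly decreases on $(0,t_1(u_0))$, this yields $\phi_{u_0}(1)\geq\phi_{u_0}(t_1(u_0))$, and since $t_1(u_0)u_0\in M_\la^+$ I obtain the chain $\phi_{u_0}(t_1(u_0))\leq\phi_{u_0}(1)=I_\la(u_0)\leq m_\la^+\leq\phi_{u_0}(t_1(u_0))$. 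Thus equality holds throughout; strict monotonicity of $\phi_{u_0}$ on $(0,t_1(u_0))$ forces $t_1(u_0)=1$, so $u_0\in M_\la^+$ and $I_\la(u_0)=m_\la^+<0$, as claimed.

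The main obstacle is the third step: because the singular term makes $I_\la$ non-$C^1$, I cannot close the argument with a Lagrange multiplier/deformation and must instead verify that the weak limit genuinely lands in $M_\la^+$ rather than drifting onto the other fibering branch. The delicate point is excluding the possibility $1>t_2(u_0)$, and this is exactly what the bound $1\leq t_{\max}(u_0)$ accomplishes; obtaining that bound by pushing the strict $M_\la^+$ inequality $(\alpha+\delta)A(u_n)>E_{u_n}(1)$ through weak lower semicontinuity is the crux. A secondary technical care-point is the weak continuity of $B(\cdot)$, where the exponent of $b$ must be handled as the precise dual of $L^{p^*/(\alpha+1)}$ and the a.e.\ convergence upgraded to weak convergence of $|u_n|^{\alpha+1}$.
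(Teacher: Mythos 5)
Your proof is correct, and while its opening (the choice $\Lambda_*=\min\{\la_0,\la_*\}$, the minimizing sequence, boundedness via Lemma \ref{cbb}, the convergence of $A(u_n)$ and $B(u_n)$ --- you use $L^{p^*/(\alpha+1)}$--duality where the paper uses Vitali's theorem, which is equivalent here --- and the nontriviality of $u_0$) coincides with the paper's, the way you close the argument is genuinely different. The paper first proves, by contradiction, that $\liminf_n\rho_{\mathcal H}(\nabla u_n)=\rho_{\mathcal H}(\nabla u_0)$, upgrades this to strong convergence $u_n\to u_0$ in $W_0^{1,\mathcal H}(\Om)$, and only then passes $\phi_{u_n}'(1)=0$ and $\phi_{u_n}''(1)>0$ to the limit, invoking $M_\la^0=\emptyset$ to turn $\phi_{u_0}''(1)\geq 0$ into a strict inequality. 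You instead keep only weak lower semicontinuity, pass the two Nehari constraints to the limit as the inequalities $\phi_{u_0}'(1)\leq 0$ and $E_{u_0}(1)\leq(\alpha+\delta)A(u_0)$ (equivalently $1\leq t_{\max}(u_0)$), and then exploit the fibering-map geometry through the sandwich $\phi_{u_0}(t_1)\leq\phi_{u_0}(1)=I_\la(u_0)\leq m_\la^+\leq\phi_{u_0}(t_1)$ to force $t_1(u_0)=1$, hence $u_0\in M_\la^+$. This is cleaner: it avoids the paper's somewhat delicate claim \eqref{mt-1-3} and the subsequent appeal to modular-to-norm convergence, at the cost of not producing strong convergence of the minimizing sequence as a byproduct (which the lemma does not assert, and which the critical case re-establishes separately in Proposition \ref{strong-con} anyway). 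Two small points worth making explicit if you write this up: the passage from $\sigma_{u_0}(1)\leq\la B(u_0)$ and $1\leq t_{\max}(u_0)$ to $1\leq t_1(u_0)$ uses that $\sigma_{u_0}$ is strictly increasing on $(0,t_{\max}]$ together with \eqref{fm-1} (valid since $\la<\la_*$), which guarantees $\sigma_{u_0}(t_{\max})>\la B(u_0)$ so that the crossing $t_1<t_{\max}$ on the increasing branch exists; and the weak lower semicontinuity of $E_{\cdot}(1)$ relies on both coefficients $\alpha+1-p$ and $\alpha+1-q$ being positive, which holds by (H1).
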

\begin{proof}
We fix our $\Lambda_* := \min \{\la_0,\la_*\}$, where $\la_0$ and $\la_*$ are given by Lemma \ref{Nest0emp} and Lemma \ref{t-1,t-2} respectively. Recall that by Lemma \ref{t-1,t-2}, we know that $M_\la^+\neq \emptyset$. So we assume $\{u_n\}_{n\in\mathbb{N}}\subset M_\la^+$ denotes the minimizing sequence for $m_\la^+$, that is 
\[I_\la(u_n)\to m_\la^+<0\;\text{as}\; n\to \infty.\]
Note that $m_\la^+<0$ follows from Lemma \ref{M+} above. By Lemma \ref{cbb}, the sequence $\{u_n\}_{n\in\mathbb{N}}$ is bounded in $W^{1,\mc H}_0(\Om)$ which implies that there exists $u_0\in W^{1,\mc H}_0(\Om)$ such that upto a subsequence
\begin{equation}
    u_n \rightharpoonup u_0 \; \text{weakly in}\; W^{1,\mc H}_0(\Om)\;\text{ and }\; u_n \to u_0 \;\text{strongly in} \; L^r(\Om),\;\text{for every}\;r\in (p,p^*).
\end{equation}
Also we have $b(x)u_n(x) \to b(x)u_0(x)$ pointwise for a.e. $x\in\Om$ as $n\to \infty$. {Since $\{u_n\}_{n\in\mathbb{N}}$ is bounded in $W^{1,\mc H}_0(\Om)$, there exists a constant $K>0$ (independent of $n$) such that $|u_n|_{p^*}^{\alpha+1}\leq K$ for all $n\in \mb N$. This implies for any measurable set $E\subset \Om$, 
\[ \int_{E}b(x)|u_n|^{\alpha+1}\,dx  \leq M\left(\int_E |b(x)|^{\frac{p^*}{p^*-\alpha-1}}~dx\right)^{\frac{p^*-\alpha-1}{p^*}} \]
and since $b\in L^{\frac{p^*}{p^*-\alpha-1}}(\Om)$, for any $\epsilon>0$, there exists a $\eta>0$ such that for any measurable set $E\subset \Om$
%{\color{blue}(how to get $|E|$ on the right side? If it can be done, it might be better to the expression above.)} 
such that $|E|<\eta$, 
%{\color{blue}How to get the next inequality? Does it follow by H\"older's inequality? In any case, may be we should write the expression of $|E|$ that comes in the right hand side.}
\[\int_E |b(x)|^{\frac{p^*}{p^*-\alpha-1}}~dx \leq \frac{\epsilon^{\frac{p^*}{p^*-\alpha-1}}}{M}\]
asserting that 
\[\int_{E}b(x)|u_n|^{\alpha+1}\,dx  \leq \epsilon,\;\forall n\in \mb N.\]
 Therefore
\begin{equation}\label{mt-1-1}
   \lim_{n\to \infty}  B_n :=\lim_{n\to \infty} \int_{\Om}b(x)|u_n|^{\alpha+1}\,dx  = \int_{\Om}b(x)|u_0|^{\alpha+1}\,dx
\end{equation}
which follows by applying the Vitali's
convergence theorem. By a similar argument, we obtain
\begin{equation}\label{mt-1-2}
   \lim_{n\to \infty} \int_{\Om}a(x)|u_n|^{1-\delta}\,dx = \int_{\Om}a(x)|u_0|^{1-\delta}\,dx.
\end{equation} }
{Using \eqref{mt-1-1} and \eqref{mt-1-2}} along with weak lower semicontinuity of norms, it follows that
\[I_\la(u_0)\leq \liminf_{n\to \infty}I_\la(u_n) = m_\la^+<0=I_\la(0),\]
which clearly shows that $u_0$ is not identically zero inside $\Om$. {Lemma \ref{t-1,t-2} says that there exists a $t_1>0$ such that $t_1u_0\in M_\la^+$.
By definition of the modular function, we know 
$\liminf_{n\to \infty}\rho_{\mc H}(u_n)\geq  \rho_{\mc H}(u_0)$.}
We now claim that 
\begin{equation}\label{mt-1-3}
    \liminf_{n\to \infty}\rho_{\mc H}(u_n)= \rho_{\mc H}(u_0).
\end{equation}
Because if not, then 
$\liminf\limits_{n\to \infty}\rho_{\mc H}(u_n)> \rho_{\mc H}(u_0)$
which together with \eqref{mt-1-1} and \eqref{mt-1-2} implies that
\begin{align*}
    \liminf_{n\to \infty} \phi_{u_n}^\prime(t_1) &= \liminf_{n\to \infty} \int_{\Om}\Big({t_1^{p-1}H(\nabla u_n)^p}+\mu(x){t_1^{q-1}H(\nabla u_n)^q}\Big)\,dx-{t_1^{-\delta}}\int_{\Om}a(x)|u_n|^{1-\delta}\,dx\\
    &\quad -{\lambda t_1^{\alpha}}\int_{\Om}b(x)|u_n|^{\alpha+1}\,dx\\
    & > \int_{\Om}\Big({t_1^{p-1}H(\nabla u_0)^p}+\mu(x){t_1^{q-1}H(\nabla u_0)^q}\Big)\,dx-{t_1^{-\delta}}\int_{\Om}a(x)|u_0|^{1-\delta}\,dx\\
    &\quad -{\lambda t_1^{\alpha}}\int_{\Om}b(x)|u_0|^{\alpha+1}\,dx= \phi_{u_0}^\prime(t_1)=0.
\end{align*}
Thus, for $n$ sufficiently large, $\phi_{u_n}^\prime(t_1)>0$. But $u_n\in M_\la^+$ and { $\phi_{u_n}^\prime(t)=t^\alpha(\sigma_{u_n}(t)-\la B_n)$  says that $\phi_{u_n}^\prime(t)<0$ for $t\in (0,1)$ and $\phi^\prime_{u_n}(1)=0$. Therefore $t_1>1$ and $\phi_{u_0}$ is decreasing in $(0,t_1]$,} which finally gives that
\[m_\la^+\leq {I_\la(t_1u_0)\leq I_\la(u_0)< \liminf_{n\to \infty}I_\la(u_n)} =m_\la^+. \]
{Thus we arrive at a contradiction, hence proving our claim.} Our next claim is that, upto a subsequence, $u_n \to u_0$ strongly in $W_0^{1,\mc H}(\Om)$ as $n\to \infty$. But this naturally follows from \cite[Proposition 2.2-(v)]{win} and \eqref{mt-1-3}.
Therefore, 
\[m_\la^+ = \lim_{n\to \infty}I_\la(u_n)= I_\la(u_0)\;\text{ and }\; \phi_{u_0}^\prime(1)=0.\]
Also, $\phi_{u_0}^{\prime\prime}(1)= \lim\limits_{n\to \infty}\phi_{u_n}^{\prime\prime}(1)\geq 0$, since $u_n\in M_\la^+$ for each $n\in \mb N$. {But our choice of $\Lambda_*$  and Lemma \ref{t-1,t-2} asserts that $\phi_{u_0}^{\prime\prime}(1)>0$} that is $u_0\in M_\la^+ $. Since $I_\la(u_0)=I_\la(|u_0|)$, we take into account $|u_0|$ instead of $u_0$ and w.l.o.g. assume that $u_0\geq 0$ a.e. in $\Om$. This completes the proof.
\end{proof} }

{{
\begin{Lemma}\label{lem3.5}
Let $u\in M_{\la}^{\pm}$, then there exists $\epsilon>0$ and a continuous function $v:B_{\epsilon}(0)\to (0,\infty)$ such that
\begin{equation}\label{3.5eqn}
v(0)=1\text{ and }v(y)(u+y)\in M_{\la}^{\pm},\quad\forall y\in B_{\epsilon}(0),
\end{equation}
where $B_{\epsilon}(0):=\big\{u\in W_0^{1,\mathcal{H}}(\Om):\|u\|_{W_0^{1,\mathcal{H}}(\Om)}<\epsilon\big\}$.
\end{Lemma}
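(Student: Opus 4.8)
The plan is to realize the fibered projection $t\mapsto t(u+y)$ onto the Nehari set by an implicit function argument, taking care that the singular term forbids full Fr\'echet differentiability of the energy in the $\V$-direction. On a neighbourhood of $(0,1)$ in $\V\times(0,\infty)$ I would introduce the scalar map
\[
\xi(y,t):=\langle I_\la'(t(u+y)),t(u+y)\rangle = t\,\phi_{u+y}'(t),
\]
which, written out, reads
\[
\xi(y,t)= t^p P(u+y)+t^q Q(u+y)-t^{1-\delta}A(u+y)-\la t^{\alpha+1}B(u+y),
\]
where $P(w)=\int_\Om H(\nabla w)^p\,dx$, $Q(w)=\int_\Om \mu(x)H(\nabla w)^q\,dx$, $A(w)=\int_\Om a(x)|w|^{1-\delta}\,dx$ and $B(w)=\int_\Om b(x)|w|^{\alpha+1}\,dx$. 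Since $u\in M_\la$, we have $\xi(0,1)=\phi_u'(1)=0$, and for $t>0$ with $u+y\neq 0$ the equation $\xi(y,t)=0$ is exactly the statement $t(u+y)\in M_\la$.

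The delicate point, and the reason the usual $C^1$ implicit function theorem is unavailable, is that $A$ is \emph{not} Fr\'echet differentiable on $\V$, its formal derivative involving the singular weight $|w|^{-\delta}$. I would get around this by invoking the continuous version of the implicit function theorem, which requires only that $\xi$ be continuous and that the \emph{partial} derivative $\partial_t\xi$ exist and be continuous near $(0,1)$ with $\partial_t\xi(0,1)\neq 0$; no differentiation in the $\V$-variable $y$ is ever needed. Continuity of the four functionals $P,Q,A,B$ on $\V$ follows from the continuity of the modular (cf. Proposition \ref{proposition_modular_properties}) and the embeddings of Proposition \ref{proposition_embeddings} together with $1-\delta,\alpha+1<p^*$ and the integrability assumptions (H2) on $a,b$. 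Since $y\mapsto u+y$ is continuous, both $\xi$ and
\[
\partial_t\xi(y,t)= p t^{p-1}P(u+y)+q t^{q-1}Q(u+y)-(1-\delta)t^{-\delta}A(u+y)-\la(\alpha+1)t^{\alpha}B(u+y)
\]
are continuous in $(y,t)$ once $t$ is bounded away from $0$, all powers of $t$ (including the singular one, after differentiation in $t$) being smooth there.

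A direct computation using $\phi_u'(1)=0$ gives $\partial_t\xi(0,1)=\phi_u'(1)+\phi_u''(1)=\phi_u''(1)$, which is strictly positive when $u\in M_\la^+$ and strictly negative when $u\in M_\la^-$, hence nonzero in both cases. The implicit function theorem then yields $\epsilon>0$ and a continuous $v:B_\epsilon(0)\to(0,\infty)$ with $v(0)=1$ and $\xi(y,v(y))=0$ for all $y\in B_\epsilon(0)$; shrinking $\epsilon$ keeps $v(y)$ near $1$ (so positive) and $u+y\neq 0$, whence $v(y)(u+y)\in M_\la$.

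It remains to check membership in the correct branch. Writing $z_y:=v(y)(u+y)$, continuity of $v$ and of $y\mapsto u+y$ gives $z_y\to u$ in $\V$ as $y\to 0$; since $z\mapsto \phi_z''(1)=(p-1)P(z)+(q-1)Q(z)+\delta A(z)-\la\alpha B(z)$ is continuous on $\V$ by continuity of $P,Q,A,B$, and $\phi_u''(1)\neq 0$, the sign of $\phi_{z_y}''(1)$ is preserved for $\epsilon$ small, so $z_y\in M_\la^+$ when $u\in M_\la^+$ and $z_y\in M_\la^-$ when $u\in M_\la^-$. This furnishes \eqref{3.5eqn}. I expect the step deserving the most care to be precisely the justification that $\partial_t\xi$ is continuous despite the singular term, that is, the argument for using the continuous rather than the $C^1$ implicit function theorem.
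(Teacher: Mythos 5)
Your proof is correct and follows essentially the same route as the paper: both apply the implicit function theorem to a rescaled fibering derivative (you use $t\,\phi_{u+y}'(t)$, the paper uses $t^{\delta}\phi_{u+y}'(t)$), observe that the $t$-partial at $(0,1)$ reduces to $\phi_u''(1)\neq 0$, and then preserve the sign of $\phi''(1)$ by continuity to land in the correct branch $M_\la^{\pm}$. Your explicit remark that only the continuous-in-$y$, $C^1$-in-$t$ version of the implicit function theorem is needed (since the singular term prevents Fr\'echet differentiability in the $\V$-direction) is a point the paper passes over silently, and is a welcome clarification rather than a deviation.
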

\begin{proof}
Let us define $F:W_0^{1,\mathcal{H}}(\Om)\times(0,\infty)\to\mathbb{R}$ by 
\begin{align*}
F(y,t)&=t^{p+\delta-1}\int_{\Om}H(u+y)^p\,dx+t^{q+\delta-1}\int_{\Om}\mu(x)H(u+y)^q\,dx\\
&\quad\quad-\int_{\Om}a(x)|u+y|^{1-\delta}\,dx-\la t^{\alpha+\delta}\int_{\Om}b(x)|u+y|^{\alpha+1}\,dx
\end{align*}
for every $y\in W_0^{1,\mathcal{H}}(\Om)$. Since $u\in M_{\la}^{+}\subset M_\la$, we get $F(0,1)=0$. Moreover, due to the fact that $u\in M_{\la}^{+}$, we notice that $F_t((0,1))>0$. Therefore, by the implicit function theorem, there exists $\epsilon>0$ and a continuous function $v:B_{\epsilon}(0)\to(0,\infty)$ such that
{
$$
v(0)=1\text{ and }F(y,v(y))=0\quad\forall \;y\in B_{\epsilon}(0)
$$
i.e. $F(y,v(y)) \in M_\la$ when $y\in B_\epsilon(0)$. A quick observation suggests that 
\[F_t(y,v(y))= (v(y))^\delta \phi_{u+y}^{\prime \prime}(v(y)),\;\;\;\; \forall\; y\in B_\epsilon(0)\]
is continuous. Therefore since $F_t((0,1))>0$, we can choose $\epsilon>0$ small enough further to get
$$
v(0)=1\text{ and }v(y)(u+y)\in M_\la^{+}\quad\forall\; y\in B_{\epsilon}(0).
$$}
The proof for $M_{\la}^{-}$ is similar.
\end{proof}

\begin{Lemma}\label{p3.6}
Let $h\in W_0^{1,\mathcal{H}}(\Om)$ and  $\Lambda_*,u_0$ are as given by Lemma \ref{mt-1}. Then for any $\la\in(0,\Lambda_*)$, there exists $\beta>0$ such that $I_\la(u_0)\leq I_\la(u_0+th)$ for every $t\in[0,\beta]$. 
\end{Lemma}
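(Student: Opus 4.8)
The plan is to prove that $u_0$, the minimizer of $I_\la$ over $M_\la^+$ produced by Lemma \ref{mt-1}, is a one-sided directional local minimizer of $I_\la$. The mechanism is to project each nearby competitor $u_0+th$ back onto $M_\la^+$ using the continuous map supplied by Lemma \ref{lem3.5}, and then to compare $I_\la$ along the fibering ray through $u_0+th$.

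First I would apply Lemma \ref{lem3.5} to $u_0\in M_\la^+$, obtaining $\epsilon>0$ and a continuous $v:B_\epsilon(0)\to(0,\infty)$ with $v(0)=1$ and $v(y)(u_0+y)\in M_\la^+$ for all $y\in B_\epsilon(0)$. If $h=0$ the claim is trivial, so assume $h\neq0$ and take $t\in[0,\epsilon/\|h\|)$, so that $th\in B_\epsilon(0)$. Writing $w_t:=u_0+th$ and $s(t):=v(th)$, we have $s(t)w_t\in M_\la^+$ with $s(0)=1$, and by the uniqueness in Lemma \ref{t-1,t-2} the number $s(t)$ is exactly the value, call it $t_1(w_t)$, at which the ray $\{rw_t:r>0\}$ meets $M_\la^+$. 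Since $u_0$ realizes $m_\la^+=\inf_{M_\la^+}I_\la$, this already gives
\[
I_\la(u_0)\le I_\la\big(s(t)w_t\big)=\phi_{w_t}(s(t)).
\]

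It then remains to show $\phi_{w_t}(s(t))\le\phi_{w_t}(1)=I_\la(u_0+th)$, which closes the chain. Here I would use the fibering analysis carried out before Lemma \ref{t-1,t-2}: for $\la\in(0,\la_*)$ and $w\neq0$ one has $\phi_w'(r)=r^\alpha(\sigma_w(r)-\la B)$, where $\sigma_w$ increases on $(0,t_{\max}(w))$ and $t_1(w)<t_{\max}(w)$; hence $\phi_w$ decreases on $(0,t_1(w))$ and increases on $(t_1(w),t_{\max}(w))$, so $t_1(w)$ minimizes $\phi_w$ over the whole interval $(0,t_{\max}(w))$. Applying this with $w=w_t$ and $s(t)=t_1(w_t)$, the comparison follows as soon as $1\in(0,t_{\max}(w_t))$ for small $t$, since then $\phi_{w_t}(1)\ge\phi_{w_t}(t_1(w_t))=\phi_{w_t}(s(t))$.

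To secure $t_{\max}(w_t)>1$ I would argue by continuity. At $t=0$ we have $t_1(u_0)=1<t_{\max}(u_0)$ because $u_0\in M_\la^+$, so $t_{\max}(w_0)>1$ strictly. The value $t_{\max}(w_t)$ is determined by $E_{w_t}(t_{\max})=(\alpha+\delta)A$ with $E_{w_t}$ strictly increasing (property (i) preceding Lemma \ref{t-1,t-2}), and the coefficient integrals $\int_\Om H(\nabla w_t)^p\,dx$, $\int_\Om\mu(x)H(\nabla w_t)^q\,dx$ and $\int_\Om a(x)|w_t|^{1-\delta}\,dx$ vary continuously with $t$ (using that $t\mapsto w_t$ is continuous into $W_0^{1,\mathcal H}(\Om)$ together with Proposition \ref{proposition_embeddings}); the implicit function theorem then makes $t\mapsto t_{\max}(w_t)$ continuous, so $t_{\max}(w_t)>1$ on some $[0,\beta]$. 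Combining the two displayed inequalities yields $I_\la(u_0)\le I_\la(u_0+th)$ for all $t\in[0,\beta]$. The step I expect to be most delicate is precisely this comparison $\phi_{w_t}(s(t))\le\phi_{w_t}(1)$: when $B=\int_\Om b(x)|w_t|^{\alpha+1}\,dx>0$ the fibering map eventually decreases to $-\infty$, so $t_1(w_t)$ is only a \emph{local} minimum and one cannot claim global minimality of $s(t)$; it is the continuity argument placing $1$ strictly inside the valley $(0,t_{\max}(w_t))$ that makes the comparison legitimate.
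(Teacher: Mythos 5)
Your proposal is correct and follows essentially the same route as the paper: project $u_0+th$ back onto $M_\la^+$ via the continuous map from Lemma \ref{lem3.5}, use the minimality of $u_0$ from Lemma \ref{mt-1} to get $I_\la(u_0)\le \phi_{u_0+th}(v(th))$, and then compare $\phi_{u_0+th}(v(th))\le\phi_{u_0+th}(1)$ along the fibering ray for small $t$. Your justification of that last comparison (identifying $v(th)$ with $t_1(u_0+th)$ via the uniqueness in Lemma \ref{t-1,t-2} and placing $1$ inside the valley by continuity of $t_{\max}$) is in fact spelled out more carefully than in the paper's own rather terse argument.
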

\begin{proof}
Let us define $g_h:[0,\infty)\to\mathbb{R}$ by
\begin{align*}
g_h(t)&=(p-1)\int_{\Om}H(\nabla (u_0+th))^p\,dx+(q-1)\int_{\Om}\mu(x)H(\nabla (u_0+th))^q\,dx\\
&\quad\quad+\delta\int_{\Om}a(x)|u_0 +th|^{1-\delta}\,dx-\la\alpha{\int_{\Om}b(x)|u_0 +th|^{\alpha+1}\,dx} = \phi_{u_0+th}^{\prime \prime}(1).
\end{align*}
Using the fact that $\phi_{u_0}^{''}(1)>0$  and 
{continuity of $g_h$, there exists ${\beta_1}>0$ such that $g_h(t)>0$ for all $t\in[0,{\beta_1}]$.  On the other hand, by Lemma \ref{lem3.5}, for every $t\in[0,{\beta_1}]$, we find a $\epsilon>0$ such that there exist a continuous map $v: B_\epsilon(0) \to (0,\infty)$ such that
\begin{equation}\label{p363}
v(th)(u_0 +th)\in M_\la^{+},\;\forall \;t\in \{t\in[0,\beta_1]:\; th\in B_\epsilon(0)\}\;\text{ and }v(th)\to 1\text{ as }t\to 0^{+}.
\end{equation}
The continuity of map $u_0\mapsto \phi_{u_0}^{''}(1)$ and $\phi_{u_0+th}'(v(th))=0$ suggests that there exists $\beta \in (0,\beta_1)$ such that $\phi_{u_0+th}^{\prime\prime}(1)>0$ and $\phi_{u_0+th}(v(th)) \leq \phi_{u_0+th}(1)$ when $t\in [0,\beta]$.
 By Lemma \ref{mt-1}, we have 
 \begin{equation}\label{p364}
 m_\la^{+}=I_\la(u_0)\leq I_\la\big(v(th)(u_0 +th)\big)= \phi_{u_0+th}(v(th))\leq \phi_{u_0+th}(1) = I_\la(u_0+th), \;\forall\; t\in[0,{\beta}]. 
 \end{equation}
 }
Hence the result follows.
\end{proof} }

\begin{Lemma}\label{p37}
Suppose $\Lambda_*$ and $u_0\in M_\la^{+}$ are as given by Lemma \ref{mt-1}. Then for any $\lambda\in(0,\Lambda_*)$, $u_0$ is a weak solution of \eqref{maineqn} such that $I_\la(u_0)<0$.
\end{Lemma}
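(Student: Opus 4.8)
The only thing left to prove is the weak formulation \eqref{wksoleqn}, since the strict bound $I_\la(u_0)<0$ is already part of Lemma \ref{mt-1}. Throughout, write
\[\mathcal A(\phi):=\int_\Om\big(H(\nabla u_0)^{p-1}+\mu(x)H(\nabla u_0)^{q-1}\big)\nabla_\eta H(\nabla u_0)\nabla\phi\,dx\]
for the principal part of the operator evaluated at $u_0$, so that $\mathcal A(u_0)=\int_\Om(H(\nabla u_0)^p+\mu(x)H(\nabla u_0)^q)\,dx$ by the $1$-homogeneity of $H$.

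First I would extract a one-sided Euler--Lagrange inequality from the minimality encoded in Lemma \ref{p3.6}. Fix $\phi\in C_c^1(\Om)$ with $\phi\ge0$ and apply Lemma \ref{p3.6} with $h=\phi$: there is $\beta>0$ with $I_\la(u_0+t\phi)\ge I_\la(u_0)$ for all $t\in[0,\beta]$, so $t\mapsto I_\la(u_0+t\phi)$ has a right-hand minimum at $t=0$ and the difference quotients $\tfrac1t\big(I_\la(u_0+t\phi)-I_\la(u_0)\big)$ are nonnegative. The principal part and the ($L^1$-controlled, subcritical) $b$-term are differentiable in $t$ and pass to the limit, producing $\mathcal A(\phi)$ and $-\la\int_\Om b\,u_0^{\alpha}\phi\,dx$. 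For the singular term I use that $u_0\ge0$ and $\phi\ge0$ make the integrand $\tfrac{a(x)}{(1-\delta)t}\big((u_0+t\phi)^{1-\delta}-u_0^{1-\delta}\big)$ nonnegative, with pointwise limit $a(x)u_0^{-\delta}\phi$ (equal to $+\infty$ on $\{u_0=0,\ \phi>0\}$); Fatou's lemma then bounds its $\liminf$ below by $\int_\Om a\,u_0^{-\delta}\phi\,dx$. Passing to the $\liminf$ as $t\to0^+$ yields
\[0\ \le\ \mathcal A(\phi)-\la\int_\Om b\,u_0^{\alpha}\phi\,dx-\int_\Om a\,u_0^{-\delta}\phi\,dx.\]
This single inequality does three things: it shows the singular integral is finite; taking $\phi$ strictly positive on sets exhausting $\Om$ forces $|\{u_0=0\}|=0$, that is $u_0>0$ a.e.\ (otherwise the Fatou lower bound would be $+\infty$); and, extended by density, it gives
\[\mathcal A(\psi)\ \ge\ \int_\Om a\,u_0^{-\delta}\psi\,dx+\la\int_\Om b\,u_0^{\alpha}\psi\,dx\qquad\text{for all }\psi\in W_0^{1,\mathcal H}(\Om)\text{ with }\psi\ge0.\]

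To promote this one-sided inequality to an identity valid for sign-changing test functions, I would use the standard truncation trick. Given arbitrary $\psi\in W_0^{1,\mathcal H}(\Om)$ and $\epsilon>0$, insert the admissible nonnegative function $w_\epsilon=(u_0+\epsilon\psi)^+$ into the inequality above. Writing $w_\epsilon=u_0+\epsilon\psi-(u_0+\epsilon\psi)^-$ and using that $u_0\in M_\la$ provides the relation $\mathcal A(u_0)=\int_\Om a\,u_0^{1-\delta}\,dx+\la\int_\Om b\,u_0^{\alpha+1}\,dx$, the terms of order $\epsilon^0$ cancel; after dividing by $\epsilon>0$ one is left with $\mathcal A(\psi)-\int_\Om a\,u_0^{-\delta}\psi\,dx-\la\int_\Om b\,u_0^{\alpha}\psi\,dx$ together with error contributions supported on $\Om_\epsilon^-:=\{u_0+\epsilon\psi<0\}$. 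Since $u_0>0$ a.e.\ we have $|\Om_\epsilon^-|\to0$ as $\epsilon\to0^+$, the principal error is $O\big(\tfrac1\epsilon\int_{\Om_\epsilon^-}(H(\nabla u_0)^p+\mu(x)H(\nabla u_0)^q)\,dx\big)$ and the singular error carries a favourable sign, so both are negligible in the limit by absolute continuity of the integral. Letting $\epsilon\to0^+$ gives $\mathcal A(\psi)\ge\int_\Om a\,u_0^{-\delta}\psi\,dx+\la\int_\Om b\,u_0^{\alpha}\psi\,dx$ for every $\psi$; replacing $\psi$ by $-\psi$ yields the reverse inequality, hence equality, which is precisely \eqref{wksoleqn}.

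The main obstacle is the singular nonlinearity $u_0^{-\delta}$ at both stages. In the first stage one cannot differentiate under the integral sign, so the computation must be recast as a Fatou lower bound; the bonus is that the very same step simultaneously delivers the integrability of $a\,u_0^{-\delta}\phi$ and the a.e.\ positivity of $u_0$ without which $u_0^{-\delta}$ is meaningless. The more delicate point is controlling, in the truncation step, the integrals over $\Om_\epsilon^-$ --- exactly the region where $u_0$ is smallest and the singular term largest --- so that $\tfrac1\epsilon$ times their contribution still vanishes; this rests on $|\Om_\epsilon^-|\to0$, the sign of the singular contribution there, and absolute continuity of the integral, rather than on any pointwise control of $u_0^{-\delta}$.
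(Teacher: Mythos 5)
Your argument is correct and follows essentially the same route as the paper: a one-sided Euler--Lagrange inequality for nonnegative test functions obtained from Lemma \ref{p3.6} plus Fatou's lemma on the singular term, then upgraded to an equality for arbitrary test functions by inserting $(u_0+\epsilon\phi)^+$, cancelling the zeroth-order terms via $u_0\in M_\lambda$, and disposing of the errors on $\{u_0+\epsilon\phi<0\}$ by their sign and the smallness of that set. The only (cosmetic) difference is that you extract $u_0>0$ a.e.\ directly from the blow-up of the Fatou lower bound, whereas the paper proves it at the very end by the equivalent observation that the difference quotient would tend to $-\infty$ on a zero set of positive measure; your ordering is actually the tidier one, since the claim $|\{u_0+\epsilon\phi<0\}|\to 0$ for sign-changing $\phi$ already presupposes that positivity.
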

\begin{proof}
Let $h\in W_0^{1,\mathcal{H}}(\Om)$ be nonnegative in $\Om$. Then by Lemma \ref{p3.6} for every $t\in(0,\beta)$, we have
$
I_\la(u_0+th)-I_\la(u_0)\geq 0.
$
Therefore,
\begin{equation}\label{p371}
\begin{split}
&\frac{1}{p}\int_{\Om}(H(\nabla(u_0 + th))^p-H(\nabla u_0)^p)\,dx+
\frac{1}{q}\int_{\Om}\mu(x)(H(\nabla(u_0 +th))^q-H(\nabla u_0)^q)\,dx\\
&\quad-\frac{\la}{\alpha+1}\int_{\Om}b(x)(|u_0+th|^{\alpha+1}-|u_0|^{\alpha+1})\,dx-\frac{1}{1-\delta}\int_{\Om}a(x)(|u_0 + th|^{1-\delta}-|u_0|^{1-\delta})\,dx\geq 0.
\end{split}
\end{equation}
Dividing both sides of \eqref{p371} by $t$ and letting $t\to 0^+$, we obtain
\begin{equation}\label{p372}
I-J-K\geq 0,
\end{equation}
where
\begin{equation}\label{iest}
\begin{split}
I&=\lim_{t\to 0}\left(\frac{1}{p}\int_{\Om}\frac{H(\nabla(u_0 + th))^p-H(\nabla u_0)^p}{t}\,dx+
\frac{1}{q}\int_{\Om}\mu(x)\frac{H(\nabla(u_0 +th))^q-H(\nabla u_0)^q}{t}\,dx\right)\\
&=\int_{\Om}\{H(\nabla u_0)^{p-1}\nabla_{\eta}H(\nabla u_0)+\mu(x)H(\nabla u_0)^{q-1}\nabla_{\eta}H(\nabla u_0)\}\nabla h\,dx,
\end{split}
\end{equation}
\begin{equation}\label{jest}
\begin{split}
J&=\lim_{t\to 0}\frac{\la}{\alpha+1}\int_{\Om}b(x)\frac{|u_0+th|^{\alpha+1}-|u_0|^{\alpha+1}}{t}\,dx=\frac{\la}{\alpha+1}\int_{\Om}b(x)u_0^{\alpha}h\,dx
\end{split}
\end{equation}
and
\begin{align*}
K&=\lim_{t\to 0}\frac{1}{1-\delta}\int_{\Om}a(x)\frac{|u_0 + th|^{1-\delta}-|u_0|^{1-\delta}}{t}\,dx=\int_{\Om}a(x)(u_0 +\xi  h)^{-\delta}h\,dx,
\end{align*}
where $\xi\to 0$ as $t\to 0^+$. Since $a(x)(u_0 +\xi t h)^{-\delta} h\geq 0$ in $\Om$, by Fatou's lemma it follows that
\begin{equation}\label{kest}
K\geq \int_{\Om}a(x)u_0 ^{-\delta}h\,dx.
\end{equation}
Using the estimates \eqref{iest}, \eqref{jest} and \eqref{kest} in \eqref{p372} we get 
\begin{equation}\label{p373}
\begin{split}
0&\leq\int_{\Om}\{H(\nabla u_0)^{p-1}\nabla_{\eta}H(\nabla u_0)+\mu(x)H(\nabla u_0)^{q-1}\nabla_{\eta}H(\nabla u_0)\}\nabla h\,dx\\
&\quad-\int_{\Om}(\la b(x)u_0^{\alpha}+a(x)u_0^{-\delta})h\,dx.
\end{split}
\end{equation}
Let $\phi\in W_0^{1,\mathcal{H}}(\Om)$ and we define $\psi=(u_0+\epsilon\phi)^+=\max\{u_0+\epsilon\phi,0\}$ for $\epsilon>0$. Note that $\psi\in W_0^{1,\mathcal{H}}(\Om)$ is nonnegative in $\Om$. Therefore, by putting $h=\psi$ in \eqref{p373} we obtain
\begin{equation}\label{p374}
\begin{split}
0&\leq\int_{\Om}\{H(\nabla u_0)^{p-1}\nabla_{\eta}H(\nabla u_0)+\mu(x)H(\nabla u_0)^{q-1}\nabla_{\eta}H(\nabla u_0)\}\nabla((u_0+\epsilon \phi)^+)\,dx\\
&\quad-\int_{\Om}(\la b(x)u_0^{\alpha}+a(x)u_0^{-\delta})(u_0+t \phi)^+\,dx\\
&=\int_{\{u_0 +\epsilon\phi\geq 0\}}\{H(\nabla u_0)^{p-1}\nabla_{\eta}H(\nabla u_0)+\mu(x)H(\nabla u_0)^{q-1}\nabla_{\eta}H(\nabla u_0)\}\nabla((u_0+\epsilon \phi))\,dx\\
&\quad-\int_{\{u_0 +\epsilon\phi\geq 0\}}(\la b(x)u_0^{\alpha}+a(x)u_0^{-\delta})(u_0+t \phi)\,dx\\
&=\int_{\Om}\{H(\nabla u_0)^{p-1}\nabla_{\eta}H(\nabla u_0)+\mu(x)H(\nabla u_0)^{q-1}\nabla_{\eta}H(\nabla u_0)\}\nabla((u_0+\epsilon \phi))\,dx\\
&\quad-\int_{\{u_0 +\epsilon\phi<0\}}\{H(\nabla u_0)^{p-1}\nabla_{\eta}H(\nabla u_0)+\mu(x)H(\nabla u_0)^{q-1}\nabla_{\eta}H(\nabla u_0)\}\nabla((u_0+\epsilon \phi))\,dx\\
&\quad-\int_{\Omega}(\la b(x)u_0^{\alpha}+a(x)u_0^{-\delta})(u_0+\epsilon \phi)\,dx+\int_{\{u_0 +\epsilon\phi<0\}}(\la b(x)u_0^{\alpha}+a(x)u_0^{-\delta})(u_0+\epsilon \phi)\,dx\\
&=\int_{\Om}(H(\nabla u_0)^p+\mu(x)H(\nabla u_0)^q-a(x)u_0^{1-\delta}-\la b(x)u_0^{\alpha+1})\,dx\\
&\quad+\epsilon\int_{\Om}\{H(\nabla u_0)^{p-1}\nabla_{\eta}H(\nabla u_0)+\mu(x)H(\nabla u_0)^{q-1}\nabla_{\eta}H(\nabla u_0)\}\nabla \phi\,dx\\
&\quad-\epsilon\int_{\Om}(\la b(x)u_0^{\alpha}+a(x)u_0^{-\delta})\phi\,dx\\
&\quad-\int_{\{u_0 +\epsilon\phi<0\}}\{H(\nabla u_0)^{p-1}\nabla_{\eta}H(\nabla u_0)+\mu(x)H(\nabla u_0)^{q-1}\nabla_{\eta}H(\nabla u_0)\}\nabla((u_0+\epsilon \phi))\,dx\\
&\quad+\int_{\{u_0 +\epsilon\phi<0\}}(\la b(x)u_0^{\alpha}+a(x)u_0^{-\delta})(u_0+\epsilon \phi)\,dx
\end{split}
\end{equation}
\begin{equation*}
\begin{split}
&\leq \epsilon\int_{\Om}\{H(\nabla u_0)^{p-1}\nabla_{\eta}H(\nabla u_0)+\mu(x)H(\nabla u_0)^{q-1}\nabla_{\eta}H(\nabla u_0)\}\nabla \phi\,dx\\
&\quad-\epsilon\int_{\Om}(\la b(x)u_0^{\alpha}+a(x)u_0^{-\delta})\phi\,dx\\
&\quad-\epsilon\int_{\{u_0 +\epsilon\phi<0\}}\{H(\nabla u_0)^{p-1}\nabla_{\eta}H(\nabla u_0)+\mu(x)H(\nabla u_0)^{q-1}\nabla_{\eta}H(\nabla u_0)\}\nabla\phi\,dx\\
&\quad+\la\int_{\{u_0 +\epsilon\phi<0\}}b(x)u_0^{\alpha}(u_0+\epsilon \phi)\,dx,
\end{split}
\end{equation*}
where we have used that
\begin{equation}\label{estI1p37}
\begin{split}
\int_{\Om}(H(\nabla u_0)^p+\mu(x)H(\nabla u_0)^q-a(x)u_0^{1-\delta}-\la b(x)u_0^{\alpha+1})\,dx=0,
\end{split}
\end{equation}
which holds, since $u_0\in M_\la^{+}\subset M_\la$. Moreover, we have used the nonnegativity of $a(x)$ and of $u_0$ (by Lemma \ref{mt-1}) along with
\begin{equation}\label{estI4p37}
\begin{split}
\int_{\{u_0 +\epsilon\phi<0\}}(H(\nabla u_0)^p+\mu(x)H(\nabla u_0)^q)\,dx\geq 0.
\end{split}
\end{equation}
We observe that, since $|\{u_0 +\epsilon\phi<0\}|\to 0$ as $\epsilon\to 0^{+}$, it follows that
\begin{equation}\label{estI5p37}
\lim_{\epsilon\to 0}\int_{\{u_0 +\epsilon\phi<0\}}\{H(\nabla u_0)^{p-1}\nabla_{\eta}H(\nabla u_0)+\mu(x)H(\nabla u_0)^{q-1}\nabla_{\eta}H(\nabla u_0)\}\nabla \phi\,dx=0.
\end{equation}
Now we estimate the last integral in the right hand side of \eqref{p374}, say
\[
I=\la\int_{\{u_0 +\epsilon\phi<0\}}b(x)u_0^{\alpha}(u_0+\epsilon \phi)\,dx.
\]
Notice that if $u_0+\epsilon\phi<0$, then $u_0<-\epsilon\phi$ and therefore, we have
\begin{equation}\label{estI7p37}
\begin{split}
\frac{1}{\epsilon}|I|&\leq\frac{\la}{\epsilon}\int_{\{u_0 +\epsilon\phi<0\}}|b(x)u_0^{\alpha+1}+\epsilon b(x)u_0^{\alpha}\phi|\,dx %\leq\la\epsilon^{\alpha}\int_{\{u_0 +\epsilon\phi<0\}}|b(x)||\phi|^{\alpha+1}\,dx
\leq
C\la\epsilon^{\alpha}\|b\|_{L^m(\Om)}\|\phi\|^{\alpha+1}
\end{split}
\end{equation}
for some positive constant $C$, independent of $\epsilon$. Therefore, we have
\begin{equation}\label{estI7final}
\lim_{\epsilon\to 0}\frac{1}{\epsilon}|I|=0.
\end{equation}
Combining the above estimates \eqref{estI5p37} and \eqref{estI7final} in
\eqref{p373}, we arrive at
\begin{equation}\label{p37f}
\begin{split}
0&\leq\int_{\Om}\{H(\nabla u_0)^{p-1}\nabla_{\eta}H(\nabla u_0)+\mu(x)H(\nabla u_0)^{q-1}\nabla_{\eta}H(\nabla u_0)\}\nabla \phi\,dx\\
&\quad-\int_{\Om}(\la b(x)u_0^{\alpha}+a(x)u_0^{-\delta})\phi\,dx.
\end{split}
\end{equation}
Since $\phi\in W_0^{1,\mathcal{H}}(\Om)$ is arbitrary, the equality holds in \eqref{p37f} and thus $u_0$ is a weak solution of \eqref{maineqn}. By Lemma \ref{mt-1}, we have $I_\la(u_0)<0$.

Recall from Lemma \ref{mt-1} that $u_0\geq 0$ a.e. in $\Omega$. Finally, we prove that $u_0>0$ a.e. in $\Omega$. Here we follow the arguments in the proof of \cite[Page $16$, Proposition $3.6$]{Patpos}. To this end, suppose there exists $D\subset\Omega$ such that $|D|>0$ and $u=0$ in $D$. Let $h\in W_0^{1,\mathcal{H}}(\Omega)$ be such that $h>0$ a.e. in $\Omega$. Then by Lemma \ref{p3.6} for every $t\in(0,\beta)$, we have $(u_0 +th)^{1-\delta}>u_0^{1-\delta}$ in $\Omega\setminus D$. Using this fact, we obtain for every $t\in(0,\beta)$ that
\begin{equation}\label{pos1}
\begin{split}
0&\leq\frac{I_{\la}(u_0 +th)-I_{\la}(u_0)}{t}\\
&=\frac{1}{tp}\int_{\Om}(H(\nabla(u_0 + th))^p-H(\nabla u_0)^p)\,dx+
\frac{1}{tq}\int_{\Om}\mu(x)(H(\nabla(u_0 +th))^q-H(\nabla u_0)^q)\,dx\\
&\quad-\frac{\la}{t(\alpha+1)}\int_{\Om}b(x)(|u_0+th|^{\alpha+1}-|u_0|^{\alpha+1})\,dx-\frac{1}{t(1-\delta)}\int_{\Om}a(x)((u_0 + th)^{1-\delta}-{u_0}^{1-\delta})\,dx\\
&=\frac{1}{tp}\int_{\Om}(H(\nabla(u_0 + th))^p-H(\nabla u_0)^p)\,dx+
\frac{1}{tq}\int_{\Om}\mu(x)(H(\nabla(u_0 +th))^q-H(\nabla u_0)^q)\,dx\\
&\quad-\frac{\la}{t(\alpha+1)}\int_{\Om}b(x)(|u_0+th|^{\alpha+1}-|u_0|^{\alpha+1})\,dx-\frac{1}{1-\delta}\Big\{\int_{\Omega\setminus{D}}a(x)\frac{(u_0 + th)^{1-\delta}-{u_0}^{1-\delta}}{t}\,dx\\
&\quad+t^{-\delta}\int_{D}a(x)h^{1-\delta}\,dx\Big\}\\
&\leq \frac{1}{tp}\int_{\Om}(H(\nabla(u_0 + th))^p-H(\nabla u_0)^p)\,dx+
\frac{1}{tq}\int_{\Om}\mu(x)(H(\nabla(u_0 +th))^q-H(\nabla u_0)^q)\,dx\\
&\quad-\frac{\la}{t(\alpha+1)}\int_{\Om}b(x)(|u_0+th|^{\alpha+1}-|u_0|^{\alpha+1})\,dx-\frac{t^{-\delta}}{1-\delta}\int_{D}a(x)h^{1-\delta}\,dx.\\
\end{split}
\end{equation}
Thus, we arrive at
\begin{equation}\label{pos2}
\begin{split}
0&\leq\frac{I_{\la}(u_0 +th)-I_{\la}(u_0)}{t}\to -\infty\text{ as }t\to 0^+,
\end{split}
\end{equation}
which is a contradiction. Hence, $u_0>0$ a.e. in $\Om$. This completes the proof.
\end{proof}
}

\begin{Lemma}\label{M-neg}
There exists $\Lambda_0>0$ such that $\inf_{M_\la^-}I_\la> 0$ whenever $\la \in(0,\Lambda_0)$.
\end{Lemma}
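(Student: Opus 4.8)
The plan is to exploit the defining inequality $\phi_u''(1)<0$ of $M_\lambda^-$ to force every $u\in M_\lambda^-$ to have \emph{large} norm once $\lambda$ is small, and then to read off positivity of $I_\lambda$ from the reduced expression of the energy on the Nehari set. Write $P:=\int_\Om H(\nabla u)^p\,dx$, $Q:=\int_\Om \mu(x)H(\nabla u)^q\,dx$, $A:=\int_\Om a(x)|u|^{1-\delta}\,dx\ge 0$, $B:=\int_\Om b(x)|u|^{\alpha+1}\,dx$, and $\varrho_{\mathcal H}(\nabla u):=P+Q$. Since $\phi_u'(1)=0$, the same computation as in Lemma \ref{cbb} gives
\begin{equation*}
I_\lambda(u)=\Big(\tfrac1p-\tfrac1{\alpha+1}\Big)P+\Big(\tfrac1q-\tfrac1{\alpha+1}\Big)Q-\Big(\tfrac1{1-\delta}-\tfrac1{\alpha+1}\Big)A,
\end{equation*}
where all three bracketed constants are positive because $1-\delta<p<q<\alpha+1$. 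Since the coefficient of $P$ exceeds that of $Q$, replacing the former by the latter and using the embedding estimate $A\le c\|u\|^{1-\delta}$ from Lemma \ref{cbb} yields
\begin{equation*}
I_\lambda(u)\ge \Big(\tfrac1q-\tfrac1{\alpha+1}\Big)\varrho_{\mathcal H}(\nabla u)-\Big(\tfrac1{1-\delta}-\tfrac1{\alpha+1}\Big)c\,\|u\|^{1-\delta}.
\end{equation*}

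The crucial step is the norm bound. From $\phi_u''(1)<0$, i.e.\ $(p-1)P+(q-1)Q+\delta A<\lambda\alpha B$, together with $\delta A\ge 0$ and $(p-1)Q\le(q-1)Q$, I obtain $(p-1)\varrho_{\mathcal H}(\nabla u)<\lambda\alpha B$; in particular $B>0$, and by H\"older with $b\in L^{\frac{p^*}{p^*-\alpha-1}}(\Om)$ combined with $\V\hookrightarrow L^{p^*}(\Om)$ one has $B\le C\|u\|^{\alpha+1}$, so $(p-1)\varrho_{\mathcal H}(\nabla u)<\lambda\alpha C\|u\|^{\alpha+1}$. Splitting according to Proposition \ref{proposition_modular_properties}: if $\|u\|\ge 1$ then $\varrho_{\mathcal H}(\nabla u)\ge\|u\|^p$, giving $\|u\|^{\alpha+1-p}>\frac{p-1}{\lambda\alpha C}$, whereas if $\|u\|<1$ then $\varrho_{\mathcal H}(\nabla u)\ge\|u\|^q$, giving $\|u\|^{\alpha+1-q}>\frac{p-1}{\lambda\alpha C}$. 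Since $\alpha+1>q>p$, for $\lambda$ small the right-hand side exceeds $1$, so the second alternative is impossible and one is left with $\|u\|\ge c(\lambda):=\big(\frac{p-1}{\lambda\alpha C}\big)^{1/(\alpha+1-p)}$, where $c(\lambda)\to\infty$ as $\lambda\to 0^+$.

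Finally, since $\|u\|\ge 1$, the second display gives $I_\lambda(u)\ge g(\|u\|)$ with $g(s):=C_1 s^p-C_2 s^{1-\delta}$ and $C_1,C_2>0$. As $g(s)=s^{1-\delta}\big(C_1 s^{p-1+\delta}-C_2\big)$ is eventually positive and increasing, and every $u\in M_\lambda^-$ satisfies $\|u\|\ge c(\lambda)$ with $c(\lambda)\to\infty$, I fix $\Lambda_0>0$ so small that for $\lambda\in(0,\Lambda_0)$ both alternatives above are decided as claimed and $c(\lambda)$ lies in the increasing regime of $g$; then $\inf_{M_\lambda^-}I_\lambda\ge g(c(\lambda))>0$. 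The main obstacle is precisely this norm lower bound: substituting the Nehari constraint directly into $I_\lambda$ produces a \emph{negative} coefficient in front of $P$, so positivity cannot be seen pointwise and must instead be extracted from the fact that membership in $M_\lambda^-$ forces $\|u\|$ to blow up as $\lambda\to 0$ — the two-case modular argument being the delicate point that also dictates the choice of $\Lambda_0$.
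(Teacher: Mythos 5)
Your proof is correct, and while it extracts the same key estimate from $\phi_u''(1)<0$ as the paper does --- namely that H\"older, the Sobolev embedding and the modular inequalities of Proposition \ref{proposition_modular_properties} force $\|u\|\gtrsim \lambda^{-1/(\alpha+1-p)}$ for every $u\in M_\lambda^-$ --- the way you convert this into positivity of the energy is genuinely different. The paper argues by contradiction: assuming $I_\lambda(u)\le 0$ and using $\phi_u'(1)=0$ it derives an \emph{upper} bound on $\int_\Om H(\nabla u)^p\,dx$ in terms of $\int_\Om a(x)|u|^{1-\delta}\,dx$ and $\int_\Om b(x)|u|^{\alpha+1}\,dx$, and then lets $\lambda\to 0^+$ to contradict the lower bound; that final limit passage is somewhat delicate because the function $u=u_\lambda$, and hence the integrals on the right-hand side, vary with $\lambda$. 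You instead feed the norm lower bound into the coercivity estimate already established in Lemma \ref{cbb}, $I_\lambda(u)\ge C_1\|u\|^p-C_2\|u\|^{1-\delta}$ on $M_\lambda$ with $\|u\|>1$, and observe that the right-hand side is positive and increasing once $\|u\|\ge c(\lambda)$ with $c(\lambda)\to\infty$. This is more quantitative (it even yields $\inf_{M_\lambda^-}I_\lambda\ge g(c(\lambda))\to\infty$ as $\lambda\to 0^+$) and sidesteps the limit argument entirely; your two-case treatment of the modular ($\|u\|\ge 1$ versus $\|u\|<1$) is exactly the point where $\Lambda_0$ gets fixed, and it is handled correctly. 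One small slip in your closing commentary: the ``negative coefficient in front of $P$'' should read ``in front of $A$'' --- the coefficient of $P$ in the reduced energy is $\tfrac1p-\tfrac1{\alpha+1}>0$, as you yourself note earlier; this does not affect the argument.
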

\begin{proof}
{Lemma \ref{t-1,t-2} says that $M_\la^-\neq \emptyset$ for $\la \in (0,\la_*)$, so let $u\in M_\la^-$ and $\la\in (0,\lambda_*)$. On contrary, we also assume that $u\in M_\la^-$ is such that $I_\la(u)\leq 0$. } Since $\phi_u^\prime(1)=0$, we get 
\begin{align}\label{M-neg-1}
    \int_\Om H(\nabla u)^p~dx \leq \frac{pq}{q-p} \left[\frac{q-1+\delta}{q(1-\delta)}\int_\Om a(x)|u|^{1-\delta}~dx +\la \frac{\alpha+1-q}{q(\alpha+1)}\int_\Om b(x)|u|^{\alpha +1}~dx\right].
\end{align}
Now using $\phi_u^\prime(1)=0$ and $\phi_u^{\prime \prime}(1)<0$, we obtain
\begin{align*}
    &\int_\Om (p-1+\delta)H(\nabla u)^p~dx+(q-1+\delta)\int_\Om \mu(x)H(\nabla u)^q~dx\\
    &< \la (\alpha+\delta)\int_\Om b(x)|u|^{\alpha+1}~dx \leq \la(\alpha+\delta) |b|_{\frac{p^*}{p^*-\alpha-1}}|u|_{p^*}^{\alpha+1}\\
    & \leq \la(\alpha+\delta) |b|_{\frac{p^*}{p^*-\alpha-1}} \left(S\int_\Om |\nabla u|^p~dx\right)^{\frac{\alpha+1}{p}} \\
    &\leq \la(\alpha+\delta) |b|_{\frac{p^*}{p^*-\alpha-1}} (c_2S)^{\frac{\alpha+1}{p}}\left(\int_\Om H(\nabla u)^p~dx\right)^{\frac{\alpha+1}{p}}.
\end{align*}
This gives 
\begin{align}\label{M-neg-2}
    \int_\Om H(\nabla u)^p~dx > \left(\frac{p-1+\delta}{\la (\alpha+\delta)(c_2S)^{\frac{\alpha+1}{p}}  |b|_{\frac{p^*}{p^*-\alpha-1}} }\right)^{\frac{p}{\alpha+1-p}}.
\end{align}
Using \eqref{M-neg-2} in \eqref{M-neg-1}, we get 
\begin{align*}
   \frac{1}{\la^{\frac{\alpha+1}{\alpha+1-p}}}\left(\frac{p-1+\delta}{ (\alpha+\delta)(c_2S)^{\frac{\alpha+1}{p}}  |b|_{\frac{p^*}{p^*-\alpha-1}} }\right)^{\frac{p}{\alpha+1-p}}&< \frac{pq}{q-p} \left[\frac{q-1+\delta}{\la q(1-\delta)}\int_\Om a(x)|u|^{1-\delta}~dx\right.\\
   &\quad \quad \left.+ \frac{\alpha+1-q}{q(\alpha+1)}\int_\Om b(x)|u|^{\alpha +1}~dx\right].
\end{align*}
Since $\alpha+1>p$, passing on the limits $\lambda \to 0^+$ in above, we arrive at 
\[+\infty\leq \frac{p(\alpha+1-q)}{(q-p)(\alpha+1)}\int_\Om b(x)|u|^{\alpha +1}~dx,\]
which is absurd. Hence there must exist a $\Lambda_0\in (0, \la_*]$ such that $\inf_{M_\la^-}I_\la\geq 0$ when $\la \in (0,\Lambda_0).$
\end{proof}

%\begin{Lemma}\label{M-closed}
%$M_\la^-$ is closed in $W^{1,\mc H}_0(\Om)$ topology.
%\end{Lemma}

\begin{Lemma}\label{mt-2}
For any $\la \in (0,\Lambda_0)$, there exists a $v_0 \in M_\la^-$ such that $v_0\geq 0 $ a.e. in $\Om$ and 
\[0<m_\la^-:=\inf_{M_\la^-} I_\la= I_\la(v_0).\]
\end{Lemma}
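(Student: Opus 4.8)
The plan is to run the direct method on the branch $M_\la^-$, mirroring the proof of Lemma \ref{mt-1} but compensating for the fact that $M_\la^-$ is cut out by the \emph{open} condition $\phi_u''(1)<0$ and hence is not weakly closed. First I would fix $\la\in(0,\Lambda_0)$, where I take $\Lambda_0\le\la_0$ (shrinking the constant of Lemma \ref{M-neg} if necessary so that $M_\la^0=\emptyset$ by Lemma \ref{Nest0emp}; recall $\Lambda_0\in(0,\la_*]$ already, so Lemma \ref{t-1,t-2} applies). Pick a minimizing sequence $\{u_n\}\subset M_\la^-$ with $I_\la(u_n)\to m_\la^-$. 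Since $I_\la(u)=I_\la(|u|)$ and $|u|\in M_\la^-$ whenever $u\in M_\la^-$ (because $H(\nabla|u|)=H(\nabla u)$ a.e. by (A4)), I may assume $u_n\ge 0$. By Lemma \ref{cbb} the sequence is bounded, so up to a subsequence $u_n\rightharpoonup v_0$ in $\V$, $u_n\to v_0$ in $L^r(\Om)$ for $r\in(p,p^*)$ and a.e., whence $v_0\ge 0$; the Vitali argument of Lemma \ref{mt-1} (see \eqref{mt-1-1} and \eqref{mt-1-2}) yields $A_n\to A_0:=\int_\Om a|v_0|^{1-\delta}\,dx$ and $B_n\to B_0:=\int_\Om b|v_0|^{\alpha+1}\,dx$.

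The nonvanishing step exploits the uniform lower bound built into Lemma \ref{M-neg}: every $u\in M_\la^-$ satisfies \eqref{M-neg-2}, so $\int_\Om H(\nabla u_n)^p\,dx\ge C_0>0$ with $C_0$ independent of $n$. Combining $\phi_{u_n}'(1)=0$ with $\phi_{u_n}''(1)<0$ gives $\la(\alpha+\delta)B_n>(p-1+\delta)\int_\Om H(\nabla u_n)^p\,dx\ge(p-1+\delta)C_0$, so $B_n$ is bounded below by a positive constant and therefore $B_0>0$; in particular $v_0\neq 0$. Since $B_0>0$, Lemma \ref{t-1,t-2}(2) supplies a unique $t_2=t_2(v_0)>t_{\max}$ with $t_2v_0\in M_\la^-$, so that $m_\la^-\le I_\la(t_2v_0)=\phi_{v_0}(t_2)$.

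Next I would prove strong convergence $u_n\to v_0$ in $\V$ by contradiction, which is the crux. The shape of $\phi_{u_n}$ (recorded by $\phi_{u_n}'(t)=t^\alpha(\sigma_{u_n}(t)-\la B_n)$ with $B_n>0$: decreasing, then increasing, then decreasing, with $\phi_{u_n}(0^+)=0$ and $I_\la(u_n)>0$ by Lemma \ref{M-neg}) shows $I_\la(u_n)=\phi_{u_n}(1)=\max_{t>0}\phi_{u_n}(t)$, so in particular $\phi_{u_n}(t_2)\le I_\la(u_n)$. Passing to a further subsequence so that $\int_\Om H(\nabla u_n)^p\,dx\to\ell_p$ and $\int_\Om\mu(x)H(\nabla u_n)^q\,dx\to\ell_q$, weak lower semicontinuity gives $\ell_p\ge\int_\Om H(\nabla v_0)^p\,dx$ and $\ell_q\ge\int_\Om\mu(x)H(\nabla v_0)^q\,dx$; if $u_n\not\to v_0$ strongly then at least one inequality is strict, and since $t_2^p/p,\,t_2^q/q>0$ while $A_n\to A_0$, $B_n\to B_0$, this forces $\phi_{v_0}(t_2)<\lim_n\phi_{u_n}(t_2)\le\lim_n I_\la(u_n)=m_\la^-$, contradicting $m_\la^-\le\phi_{v_0}(t_2)$. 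Hence both gradient modulars converge, so $\rho_{\mc H}(\nabla u_n)\to\rho_{\mc H}(\nabla v_0)$, and strong convergence follows from \cite[Proposition 2.2-(v)]{win} exactly as in Lemma \ref{mt-1}.

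Finally, strong convergence gives $I_\la(v_0)=m_\la^-$, $\phi_{v_0}'(1)=\lim_n\phi_{u_n}'(1)=0$ so $v_0\in M_\la$, and $\phi_{v_0}''(1)=\lim_n\phi_{u_n}''(1)\le 0$; since $M_\la^0=\emptyset$ for $\la<\la_0$, the case $\phi_{v_0}''(1)=0$ is excluded, whence $\phi_{v_0}''(1)<0$ and $v_0\in M_\la^-$. Thus the infimum is attained at $v_0\ge 0$, and because $v_0\in M_\la^-$ Lemma \ref{M-neg} forces $I_\la(v_0)>0$, giving $0<m_\la^-=I_\la(v_0)$. The main obstacle is precisely the strong-convergence step: unlike the $M_\la^+$ case, the weak limit could a priori slip onto the boundary $M_\la^0$ or into $M_\la^+$, and the only leverage is the Nehari projection $t_2v_0\in M_\la^-$ together with the maximality $\phi_{u_n}(1)=\max_{t>0}\phi_{u_n}(t)$, which converts a would-be strict lower-semicontinuity gap into a contradiction with minimality.
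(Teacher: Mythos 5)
Your proof is correct, and its skeleton (minimizing sequence, boundedness via Lemma \ref{cbb}, weak limit, nontriviality, Nehari projection via Lemma \ref{t-1,t-2}, strong convergence by contradiction, exclusion of $M_\la^0$) is the same as the paper's, which simply instructs the reader to ``follow the exact arguments in the proof of Lemma \ref{mt-1}.'' Where you genuinely depart is in the two places where that instruction cannot be taken literally, and your repairs are the right ones. First, the nonvanishing of the weak limit: in Lemma \ref{mt-1} this comes from $I_\la(u_0)\le m_\la^+<0=I_\la(0)$, which is unavailable here since $m_\la^->0$; your replacement — the lower bound \eqref{M-neg-2} (which indeed holds for every $u\in M_\la^-$, not just under the contradiction hypothesis of Lemma \ref{M-neg}) combined with $\phi_{u_n}'(1)=0$, $\phi_{u_n}''(1)<0$ to force $B_n\ge c>0$ and hence $B_0>0$ — is a clean and necessary substitute, and it has the added benefit of guaranteeing that case (2) of Lemma \ref{t-1,t-2} applies to $v_0$. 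Second, the strong-convergence contradiction: instead of the paper's argument via $\phi_{u_n}'(t_1)$ and monotonicity on $(0,t_1]$, you exploit that on the $M_\la^-$ branch $\phi_{u_n}(1)=\max_{t>0}\phi_{u_n}(t)$ (valid since $B_n>0$ and $\phi_{u_n}(0^+)=0$ with $\phi_{u_n}(1)>0$), so a strict lower-semicontinuity gap at $t_2=t_2(v_0)$ contradicts $m_\la^-\le\phi_{v_0}(t_2)$. Both routes buy the same conclusion; yours is the more honest account of what actually has to change on the $M_\la^-$ branch, and your explicit shrinking of $\Lambda_0$ below $\la_0$ to rule out $\phi_{v_0}''(1)=0$ addresses a detail the paper leaves implicit.
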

\begin{proof}
Let us consider $\{v_n\}_{n\in\mathbb{N}}\subset M_\la^-$ as a minimizing sequence for $m_\la^-$, that is 
\[I_\la(v_n)\to m_\la^->0\;\text{as}\; n\to \infty.\]
By Lemma \ref{cbb}, we get that $\{v_n\}_{n\in\mathbb{N}}$ is bounded in $W^{1,\mc H}_0(\Om)$ which implies that there exists a $v_0\in W^{1,\mc H}_0(\Omega)$ such that upto a subsequence
\begin{equation}
    v_n \rightharpoonup v_0 \; \text{in}\; W^{1,\mc H}_0(\Om)\;\text{and}\; v_n \to v_0 \;\text{in} \; L^r(\Om),\;\text{for}\;r\in (p,p^*).
\end{equation}
Following proof of Lemma \ref{mt-1}, we will get $v_0\neq 0$ map over $\Om$, so Lemma \ref{t-1,t-2} will give us that there exists a $t_2>0$ such that $t_2 v_0\in M_\la^-$. Now, following the exact arguments in the proof of Lemma \ref{mt-1}, we obtain the existence of $v_0 \in M_\la^-$ such that $v_0\geq 0$ a.e. in $\Om$ and 
\[0<m_\la^-:=\inf_{M_\la^-} I_\la= I_\la(v_0).\]
\end{proof}

\begin{Lemma}\label{sec-weak-sol}
Let $\lambda\in(0,\Lambda_0)$ and $v_0\in M_\la^{-}$ be as given by Lemma \ref{mt-2}. Then $v_0$ is a weak solution of \eqref{maineqn} such that $I_\la(v_0)>0$.
\end{Lemma}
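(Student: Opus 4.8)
The claim $I_\la(v_0)>0$ is already contained in Lemma \ref{mt-2}, where $m_\la^-=I_\la(v_0)>0$ was established, so the entire content of the lemma is to show that $v_0$ solves \eqref{maineqn} weakly. The plan is to reach the one-sided variational inequality \eqref{p373} (with $v_0$ in place of $u_0$) for every nonnegative $h\in W_0^{1,\mathcal{H}}(\Om)$, and then to run \emph{verbatim} the cutoff test-function argument of Lemma \ref{p37}: inserting $h=(v_0+\epsilon\phi)^+$, using the membership $v_0\in M_\la$ for the cancellation \eqref{estI1p37}, the nonnegativity of $a$ and of $v_0$, the estimates \eqref{estI5p37} and \eqref{estI7final}, and $|\{v_0+\epsilon\phi<0\}|\to 0$, to upgrade the inequality to the weak formulation \eqref{p37f} for arbitrary $\phi$.

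The point where the $M_\la^-$ case genuinely differs from Lemma \ref{p37} is the derivation of \eqref{p373}. There we relied on Lemma \ref{p3.6}, i.e. on $I_\la(v_0)\le I_\la(v_0+th)$ for small $t>0$; this used that $u_0\in M_\la^+$ is a \emph{local minimum} of its fibering map, so that the projected point satisfies $\phi_{u_0+th}(v(th))\le\phi_{u_0+th}(1)$. For $v_0\in M_\la^-$ the fibering map has a local \emph{maximum} at $t=1$, the inequality reverses, and the monotonicity of Lemma \ref{p3.6} is no longer available. Instead I would argue along the projected curve: for fixed $h$ let $\theta(\cdot)$ be the continuous map produced by Lemma \ref{lem3.5} applied to $M_\la^-$ at $v_0$ (so $\theta(0)=1$ and $\theta(th)(v_0+th)\in M_\la^-$ for small $t$), and set $\gamma(t):=\theta(th)(v_0+th)$, a curve in $M_\la^-$ with $\gamma(0)=v_0$. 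Since $v_0$ minimizes $I_\la$ over $M_\la^-$, the scalar function $t\mapsto I_\la(\gamma(t))$ attains its minimum at $t=0$, whence
\[\frac{I_\la(\gamma(t))-I_\la(v_0)}{t}\ge 0,\qquad t>0\ \text{small}.\]
Writing $I_\la(\gamma(t))=I_\la\big(\theta(th)(v_0+th)\big)$ and noting that the partial derivative in the scaling variable at $(s,t)=(1,0)$ equals $\phi_{v_0}'(1)=0$ because $v_0\in M_\la$, the rate $\tfrac{d}{dt}\theta(th)$ drops out of the right-hand derivative. Hence, after dividing by $t$ and letting $t\to0^+$, the difference quotient reduces to the same expression $I-J-K$ as in \eqref{p372}, where $I$ and $J$ are the smooth directional derivatives \eqref{iest} and \eqref{jest} and $K$ is the singular term; Fatou's lemma again gives $K\ge\int_\Om a(x)v_0^{-\delta}h\,dx$ as in \eqref{kest}, and therefore \eqref{p373} holds for all $h\ge 0$.

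The main obstacle is precisely the passage to the limit in the singular integral along the \emph{scaled} curve $\gamma$. Unlike in Lemma \ref{p37}, where one perturbs along the straight ray $u_0+th$ and the integrand $a(x)|u_0+th|^{1-\delta}$ is monotone in $t$ on $\{v_0\ge 0\}$, here the extra factor $\theta(th)$ (which need not satisfy $\theta(th)\ge1$) can spoil the monotonicity needed to apply Fatou. To handle this I would exploit the regularity of $\theta$ coming from the implicit function theorem in Lemma \ref{lem3.5}: since $\theta(th)=1+O(t)$, the scaling contribution to $\int_\Om a(x)|\gamma(t)|^{1-\delta}\,dx$ is $O(t)$ and thus disappears after dividing by $t$, so that only the genuine $h$-perturbation survives and the one-sided Fatou bound is recovered exactly. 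With \eqref{p373} established, the cutoff argument of Lemma \ref{p37} shows that $v_0$ is a weak solution, and the positivity $v_0>0$ a.e. in $\Om$ follows by the identical contradiction \eqref{pos1}--\eqref{pos2}: if $v_0=0$ on a set $D$ of positive measure, the term $t^{-\delta}\int_D a(x)h^{1-\delta}\,dx$ forces $\frac{I_\la(v_0+th)-I_\la(v_0)}{t}\to-\infty$, contradicting the lower bound. Finally $I_\la(v_0)=m_\la^->0$ by Lemma \ref{mt-2}.
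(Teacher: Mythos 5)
Your proposal is correct and follows essentially the same route as the paper: the paper's own proof of Lemma \ref{sec-weak-sol} is a two-line reference that invokes Lemma \ref{lem3.5} to produce the projection $\zeta(th)(v_0+th)\in M_\la^-$, uses minimality of $v_0$ over $M_\la^-$ to get $I_\la(v_0)\le I_\la(\zeta(th)(v_0+th))$, and then asks the reader to replicate Lemmas \ref{p3.6} and \ref{p37} --- which is precisely the scaled-curve differentiation you carry out, followed by the same cutoff and positivity arguments. You in fact supply the one detail the paper glosses over, namely that the monotonicity device of Lemma \ref{p3.6} is unavailable at a fibering-map maximum and the $\tfrac{d}{dt}\theta(th)$ contribution must instead be cancelled using $\phi_{v_0}'(1)=0$; only note that the scaling contribution to the singular integral alone is genuinely $O(t)$ rather than $o(t)$, so it is this cancellation across all three terms of $I_\la$, not the division by $t$, that removes it.
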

\begin{proof}
We follow similar arguments as in Lemma \ref{p37}. Using Lemma \ref{lem3.5} and reproducing same arguments as in Lemma \ref{p3.6}, we assert that there exists a $\epsilon>0$ and a continuous map $\zeta: B_\epsilon(0)\to(0,
\infty)$ such that
\[\zeta(th)\to 1 \;\text{as}\; t\to 0^+,\;\; \zeta(th)(v_0+th)\in M_\la^-\;\text{and}\; I_\la(v_0)\leq I_{\la}(\zeta(th)(v_0+th))\]
for all $t$ such that $th\in B_\epsilon(0)$. Now one can replicate the arguments used in Lemma \ref{p37} to prove that $v_0$ is a weak solution of \eqref{maineqn} whereas it remains to show that $v_0>0$ a.e. in $\Om$ { which can be proved in a similar way as we proved $u_0>0$ a.e. in $\Omega$.}
\end{proof}
\textbf{Proof of Theorem \ref{scthm}:} Let $(\Lambda_*,u_0)$ and $(\Lambda_0,v_0)$ are given by Lemma \ref{p37} and Lemma \ref{sec-weak-sol} respectively. We define $\Lambda=\min\{\Lambda_*,\Lambda_0\}$, then $u_0$ is a negative energy solution of \eqref{maineqn} by Lemma \ref{p37} and $v_0$ is a positive energy solution of \eqref{maineqn} by Lemma \ref{sec-weak-sol} respectively. 

\subsection{Critical case}
We consider \eqref{maineqn} with the following assumptions on $a$ and $b$- {Here $\alpha+1=p^*$.}
\begin{enumerate}
    \item[(H3)] $a>0$ a.e. in $\Om$ and $a\in L^{\frac{p^*}{p^*-1+\delta}}(\Om)$,
    \item[(H4)] $b$ is possibly sign-changing a.e. in $\Om$ with $b^+\not\equiv 0$ and $b\in L^\infty(\Om)$.
\end{enumerate}
Define the best constant as follows 
\[S_1= \inf_{u\in W^{1,p}_0(\Om)\setminus \{0\}} \frac{|\nabla u|_p^p}{|u|_{p^*}^p}.\]
{Let us fix the constants $c,C>0$ which satisfies the following
\begin{equation}\label{new}
    c\int_\Om H(\nabla u)^p~dx \leq \int_\Om |\nabla u|^p~dx \leq C\int_\Om H(\nabla u)^p~dx. 
\end{equation}}
\begin{Proposition}\label{gap}
There exists a $\Lambda^*>0$ such that $M_\la$ possesses a gap structure when $\la \in (0,\Lambda^*)$ i.e. for some $D_1,D_2>0$,
\[\int_\Om H(\nabla U)^p~dx \geq D_1>D_2 \geq \int_\Om( H(\nabla u)^p+\mu(x) H(\nabla u)^q)~dx,\;\forall \; u\in M_\la^+,\; U\in M_\la^-.\]
\end{Proposition}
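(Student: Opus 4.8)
The plan is to bound the modular of elements of $M_\la^+$ from above by a constant independent of $\la$, and to bound $\int_\Om H(\nabla U)^p\,dx$ from below for $U\in M_\la^-$ by a quantity that blows up as $\la\to 0^+$; the gap is then forced by shrinking $\la$. Throughout I write $P=\int_\Om H(\nabla u)^p\,dx$, $Q=\int_\Om\mu(x)H(\nabla u)^q\,dx$, $A=\int_\Om a(x)|u|^{1-\delta}\,dx$ and $B=\int_\Om b(x)|u|^{\alpha+1}\,dx$, and note that $P>0$ whenever $u\neq 0$ since $H(\nabla u)\equiv 0$ forces $u\equiv 0$ in $\V$.

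\textbf{Upper bound on $M_\la^+$.} For $u\in M_\la^+$ I would combine $\phi_u'(1)=0$ and $\phi_u''(1)>0$ exactly as in the derivation of \eqref{p5}: substituting $\la B=P+Q-A$ into the strict inequality $(p-1)P+(q-1)Q+\delta A>\la\alpha B$ yields
\[(\alpha+1-p)P+(\alpha+1-q)Q<(\alpha+\delta)A.\]
Since $p<q<\alpha+1$, both coefficients are positive and $\alpha+1-q$ is the smaller, so the left-hand side dominates $(\alpha+1-q)(P+Q)$. Estimating $A$ by H\"older's inequality using \textnormal{(H3)} together with the critical Sobolev inequality and the norm comparison \eqref{new}, namely $A\le |a|_{\frac{p^*}{p^*-1+\delta}}|u|_{p^*}^{1-\delta}\le |a|_{\frac{p^*}{p^*-1+\delta}}(C/S_1)^{\frac{1-\delta}{p}}P^{\frac{1-\delta}{p}}$, and using $P\le P+Q$, I would isolate $P+Q$. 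As the resulting exponent $(1-\delta)/p$ is strictly less than $1$, dividing gives an upper bound
\[P+Q<D_2\]
with $D_2$ depending only on $a,\delta,p,q,\alpha,S_1,C$ and, crucially, \emph{not} on $\la$.

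\textbf{Lower bound on $M_\la^-$.} For $U\in M_\la^-$ I would use $\phi_U''(1)<0$ directly. Discarding the nonnegative terms $(q-1)\int_\Om\mu(x)H(\nabla U)^q\,dx$ and $\delta\int_\Om a(x)|U|^{1-\delta}\,dx$ leaves
\[(p-1)\int_\Om H(\nabla U)^p\,dx<\la\alpha\int_\Om b(x)|U|^{\alpha+1}\,dx\le \la\alpha\|b\|_{L^\infty(\Om)}|U|_{p^*}^{p^*},\]
using \textnormal{(H4)} and $\alpha+1=p^*$. By the Sobolev inequality and \eqref{new}, $|U|_{p^*}^{p^*}\le (C/S_1)^{p^*/p}X^{p^*/p}$ where $X=\int_\Om H(\nabla U)^p\,dx>0$. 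Thus $(p-1)X<\la\alpha\|b\|_{L^\infty(\Om)}(C/S_1)^{p^*/p}X^{p^*/p}$, and since $p^*/p>1$ I may divide by $X$ to obtain
\[\int_\Om H(\nabla U)^p\,dx>D_1(\la):=\left(\frac{p-1}{\la\alpha\|b\|_{L^\infty(\Om)}(C/S_1)^{p^*/p}}\right)^{\frac{p}{p^*-p}},\]
which tends to $+\infty$ as $\la\to 0^+$.

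\textbf{Conclusion and main obstacle.} Since $D_2$ is fixed while $D_1(\la)\to+\infty$, there exists $\Lambda^*>0$ with $D_1(\la)>D_2$ for all $\la\in(0,\Lambda^*)$, giving the asserted gap. The crux of the argument is the asymmetric dependence on $\la$: criticality $\alpha+1=p^*$ forces the $b$-term to enter the $M_\la^-$ estimate through the Sobolev inequality with exponent $p^*/p>1$, which is precisely what makes the lower bound explode as $\la\to 0$, whereas the singular $a$-term enters the $M_\la^+$ estimate with exponent $(1-\delta)/p<1$, keeping that bound $\la$-free. The only care needed is in verifying $P>0$ (immediate since $u\neq 0$) so the divisions are legitimate, and in tracking that $\alpha+1-q>0$ so that the coefficient extracted in the upper bound is strictly positive.
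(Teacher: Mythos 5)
Your proposal is correct and follows essentially the same route as the paper: the same inequality $(p^*-p)P+(p^*-q)Q<(p^*-1+\delta)A$ from $\phi_u'(1)=0$ and $\phi_u''(1)>0$ for the $\la$-independent upper bound on $M_\la^+$, and the same Sobolev estimate of the $b$-term with exponent $p^*/p>1$ forcing $\int_\Om H(\nabla U)^p\,dx\gtrsim\la^{-p/(p^*-p)}$ on $M_\la^-$. The only cosmetic differences are that you bound $P+Q$ in one step via the smaller coefficient $p^*-q$ (the paper bounds $P$ first, then back-substitutes for $Q$) and that you use $\phi_U''(1)<0$ alone, discarding $\delta A$, rather than the paper's combination yielding the $(p-1+\delta)$ coefficient; neither change affects the argument.
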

\begin{proof}
Let $u\in M_\la^+$ then $\phi_u^\prime(1)=0$ and $\phi_u^{\prime\prime}(1)>0$ gives us that 
\begin{align*}
    &\int_{\Om} \left((p^*-p)H(\nabla u)^p + (p^*-q){\color{red}\mu(x)}H(\nabla u)^q\right)~dx  < (p^*-1+\delta)\int_\Om a(x) |u|^{1-\delta}~dx\\
    & < (p^*-1+\delta) |a|_{\frac{p^*}{p^*-1+\delta}} |u|_{p^*}^{1-\delta} < (p^*-1+\delta) |a|_{\frac{p^*}{p^*-1+\delta}} S_{1}^{-\frac{1-\delta}{p}}|\nabla u|_p^{1-\delta}\\
    & \leq (p^*-1+\delta) |a|_{\frac{p^*}{p^*-1+\delta}} (S_{1}^{-1}C)^{\frac{1-\delta}{p}}\left(\int_\Om H(\nabla u)^p~dx\right)^{\frac{1-\delta}{p}}, \; \text{using}\; \eqref{new}.
\end{align*}
From this, it follows that 
\begin{equation}\label{gap-1}
    \int_\Om H(\nabla u)^p~dx< \left[\frac{p^*-1+\delta}{p^*-p} |a|_{\frac{p^*}{p^*-1+\delta}} (S_{1}^{-1}C)^{\frac{1-\delta}{p}}\right]^{\frac{p}{p-1+\delta}}:=A_1(say).
\end{equation}
Putting this in the later equation, we obtain
\begin{align}
     &\int_\Om \mu(x) H(\nabla u)^q~dx\nonumber\\ \label{gap-2}
     &<  \left[\frac{p^*-1+\delta}{p^*-q} |a|_{\frac{p^*}{p^*-1+\delta}} (S_1^{-1}C)^{\frac{1-\delta}{p}}\right] \left[\frac{p^*-1+\delta}{p^*-p} |a|_{\frac{p^*}{p^*-1+\delta}} (S_1^{-1}C)^{\frac{1-\delta}{p}}\right]^{\frac{1-\delta}{p-1+\delta}}\\ \nonumber
     &=\left[(p^*-1+\delta) |a|_{\frac{p^*}{p^*-1+\delta}} (S_1^{-1}C)^{\frac{1-\delta}{p}}\right]^{\frac{p}{p-1+\delta}}\frac{1}{(p^*-q)(p^*-p)^{\frac{1-\delta}{p-1+\delta}}}:=A_2(say)\nonumber
\end{align}
Combining \eqref{gap-1} and \eqref{gap-2} and taking $D_2=A_1+A_2$, we get 
\[\int_\Om( H(\nabla u)^p+\mu(x) H(\nabla u)^q)~dx\leq D_2,\; \forall \; u\in M_\la^+.\]
Now let $U\in M_\la^-$ then $\phi_U^\prime(1)=0$ and $\phi_U^{\prime\prime}(1)<0$ gives us that 
\begin{align*}
    &\int_{\Om} \left((p-1+\delta)H(\nabla U)^p + (q-1+\delta){\color{red}\mu(x)}H(\nabla U)^q\right)~dx\\
    &< \la(p^*-1+\delta)\int_\Om b(x) |U|^{p^*}~dx < \la (p^*-1+\delta) |b|_\infty |U|_{p^*}^{p^*}\\
    & < \la (p^*-1+\delta) |b|_\infty S_1^{-\frac{p^*}{p}}|\nabla U|_p^{p^*} \leq \la (p^*-1+\delta) |b|_\infty (S_1^{-1}C)^{\frac{p^*}{p}} \left(\int_\Om H(\nabla U)^p~dx\right)^{\frac{p^*}{p}}.
\end{align*}
This easily suggests that 
\begin{equation}
    \int_\Om H(\nabla U)^p~dx \geq \left(\frac{(S_1^{-1}C)^{\frac{p^*}{p}}}{\la |b|_{\infty}}\left(\frac{p-1+\delta}{p^*-1+\delta}\right)\right)^{\frac{p}{p^*-p}}:=D_1 (say)
\end{equation}
and hence 
\[\int_\Om H(\nabla U)^p~dx \geq D_1,\;\forall \; U\in M_\la^-.\]
Observing that $D_1$ depends on $\la $, if we choose 
\[0<\la< \Lambda^* := \frac{\frac{(S_1^{-1}C)^{\frac{p^*}{p}}}{ |b|_{\infty}}\left(\frac{p-1+\delta}{p^*-1+\delta}\right)}{D_2^{\frac{p^*-p}{p}}}\]
then $D_1>D_2$ and this completes the proof.
\end{proof}

\begin{Corollary}
When $\la \in (0,\Lambda^*)$ then $M_\la^-$ is closed in $W^{1,\mc H}_0(\Om)$ topology.
\end{Corollary}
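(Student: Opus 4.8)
The plan is to establish \emph{sequential} closedness, which suffices since $W_0^{1,\mc H}(\Om)$ is a Banach (hence metric) space. Thus I would fix $\la\in(0,\Lambda^*)$, take an arbitrary sequence $\{u_n\}\subset M_\la^-$ with $u_n\to u$ strongly in $W_0^{1,\mc H}(\Om)$, and aim to show that the limit $u$ again lies in $M_\la^-$.

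First I would record the continuity, with respect to strong convergence in $W_0^{1,\mc H}(\Om)$, of the four building-block functionals appearing in $\phi_v^\prime(1)$ and $\phi_v^{\prime\prime}(1)$, namely $v\mapsto\int_\Om H(\nabla v)^p\,dx$, $v\mapsto\int_\Om\mu(x)H(\nabla v)^q\,dx$, $v\mapsto\int_\Om a(x)|v|^{1-\delta}\,dx$, and $v\mapsto\int_\Om b(x)|v|^{p^*}\,dx$ (recall that $\alpha+1=p^*$ in the critical case). The first two follow from the equivalence of $H$ with the Euclidean norm together with the continuous embedding $\V\hookrightarrow\Wpzero{p}$ of Proposition \ref{proposition_embeddings}\,(i); the third follows from H\"older's inequality using $(H3)$, i.e. $a\in L^{\frac{p^*}{p^*-1+\delta}}(\Om)$, and the embedding $\V\hookrightarrow L^{p^*}(\Om)$; the last one from the same embedding together with $b\in L^\infty(\Om)$. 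Consequently $\phi_{u_n}^\prime(1)\to\phi_u^\prime(1)$ and $\phi_{u_n}^{\prime\prime}(1)\to\phi_u^{\prime\prime}(1)$ as $n\to\infty$.

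Next I would pass to the limit in the defining relations of $M_\la^-$. Since each $u_n\in M_\la^-$ satisfies $\phi_{u_n}^\prime(1)=0$ and $\phi_{u_n}^{\prime\prime}(1)<0$, the continuity above yields $\phi_u^\prime(1)=0$ and $\phi_u^{\prime\prime}(1)\leq 0$. To place $u$ inside $M_\la$ I still need $u\neq 0$, and this is exactly where Proposition \ref{gap} becomes decisive: the gap lower bound $\int_\Om H(\nabla u_n)^p\,dx\geq D_1>0$ survives the passage to the limit, giving $\int_\Om H(\nabla u)^p\,dx\geq D_1>0$, so $u\neq 0$ and hence $u\in M_\la$. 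Finally, taking $\Lambda^*\leq\lambda_0$ with $\lambda_0$ from Lemma \ref{Nest0emp} (redefining $\Lambda^*$ as the smaller of the two thresholds if necessary), we have $M_\la^0=\emptyset$, so the possibility $\phi_u^{\prime\prime}(1)=0$ is excluded; combined with $\phi_u^{\prime\prime}(1)\leq 0$ this forces $\phi_u^{\prime\prime}(1)<0$, that is $u\in M_\la^-$, completing the argument.

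The conceptual heart of the proof — and the step I expect to be the main obstacle — is ruling out degeneration of the limit to $0$. Without the uniform lower bound supplied by the gap structure, a sequence in $M_\la^-$ could a priori drift to the trivial function, and in the critical regime $\alpha+1=p^*$ this loss of compactness is the genuine danger; the bound $\int_\Om H(\nabla u_n)^p\,dx\geq D_1$ is precisely what prevents it, while the emptiness of $M_\la^0$ is what converts the weak sign condition into the strict one. The only other delicate, though routine, point is the continuity of the critical functional $v\mapsto\int_\Om b(x)|v|^{p^*}\,dx$, which relies on strong $L^{p^*}$-convergence; this is available here because closedness is tested against strongly convergent sequences rather than merely weakly convergent ones, so no concentration-compactness analysis is needed at this stage.
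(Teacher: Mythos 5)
Your proof is correct and follows essentially the same route as the paper: continuity of the constituent functionals under strong convergence places the limit in $M_\la^-\cup M_\la^0\cup\{0\}$, the gap bound $\int_\Om H(\nabla u_n)^p\,dx\ge D_1>0$ from Proposition \ref{gap} rules out degeneration to zero, and emptiness of $M_\la^0$ upgrades $\phi_u^{\prime\prime}(1)\le 0$ to the strict inequality. You are in fact slightly more careful than the paper, which simply writes $\overline{M_\la^-}=M_\la^-\cup M_\la^0$ and concludes at once, leaving implicit exactly the point you make explicit, namely that one must also take $\la$ below the threshold $\la_0$ of Lemma \ref{Nest0emp} so that $M_\la^0=\emptyset$.
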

\begin{proof}
Let $\{u_k\}_{{\color{red}k\in\mathbb{N}}}$ be a sequence in $M_\la^-$ which converges to $u_0$ strongly in $W^{1,\mc H}_0(\Om)$. Then $u_0\in \overline{M_\la^-}= M_\la^-\cup M_\la^0$. By Proposition \ref{gap}, we get the following
\[0<D_1\leq \lim_{k\to \infty} \int_\Om( H(\nabla u_k)^p+\mu(x) H(\nabla u_k)^q)~dx =\lim_{k\to\infty}\rho_{\mc H}(\nabla u_k) = \rho_{\mc H}(\nabla u_0). \]
{This implies that $u_0\in M_\la^-$ and our proof is complete.}
\end{proof}

Now that we have $M_\la^+\cup \{0\}$ and $M_\la^-$ as closed subsets of $W^{1,\mc H}_0(\Om)$, we apply the Ekeland variational principle to get a sequence $\{u_k\}_{{\color{red}k\in\mathbb{N}}}\subset Z^\pm$ as a minimizing sequence for $I_\la$ in $Z^\pm$, where 
\[Z^+ = M_\la^+\cup \{0\} \;\text{and}\; Z^-= M_\la^-.\]
The sequence $\{u_k\}_{{\color{red}k\in\mathbb{N}}}$ satisfies the following-
\begin{enumerate}
    \item[(I)] $m_\la^\pm \leq I_\la \leq m_\la^\pm +\frac{1}{k}$;
    \item[(II)] $I_\la(v) \geq I_\la(u_k) + \frac{\|v-u_k\|}{k}$, for any $v\in Z^\pm$,
\end{enumerate}
{where $m_\lambda^+= \inf\limits_{M_\lambda^+}I_\lambda$ and $m_\lambda^-= \inf\limits_{M_\lambda^-}I_\lambda$.} By Lemma \ref{cbb}, we know that $\{u_k\}_{{\color{red}k\in\mathbb{N}}}$ must be bounded in $M_\la$ and thus, up to a subsequence, 
\[u_k\rightharpoonup u_0\;\text{as}\; k \to \infty\]
for some $u_0 \in W^{1,\mc H}_0(\Om)$. By Lemma \ref{M+}, we know $m_\la^+<0$. So $ I_\la(u_k) \to m_\la^+<0$ says that $u_0 \not\equiv 0$. By virtue of $(H2)$, we say that $I_\la(u_k)=I_\la(|u_k|)$, so we may w.l.o.g. assume that $u_k\geq 0$ and $u_0\geq 0$ a.e. in $\Om$.

\begin{Lemma}\label{liminf-pos-I}
Let $\la\in (0,\min\{\la_*, \Lambda^*\})$ and $\{u_k\}_{{\color{red}k\in\mathbb{N}}}\subset M_\la^+$, then 
\begin{align*}
&\liminf_{k\to \infty}\left[ \int_\Om \left((p+\delta-1)H(\nabla u_k)^p+(q+\delta-1)\mu(x)H(\nabla u_k)^q\right)~dx\right.\\
&\quad \left.-\la (p^*+\delta-1)\int_\Om b(x)|u_k|^{p^*}~dx\right]>0.
\end{align*}
\end{Lemma}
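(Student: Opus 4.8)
The plan is to recognise the bracketed quantity as the second fibering derivative $\phi_{u_k}''(1)$, observe that it is positive by construction, and then prove \emph{strict} positivity of the $\liminf$ by contradiction; the only genuine difficulty is the failure of compactness at the critical exponent. First I would record the algebraic identity that, for $u\in M_\la$ (so that $\phi_u'(1)=0$) and $\alpha+1=p^*$, the expression inside the $\liminf$ is exactly $\phi_u''(1)$: eliminating $\int_\Om a(x)|u|^{1-\delta}\,dx$ from the formula for $\phi_u''(1)$ by means of $\phi_u'(1)=0$ produces precisely $(p-1+\delta)\int_\Om H(\nabla u)^p\,dx+(q-1+\delta)\int_\Om\mu(x)H(\nabla u)^q\,dx-\la(p^*-1+\delta)\int_\Om b(x)|u|^{p^*}\,dx$. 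Thus for every $u_k\in M_\la^+$ the bracket equals $\phi_{u_k}''(1)>0$, so the $\liminf$ is automatically $\ge 0$ and the entire content of the lemma is strict positivity. Eliminating instead the critical term gives the equivalent form $\phi_{u_k}''(1)=(p^*-1+\delta)A_k-(p^*-p)P_{p,k}-(p^*-q)P_{q,k}$, where $A_k=\int_\Om a|u_k|^{1-\delta}\,dx$, $P_{p,k}=\int_\Om H(\nabla u_k)^p\,dx$ and $P_{q,k}=\int_\Om\mu H(\nabla u_k)^q\,dx$; this is the form I would actually estimate.

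Suppose, for contradiction, that $\liminf_k\phi_{u_k}''(1)=0$ and pass to a subsequence along which $\phi_{u_k}''(1)\to 0$. By the gap structure (Proposition \ref{gap}) the modular $\rho_{\mc H}(\nabla u_k)=P_{p,k}+P_{q,k}$ stays bounded, hence $\{u_k\}$ is bounded in $\V$ and, up to a further subsequence, $u_k\rightharpoonup u_0$ in $\V$, $u_k\to u_0$ in $L^r(\Om)$ for $r<p^*$ and a.e. Since $\{u_k\}$ is the minimizing sequence for $m_\la^+$ and $m_\la^+<0$ by Lemma \ref{M+}, the limit $u_0$ is not identically zero; a Vitali argument exactly as in Lemma \ref{mt-1} then gives $A_k\to A_0=\int_\Om a|u_0|^{1-\delta}\,dx>0$. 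Passing to the limit in the identity above yields the constraint $(p^*-p)P_{p,\infty}+(p^*-q)P_{q,\infty}=(p^*-1+\delta)A_0$, where $P_{p,\infty},P_{q,\infty}$ are the subsequential limits of $P_{p,k},P_{q,k}$.

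The hard part, and the one I expect to be the main obstacle, is to upgrade the trivial inequality $\phi_{u_k}''(1)>0$ to a limit bounded away from $0$, and this is exactly where the non-compact embedding $\V\hookrightarrow L^{p^*}(\Om)$ intervenes: a priori the concentration of $|u_k|^{p^*}$ could drive $\phi_{u_k}''(1)\to 0$. To exclude this I would split $u_k=u_0+w_k$ with $w_k\rightharpoonup 0$ and apply the Brezis--Lieb lemma to the modular terms and to the critical term, so that $P_{p,k},P_{q,k}$ and $\int_\Om b|u_k|^{p^*}\,dx$ each decompose into the $u_0$-part plus a concentration part carried by $w_k$ (the subcritical term $A_k$ carrying none). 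Writing $L=\lim_k|w_k|_{p^*}^{p^*}$ and combining the Sobolev inequality $S_1$ with the norm equivalence \eqref{new}, the concentrated gradient energy is bounded below by a multiple of $L^{p/p^*}$ while the concentrated critical contribution is at most $\la|b|_\infty L$. Inserting this into the energy decomposition $m_\la^+=I_\la(u_0)+\big(\tfrac1p R_p+\tfrac1q R_q-\tfrac{\la}{p^*}T\big)+o(1)$ and comparing $I_\la(u_0)$ with the value of $I_\la$ at the fibre minimiser $t_1u_0\in M_\la^+$ furnished by Lemma \ref{t-1,t-2}, the smallness of $\la$ makes the threshold $L$ must exceed grow without bound, which is incompatible with the boundedness of $|u_k|_{p^*}$; hence $L=0$, so $w_k\to 0$ and $u_k\to u_0$ strongly in $\V$.

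Strong convergence then gives $\phi_{u_0}''(1)=\lim_k\phi_{u_k}''(1)=0$ together with $\phi_{u_0}'(1)=0$, so $u_0\in M_\la^0$. It remains to note that $M_\la^0=\emptyset$ whenever $\la<\Lambda^*$: running the two computations in the proof of Proposition \ref{gap} with the equality $\phi_{u_0}''(1)=0$ in place of the strict inequalities $\phi''(1)>0$ and $\phi''(1)<0$ yields simultaneously $\int_\Om H(\nabla u_0)^p\,dx\le D_2$ and $\int_\Om H(\nabla u_0)^p\,dx\ge D_1$, which contradicts $D_1>D_2$. This contradiction establishes $\liminf_k\phi_{u_k}''(1)>0$, which is the assertion of the lemma. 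The identity of the first paragraph and the $M_\la^0=\emptyset$ argument are routine given the machinery already set up; the weight of the proof lies entirely in the Brezis--Lieb/Sobolev compactness step that rules out $L>0$.
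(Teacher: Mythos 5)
Your reduction is exactly the paper's starting point: for $u\in M_\la$ the bracket equals $\phi_u''(1)$, and eliminating the critical term instead gives the equivalent form $(p^*+\delta-1)\int_\Om a(x)|u|^{1-\delta}\,dx-(p^*-p)\int_\Om H(\nabla u)^p\,dx-(p^*-q)\int_\Om\mu(x)H(\nabla u)^q\,dx$, so the $\liminf$ is automatically nonnegative and only strict positivity is at stake; the contradiction hypothesis and the limit identity $(p^*-p)E_1+(p^*-q)E_2=(p^*-1+\delta)\int_\Om a(x)|u_0|^{1-\delta}\,dx$ also match the paper. From there, however, your route has a genuine gap at precisely the step you flag as the hard one. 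Showing that the concentration $L=\lim_k|u_k-u_0|_{p^*}^{p^*}$ vanishes is essentially the content of Proposition \ref{strong-con}, but in the paper that proposition is proved \emph{after} the present lemma and rests on Lemma \ref{weak-solI} (testing the ``almost equation'' with $h=u_k-u_0$ to obtain $K_a\le 0$ and hence the key inequality \eqref{weak-solI-11}, $l_1^p+l_2^q\le\la d^{p^*}$), which rests on Lemma \ref{unif-bdd}, which rests on the lemma you are proving. Without that input the only relation between the concentrated gradient energy $R_p+R_q$ and the concentrated critical mass $\la T$ is the Brezis--Lieb/Nehari identity $R_p+R_q-\la T+\phi_{u_0}'(1)=0$, and the sign of $\phi_{u_0}'(1)$ is not controlled, so your chain ``$cL^{p/p^*}\le$ concentrated gradient energy versus concentrated critical term $\le\la|b|_\infty L$'' does not close. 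The fallback comparison of $I_\la(u_0)$ with $I_\la(t_1u_0)$ only works when $1\le t_2$, which is exactly the case analysis of Proposition \ref{strong-con}, and the decisive case there needs the extra smallness $\la<\Lambda_0$ (defined through $S_2$), which is not among the hypotheses $\la<\min\{\la_*,\Lambda^*\}$ of this lemma.

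The paper avoids all of this and never proves (or needs) strong convergence here. It passes to the limit in the elementary fibering inequality \eqref{fm-1}, $\sigma_{u_k}(t_{\max})-\la\int_\Om b(x)|u_k|^{p^*}\,dx>0$, whose explicit lower bound of the form $c\big(\int_\Om H(\nabla u_k)^p\,dx\big)^{(p^*+\delta-1)/(p+\delta-1)}A_k^{-(p^*-p)/(p+\delta-1)}$ was computed before Lemma \ref{t-1,t-2} and holds for every $\la<\la_*$. Substituting the two limit identities coming from $\phi_{u_k}'(1)=0$ and $\phi_{u_k}''(1)\to0$ into that bound produces $-E_2(q+\delta-1)/(p^*+\delta-1)\ge 0$ (or an analogous absurdity when $E_2=0$), a purely algebraic contradiction. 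The quantitative input your sketch is missing is exactly \eqref{fm-1}; with it, the Brezis--Lieb and Sobolev machinery is unnecessary, and without it your compactness step cannot be completed at this stage of the argument.
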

\begin{proof}
Since $\{u_k\}\subset M_\la^+$, we have $\phi_{u_k}^\prime(1) =0$ and $\phi_{u_k}^{\prime \prime}(1)>0$. This gives rise to two {equivalent} inequalities-
\begin{enumerate}
    \item[(a)]  $\int_\Om \left((p+\delta-1)H(\nabla u_k)^p+(q+\delta-1)\mu(x)H(\nabla u_k)^q\right)~dx -\la (p^*+\delta-1)\int_\Om b(x)|u_k|^{p^*}~dx>0$;
    \item[(b)] $\int_\Om \left((p-p^*)H(\nabla u_k)^p+(q-p^*)\mu(x)H(\nabla u_k)^q\right)~dx + (p^*+\delta-1)\int_\Om a(x)|u_k|^{1-\delta}~dx>0$.
\end{enumerate}
{So to prove this Lemma, it is enough to show that 
\[\liminf\limits_{k\to \infty}\left[\int_\Om \left((p-p^*)H(\nabla u_k)^p+(q-p^*)\mu(x)H(\nabla u_k)^q\right)~dx + (p^*+\delta-1)\int_\Om a(x)|u_k|^{1-\delta}~dx\right]>0.\]}
Due to (b), we already have 
\[\liminf\limits_{k\to \infty}\left[\int_\Om \left((p-p^*)H(\nabla u_k)^p+(q-p^*)\mu(x)H(\nabla u_k)^q\right)~dx + (p^*+\delta-1)\int_\Om a(x)|u_k|^{1-\delta}~dx\right]\geq 0\]
and on contrary, let us assume that 
\begin{equation}\label{liminf-pos-1}
    \liminf\limits_{k\to \infty}\left[\int_\Om \left((p-p^*)H(\nabla u_k)^p+(q-p^*)\mu(x)H(\nabla u_k)^q\right)~dx + (p^*+\delta-1)\int_\Om a(x)|u_k|^{1-\delta}~dx\right]=0.
\end{equation}
Recalling \eqref{mt-1-2} and using it in \eqref{liminf-pos-1}, we obtain 
\begin{equation}\label{liminf-pos-2}
    \liminf\limits_{k\to \infty}\left[\int_\Om \left((p^*-p)H(\nabla u_k)^p+(p^*-q)\mu(x)H(\nabla u_k)^q\right)~dx \right] =(p^*+\delta-1)\int_\Om a(x)|u_0|^{1-\delta}~dx.
\end{equation}
By virtue of \eqref{liminf-pos-2}, we can assume that there exist {$E_1,\;E_2\geq0$, but not both zero at the same time,} such that 
\[\lim_{k\to \infty}\int_\Om H(\nabla u_k)^p~dx=E_1\;\text{and}\; \lim_{k\to \infty}\int_\Om \mu(x) H(\nabla u_k)^q~dx=E_2.\]
Therefore, \eqref{liminf-pos-2} gives that 
\begin{equation}\label{liminf-pos-3}
     (p^*+\delta-1)\int_\Om a(x)|u_0|^{1-\delta}~dx= \left(\frac{p^*-p}{p^*+\delta-1}\right)E_1 + \left(\frac{p^*-q}{p^*+\delta-1}\right)E_2.
\end{equation}
From $\lim\limits_{k\to \infty}\phi_{u_k}^{\prime}(1)=0$, we have
\[E_1+E_2 -\int_\Om a(x)|u_0|^{1-\delta}~dx = \la \lim_{k\to \infty}b(x)|u_k|^{p^*}~dx.\]
Using \eqref{liminf-pos-2} in above, we obtain
\begin{equation}\label{liminf-pos-4}
    \la \lim_{k\to \infty}b(x)|u_k|^{p^*}~dx= \left(\frac{p+\delta-1}{p^*+\delta-1}\right)E_1 + \left(\frac{q+\delta-1}{p^*+\delta-1}\right)E_2.
\end{equation}
Recalling \eqref{fm-1} and the fact that $\la\in(0,\la_*)$, we assert that $\sigma_{u_k}(t_{\max})-\la \int_\Om b(x)|u_k|^{p^*}~dx>0$. Passing on the limits $k\to \infty$ in this, we get 
\[\left(\frac{p^*-p}{p^*+\delta-1}\right)^{\frac{p^*+\delta-1}{p-1+\delta}}\left(\frac{p+\delta-1}{p^*-p}\right) {E_1^{\frac{p^*+\delta-1}{p+\delta-1}}}{\left(\int_\Om a(x)|u_0|^{1-\delta}~dx\right)^{\frac{p-p^*}{p-1+\delta}}} - \la\lim_{k\to \infty}b(x)|u_k|^{p^*}~dx \geq 0. \]
Now {if $E_2>0$ then} using \eqref{liminf-pos-3} in the above and simplifying the terms, we shall get 
\[-E_2 \left(\frac{q+\delta-1}{p^*+\delta-1}\right)\geq 0\]
which is a contradiction, since $E_2,\; q+\delta-1,\;p^*+\delta-1>0$. This renders us the desired result. {If $E_2=0$ then using \eqref{liminf-pos-1} and \eqref{liminf-pos-2}, we obtain
\[\left(\frac{p^*-p}{p^*+\delta-1}\right)^{\frac{p^*+\delta-1}{p-1+\delta}}\left(\frac{p+\delta-1}{p^*-p}\right) {E_1^{\frac{p^*+\delta-1}{p+\delta-1}}}{\left(\frac{(p^*-p)E_1}{p+\delta-1}\right)^{\frac{p-p^*}{p-1+\delta}}} - \left(\frac{p+\delta-1}{p^*+\delta-1}\right)E_1 \geq 0\]
which on simplification gives
\[\left(\frac{p+\delta-1}{p^*+\delta-1}\right)E_1\left(\left(\frac{p+\delta-1}{p^*+\delta-1}\right)^{\frac{p^*-p}{p+\delta-1}}-1\right)\geq 0,\]
which is a contradiction, since $E_2=0$ implies $E_1>0$. This completes the proof.
} 
\end{proof}

\begin{Lemma}\label{liminf-pos-II}
Let $\la\in (0,\min\{\la_*, \Lambda^*\})$ and $\{u_k\}\subset M_\la^-$ then 
\begin{align*}
    &\liminf_{k\to \infty}\left[ \int_\Om \left((p+\delta-1)H(\nabla u_k)^p+(q+\delta-1)\mu(x)H(\nabla u_k)^q\right)~dx\right.\\
   &\quad  \left.-\la (p^*+\delta-1)\int_\Om b(x)|u_k|^{p^*}~dx\right]<0.
\end{align*}
\end{Lemma}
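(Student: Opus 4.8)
\emph{The plan is to} mirror the proof of Lemma \ref{liminf-pos-I}, the only structural novelty being that the contrary sign of $\phi_{u_k}''(1)$ on $M_\la^-$ is compensated by the gap estimate of Proposition \ref{gap}. First I would observe that, after inserting the membership relation $\phi_{u_k}'(1)=0$ (which holds since $M_\la^-\subset M_\la$), namely
\[\int_\Om\big(H(\nabla u_k)^p+\mu(x)H(\nabla u_k)^q\big)\,dx=\int_\Om a(x)|u_k|^{1-\delta}\,dx+\la\int_\Om b(x)|u_k|^{p^*}\,dx,\]
the bracketed quantity in the statement is \emph{exactly} $\phi_{u_k}''(1)$. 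As $u_k\in M_\la^-$ forces $\phi_{u_k}''(1)<0$ for every $k$, the liminf is automatically $\le 0$, so the entire task is to exclude the value $0$.

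Next I would rewrite $\phi_{u_k}''(1)<0$ in the form equivalent to inequality (b) of Lemma \ref{liminf-pos-I} (again eliminating the $b$-term through $\phi_{u_k}'(1)=0$):
\[\int_\Om\big((p-p^*)H(\nabla u_k)^p+(q-p^*)\mu(x)H(\nabla u_k)^q\big)\,dx+(p^*+\delta-1)\int_\Om a(x)|u_k|^{1-\delta}\,dx<0,\]
so it suffices to prove the liminf of the left-hand side is strictly negative. Arguing by contradiction, suppose this liminf equals $0$; passing to a subsequence realising it, I would set $E_1=\lim_k\int_\Om H(\nabla u_k)^p\,dx$ and $E_2=\lim_k\int_\Om\mu(x)H(\nabla u_k)^q\,dx$ (both limits exist along a further subsequence, by the boundedness from Lemma \ref{cbb}). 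Here Proposition \ref{gap} is decisive: it yields $E_1\ge D_1>0$, so $E_1$ is automatically nonzero. This is a genuine simplification over the $M_\la^+$ case, where the alternative $E_2=0$ had to be handled by the ``not both zero'' device.

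Then, using \eqref{mt-1-2} to pass to the limit in the $a$-term, the vanishing liminf gives the identity $(p^*-p)E_1+(p^*-q)E_2=(p^*+\delta-1)\int_\Om a(x)|u_0|^{1-\delta}\,dx$, the analogue of \eqref{liminf-pos-3}; coupling this with $\lim_k\phi_{u_k}'(1)=0$ produces $\la\lim_k\int_\Om b(x)|u_k|^{p^*}\,dx=\big((p+\delta-1)E_1+(q+\delta-1)E_2\big)/(p^*+\delta-1)$, the analogue of \eqref{liminf-pos-4}. Finally I would invoke the lower bound $\sigma_{u_k}(t_{\max})\ge\tilde{\sigma}_{u_k}(t_0)$ together with \eqref{fm-1} (valid for every $u$ when $\la\in(0,\la_*)$), pass to the limit $k\to\infty$, and substitute the two identities just obtained. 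The resulting expression simplifies to $-\tfrac{q+\delta-1}{p^*+\delta-1}E_2\ge 0$ when $E_2>0$, and (using $E_1>0$) to $\big((\tfrac{p+\delta-1}{p^*+\delta-1})^{\frac{p^*-p}{p+\delta-1}}-1\big)E_1\ge 0$ when $E_2=0$; each is impossible, delivering the contradiction.

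\emph{The main obstacle is bookkeeping rather than conceptual:} since the contrary hypothesis ``liminf $=0$'' is identical to the one driving Lemma \ref{liminf-pos-I}, the same algebraic cancellation in the $\sigma_{u_k}(t_{\max})$ estimate forces the contradiction, and the only genuinely new ingredient is Proposition \ref{gap} guaranteeing $E_1>0$. Care is needed to extract a single subsequence along which $E_1$, $E_2$ and $\lim_k\int_\Om b(x)|u_k|^{p^*}\,dx$ all converge, and to justify passing to the limit in the $a$-term via the Vitali-type convergence \eqref{mt-1-2}, which remains available here because the compact embedding $\V\hookrightarrow L^r(\Om)$ holds for all $r<p^*$; note that the critical $b$-term $\int_\Om b(x)|u_k|^{p^*}\,dx$ is \emph{not} identified with its pointwise limit, but only its existence along the subsequence (from boundedness) is used.
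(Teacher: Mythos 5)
Your proposal is correct and follows essentially the same route as the paper, whose proof of this lemma consists precisely of the instruction to repeat the argument of Lemma \ref{liminf-pos-I} with the reversed inequalities coming from $\phi_{u_k}''(1)<0$ on $M_\la^-$. Your additional observation that Proposition \ref{gap} forces $E_1\ge D_1>0$ is a genuine (and welcome) strengthening of the subcase analysis, since on $M_\la^-$ one no longer needs the ``not both zero'' device used for $M_\la^+$.
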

\begin{proof}
Following exactly the arguments of Lemma \ref{liminf-pos-I}, we can prove this result.
\end{proof}
{By virtue of Lemma \ref{lem3.5}(as the proof still works in the critical case), for each $k\in \mathbb N$, we obtain a sequence of maps $\{\zeta_k\}:B_{\epsilon_k}(0) \to (0,\infty)$, where $\epsilon_k>0$,} such that 
\[\zeta_k(0)=1\;\text{and}\; \zeta_k(th)(u_k+th)\in M_\la^\pm\]
for sufficiently small $t>0$. From $u_k \in M_\la$ and $\zeta_k(th)(u_k+th)\in M_\la$, we have 
\begin{enumerate}
    \item[(G1)] \begin{equation*}
        \int_{\Om}\Big({H(\nabla u_k)^p}+\mu(x){H(\nabla u_k)^q}\Big)\,dx-\int_{\Om}a(x)|u_k|^{1-\delta}\,dx-{\lambda }\int_{\Om}b(x)|u_k|^{p^*}\,dx=0;
    \end{equation*}
    \item[(G2)] \begin{align*}
    &\int_{\Om}\Big({\zeta_k^{p}(th)H(\nabla w_k)^p}+\mu(x){\zeta_k^{q}(th)H(\nabla w_k)^q}\Big)\,dx-{\zeta_k^{1-\delta}(th)}\int_{\Om}a(x)|w_k|^{1-\delta}\,dx\\
    &\quad -{\lambda \zeta_k^{p^*}(th)}\int_{\Om}b(x)|w_k|^{p^*}\,dx=0,
    \end{align*}
    where $w_k :=u_k+th$.
\end{enumerate}
We denote $\langle \zeta_k^\prime(0),h\rangle $ as the dual action on $W^{1,\mc H}_0(\Om)$.

\begin{Lemma}\label{unif-bdd}
When $\la \in (0,\min\{\la_*, \Lambda^*\})$, $\langle \zeta_k^\prime(0),h\rangle $ is uniformly bounded w.r.t. $k$, for any  non negative $h\in W^{1,\mc H}_0(\Om)$.
\end{Lemma}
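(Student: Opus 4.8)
The plan is to obtain $\langle\zeta_k^\prime(0),h\rangle$ by implicit differentiation of the relation defining $\zeta_k$. Recall from Lemma~\ref{lem3.5} that (with $u=u_k$ and $\alpha+1=p^*$) the map $\zeta_k$ solves $F(y,\zeta_k(y))=0$, where
\begin{align*}
F(y,t)&=t^{p+\delta-1}\int_\Om H(\nabla(u_k+y))^p\,dx+t^{q+\delta-1}\int_\Om\mu(x)H(\nabla(u_k+y))^q\,dx\\
&\quad-\int_\Om a(x)|u_k+y|^{1-\delta}\,dx-\la t^{p^*-1+\delta}\int_\Om b(x)|u_k+y|^{p^*}\,dx .
\end{align*}
Differentiating $t\mapsto F(th,\zeta_k(th))\equiv 0$ at $t=0^+$ and using $\zeta_k(0)=1$ yields
\[
\langle\zeta_k^\prime(0),h\rangle=-\frac{\langle\partial_yF(0,1),h\rangle}{\partial_tF(0,1)} .
\]
Here $\partial_tF(0,1)=\phi_{u_k}^{\prime\prime}(1)$ (as already recorded in Lemma~\ref{lem3.5}), which is exactly the quantity analysed in Lemmas~\ref{liminf-pos-I}--\ref{liminf-pos-II}, while
\begin{align*}
\langle\partial_yF(0,1),h\rangle&=\int_\Om\big(pH(\nabla u_k)^{p-1}+q\mu(x)H(\nabla u_k)^{q-1}\big)\nabla_\eta H(\nabla u_k)\nabla h\,dx\\
&\quad-(1-\delta)\int_\Om a(x)u_k^{-\delta}h\,dx-\la p^*\int_\Om b(x)u_k^{p^*-1}h\,dx .
\end{align*}

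First I would dispose of the denominator. Since $\{u_k\}$ is bounded in $\V$ by Lemma~\ref{cbb}, I pass to a subsequence along which every modular integral entering $\phi_{u_k}^{\prime\prime}(1)$ converges. On this subsequence Lemma~\ref{liminf-pos-I} (when $u_k\in M_\la^+$) or Lemma~\ref{liminf-pos-II} (when $u_k\in M_\la^-$) forces $|\partial_tF(0,1)|\ge c_0>0$ for all large $k$, so the denominator stays bounded away from $0$; this is precisely what those two lemmas were set up to deliver. The two regular integrals in the numerator are uniformly bounded as well: the boundedness of $\{u_k\}$ in $\V$, the $0$-homogeneity and boundedness of $\nabla_\eta H$, Hölder's inequality in $L^{\mc H}(\Om)$, and the embedding $\V\hookrightarrow L^{p^*}(\Om)$ together with $b\in L^\infty(\Om)$ bound them by a constant $C(h)$ independent of $k$.

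The crux, and the main obstacle, is the singular integral $\int_\Om a(x)u_k^{-\delta}h\,dx$, which is not controlled by $\|u_k\|$ and could a priori be large where $u_k$ is small. Its sign is favourable: since $a\ge0$, $h\ge0$ and $u_k\ge0$, it enters the numerator with a nonpositive coefficient, so, combined with the definite sign of $\partial_tF(0,1)$ on $M_\la^{\pm}$, it immediately gives the bound on $\langle\zeta_k^\prime(0),h\rangle$ from one side. For the remaining side I would establish uniform integrability of $\{a\,u_k^{-\delta}h\}$: writing this as the monotone limit in $t\to0^+$ of the difference quotients $t^{-1}a\big((u_k+th)^{1-\delta}-u_k^{1-\delta}\big)$, exploiting the concavity and subadditivity of $s\mapsto s^{1-\delta}$, and estimating through Hölder against $a\in L^{p^*/(p^*-1+\delta)}(\Om)$ together with the strong convergence $u_k\to u_0$ in $L^r(\Om)$, $r<p^*$. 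Each such term is finite for fixed $k$ by the implicit function theorem invoked in Lemma~\ref{lem3.5}; the genuinely delicate point, on which the whole argument rests, is to make this bound \emph{uniform} in $k$, after which dividing the uniformly bounded numerator by the denominator, which is bounded away from $0$, completes the proof.
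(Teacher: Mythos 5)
Your one-sided bound is essentially the paper's: for $u_k\in M_\la^{+}$ the singular term enters with a favourable sign (the paper drops the nonnegative difference quotient $t^{-1}\int_\Om a(x)(|w_k|^{1-\delta}-|u_k|^{1-\delta})\,dx$ before passing to the limit), and then Lemma \ref{liminf-pos-I} keeps the coefficient of $\langle\zeta_k'(0),h\rangle$ away from zero while the regular terms are bounded by $\|u_k\|$. Up to that point the two arguments agree.

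The other side is where your proposal has a genuine gap, and it is located exactly where you flag it. First, your starting formula $\langle\zeta_k'(0),h\rangle=-\langle\partial_yF(0,1),h\rangle/\partial_tF(0,1)$ already presupposes that $y\mapsto\int_\Om a(x)|u_k+y|^{1-\delta}\,dx$ is Gateaux differentiable at $0$ with derivative $(1-\delta)\int_\Om a(x)u_k^{-\delta}h\,dx$; in this paper the finiteness of that integral is only \emph{proved} in Lemma \ref{weak-solI}, and that proof uses Lemma \ref{unif-bdd} — so your route is circular. Second, the uniform-integrability argument you sketch cannot close: concavity gives $(u_k+th)^{1-\delta}-u_k^{1-\delta}\le (th)^{1-\delta}$, so the difference quotients are only bounded by $t^{-\delta}h^{1-\delta}$, which blows up as $t\to0^{+}$, and no H\"older estimate against $a\in L^{p^{*}/(p^{*}-1+\delta)}(\Om)$ together with convergence of $u_k$ in $L^{r}(\Om)$ can control the \emph{negative} power $u_k^{-\delta}$, since $u_k$ may be arbitrarily small on sets of positive measure. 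Third, and decisively, the statement is not true for an arbitrary bounded sequence in $M_\la^{\pm}$: the paper's proof of the upper bound hinges on the Ekeland inequality (II), taking $v=\zeta_k(th)w_k$ and expanding $I_\la(u_k)-I_\la(\zeta_k(th)w_k)$ \emph{after} eliminating the singular integral by means of the two Nehari constraints (G1)--(G2), so that the singular term never appears with an unfavourable sign; the assumption $\liminf_k\langle\zeta_k'(0),h\rangle=+\infty$ then contradicts Lemma \ref{liminf-pos-I} and the boundedness of $\|u_k\|$. Your proposal never invokes (II), which is the missing ingredient; without it the upper bound cannot be obtained.
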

\begin{proof}
Let us first assume the case that $\{u_k\}\subset M_\la^+$. Then subtracting (G1) from (G2), we get
\begin{equation}\label{div}
\begin{split}
    & \int_\Om \left(\left(\zeta_k(th)^p-1\right)H(\nabla w_k)^p+ \mu(x) \left(\zeta_k(th)^q-1\right)H(\nabla w_k)^q\right)dx \\
    &\quad + \int_\Om \left((H(\nabla w_k)^p-H(\nabla u_k)^p)+\mu(x)(H(\nabla w_k)^q-H(\nabla u_k)^q)\right)dx\\
    & \quad \quad - \left(\zeta_k^{1-\delta}(th)-1\right)\int_\Om a(x)|w_k|^{1-\delta}~dx -\int_\Om a(x) \left(|w_k|^{1-\delta}-|u_k|^{1-\delta}\right)dx\\
    & \quad \quad \quad - \la \left(\zeta_k^{p^*}(th)-1\right)\int_\Om b(x)|w_k|^{p^*}~dx -\la \int_\Om b(x) \left(|w_k|^{p^*}-|u_k|^{p^*}\right)dx =0.
\end{split}
\end{equation}
Taking into account $\int_\Om a(x) \left(|w_k|^{1-\delta}-|u_k|^{1-\delta}\right)dx\geq 0$ since $h$ is non negative, dividing both sides of the equation \eqref{div} by $t>0$ and then passing the limit $t\to 0^+$, we obtain
\begin{equation*}
\begin{split}
    0&\leq \langle \zeta_k^\prime(0),h\rangle \left(\int_\Om \left(pH(\nabla u_k)^p+q\mu(x) H(\nabla u_k)^q\right)dx -(1-\delta) \int_\Om a(x)|u_k|^{1-\delta}dx \right.\\
    &\left.\quad -\la p^* \int_\Om b(x)|u_k|^{p^*}dx\right)+ \int_\Om \left(pH^{p-1}(\nabla u_k)+q \mu(x) H^{q-1}(\nabla u_k)\right)\nabla_\eta H(\nabla u_k)\nabla h~dx\\
    &\quad \quad -\la p^* \int_\Om b(x) |u_k|^{p^*-1}h~dx.
\end{split}
\end{equation*}
Since $\phi_{u_k}^\prime(1)=0$, from above inequality we obtain
\begin{equation}\label{div1}
\begin{split}
    0&\leq \langle \zeta_k^\prime(0),h\rangle \left(\int_\Om \left((p+\delta-1)H(\nabla u_k)^p+ (q+\delta-1)\mu(x) H(\nabla u_k)^q\right)dx\right.\\
    &\quad \left.-\la (p^*+\delta-1) \int_\Om b(x)|u_k|^{p^*}dx\right)\\
    &\quad + \int_\Om \left(pH^{p-1}(\nabla u_k)+ q\mu(x) H^{q-1}(\nabla u_k)\right)\nabla_\eta H(\nabla u_k)\nabla h~dx -\la p^* \int_\Om b(x) u_k^{p^*-1}h~dx.
    \end{split}
    \end{equation}
    Applying Lemma \ref{liminf-pos-I} in \eqref{div1} and boundedness of $\{u_k\}_{k\in \mathbb N}$ in $W^{1,\mc H}_0(\Om)$, we deduce that $\langle \zeta_k^\prime(0),h\rangle$ is bounded from below, for all non negative  $h\in W^{1,\mc H}_0(\Om)$. Now it remains to show that $\langle \zeta_k^\prime(0),h\rangle$ is bounded from above. Suppose, on contrary,
    {$\liminf_{k\to \infty}\langle \zeta_k^\prime(0),h\rangle=+\infty.$}
    {Then  for sufficiently large $k$, $\zeta_k(th)>1=\zeta_k(0)$. Recalling the fact that $u_k,\; \zeta_k(th)w_k \in M_\la^+$ and putting $v=\zeta_k(th)w_k $ in (II) (see the part before Lemma \ref{liminf-pos-I}), we obtain
    \begin{align*}
        &\frac{\|u_k\|}{k}(\zeta_k(th)-1)+\frac{t\|h\|}{k}\zeta_k(th)  \geq \frac{\|\zeta_k(th)w_k-u_k\|}{k} \geq I_\la(u_k) - I_\la(\zeta_k(th)w_k)\\
        & = \left(\frac{1}{p}-\frac{1}{1-\delta}\right)\int_\Om \left(H(\nabla u_k)^p-H( \zeta_k(th)\nabla w_k)^p\right)dx \\
        &\quad + \left(\frac{1}{q}-\frac{1}{1-\delta}\right)\int_\Om \mu(x)\left(H(\nabla u_k)^q-H( \zeta_k(th)\nabla w_k)^q\right)dx\\
        &\quad\quad +\la \left(\frac{1}{1-\delta}-\frac{1}{p^*}\right)\int_\Om b(x) \left(|u_k|^{p^*}-|\zeta_k(th)w_k|^{p^*}\right)dx\\
        &= \left(\frac{1}{1-\delta}-\frac{1}{p}\right)\int_\Om \left(H(\nabla w_k)^p-H( u_k)^p\right)dx + \left(\frac{1}{1-\delta}-\frac{1}{p}\right)(\zeta_k(th)^p-1)\int_\Om H(\nabla w_k)^pdx\\
        & \quad +\left(\frac{1}{1-\delta}-\frac{1}{q}\right)\int_\Om \mu(x) \left(H(\nabla w_k)^q-H( u_k)^q\right)dx\\
        &\quad \quad +  \left(\frac{1}{1-\delta}-\frac{1}{q}\right)(\zeta_k(th)^q-1)\int_\Om \mu(x) H(\nabla w_k)^q dx \\
         &\quad \quad \quad -\la \left(\frac{1}{1-\delta}-\frac{1}{p^*}\right)\int_\Om b(x)(|w_k|^{p^*}-|u_k|^{p^*})dx\\
        &\quad \quad \quad \quad - \la \left(\frac{1}{1-\delta}-\frac{1}{p^*}\right)(\zeta_k^{p^*}(th)-1)\int_\Om b(x)|w_k|^{p^*}dx.
    \end{align*}
    Diving both sides of the above inequality with $t>0$ and passing on the limit $t\to 0^+$, we get 
    \begin{align*}
        &\langle \zeta_k^\prime(0),h \rangle \frac{\|u_k\|}{k}+\frac{\|h\|}{k}  \geq \frac{\langle \zeta_k^\prime(0),h \rangle}{1-\delta} \left(\int_\Om \left((p+\delta-1)H(\nabla u_k)^p + \mu(x) (q+\delta-1)H(\nabla u_k)^q\right)dx \right.\\
        &\quad \left.-\la (p^*+\delta -1)\int_\Om b(x) |u_k|^{p^*}~dx \right)-\la \frac{p^*+\delta-1}{1-\delta}\int_\Om b(x)u_k^{p^*-1}h~dx\\
        &\quad \quad +\frac{1}{1-\delta}\int_\Om \left((p+\delta-1)H^{p-1}(\nabla u_k) + \mu(x)(q+\delta-1)H^{q-1}(\nabla u_k) \right) \nabla_\eta H(\nabla u_k)\nabla h~dx.
    \end{align*}
    Hence we get 
    \begin{align*}
        &\frac{\|h\|}{k}  \geq \frac{\langle \zeta_k^\prime(0),h \rangle}{1-\delta} \left(\int_\Om \left((p+\delta-1)H(\nabla u_k)^p + \mu(x) (q+\delta-1)H(\nabla u_k)^q\right)dx \right.\\
        &\quad \left.-\la (p^*+\delta -1)\int_\Om b(x) |u_k|^{p^*}~dx -\frac{1-\delta}{k}\|u_k\| \right)-\la \frac{p^*+\delta-1}{1-\delta}\int_\Om b(x)u_k^{p^*-1}h~dx\\
        &\quad \quad +\frac{1}{1-\delta}\int_\Om \left((p+\delta-1)H^{p-1}(\nabla u_k) + \mu(x)(q+\delta-1)H^{q-1}(\nabla u_k) \right) \nabla_\eta H(\nabla u_k)\nabla h~dx
    \end{align*}
    which will become absurd if $\liminf\limits_{k\to \infty}\langle \zeta_k^\prime(0),h \rangle = +\infty$, since $\|u_k\|$ is bounded and Lemma \ref{liminf-pos-I} holds true. So $\langle \zeta_k^\prime(0),h\rangle$ is bounded above as well. Therefore, $\langle \zeta_k^\prime(0),h\rangle $ is uniformly bounded w.r.t. $k$, for any  non negative $h\in W^{1,\mc H}_0(\Om)$ when $\{u_k\}\subset M_\la^+$. An exactly same argument will lead to establish $\langle \zeta_k^\prime(0),h\rangle $ is uniformly bounded w.r.t. $k$, for any  non negative $h\in W^{1,\mc H}_0(\Om)$ when $\{u_k\}\subset M_\la^-$, using Lemma \ref{liminf-pos-II}.}
 \end{proof}

\begin{Lemma}\label{weak-solI}
For any $h\in W^{1,\mc H}_0(\Om)$ and $k\in \mb N$, 
$\displaystyle\int_\Om a(x) u_k^{-\delta}h~dx <\infty$.
Moreover, the following holds true-
\begin{equation}\label{div3}
\begin{split}
   &\lim_{k\to \infty} \int_{\Om}\left(H(\nabla u_k)^{p-1}+\mu(x)H(\nabla u_k)^{q-1}\right)\nabla_{\eta}H(\nabla u_k)\nabla h\,dx\\
   &\quad {-\int_{\Om}\left(a(x)u_k^{-\delta}+\la b(x)u_k^{p^*-1}\right)h\,dx}=0.
\end{split}
\end{equation}
\end{Lemma}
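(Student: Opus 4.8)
The plan is to feed the fibered competitor $v=\zeta_k(th)(u_k+th)$ (which lies in $M_\la^{\pm}$ for small $|t|$, by the construction preceding Lemma \ref{unif-bdd}) into the Ekeland-type inequality (II), form the difference quotient in $t$, and let $t\to0^{+}$. Writing $w_k=u_k+th$, I would expand $I_\la(u_k)-I_\la(\zeta_k(th)w_k)$ and divide by $t>0$. Since $\zeta_k(0)=1$ and $\langle\zeta_k^\prime(0),h\rangle$ exists, the smooth (gradient and $b$) contributions converge by dominated convergence via $\V\hookrightarrow L^{p^*}(\Om)$ and $b\in L^\infty(\Om)$; differentiating the scaling factor produces a term proportional to $\langle\zeta_k^\prime(0),h\rangle$ times the regular part of $\phi_{u_k}^\prime(1)$, together with a singular contribution $\langle\zeta_k^\prime(0),h\rangle\int_{\Om}a(x)u_k^{1-\delta}\,dx$. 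The decisive simplification is that $u_k\in M_\la$ gives $\phi_{u_k}^\prime(1)=0$, i.e. $\int_{\Om}\big(H(\nabla u_k)^p+\mu(x)H(\nabla u_k)^q\big)\,dx-\la\int_{\Om}b(x)u_k^{p^*}\,dx=\int_{\Om}a(x)u_k^{1-\delta}\,dx$, so that \emph{every} term carrying the a priori uncontrolled factor $\langle\zeta_k^\prime(0),h\rangle$ cancels exactly. This cancellation is what makes the scheme work, since $\langle\zeta_k^\prime(0),h\rangle$ is only known to be bounded (Lemma \ref{unif-bdd}), not convergent.

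For the finiteness claim I would first take $h\ge0$ and treat the singular increment $\int_{\Om}a(x)\big((u_k+th)^{1-\delta}-u_k^{1-\delta}\big)/t\,dx$ by Fatou's lemma: its integrand is nonnegative and tends pointwise to $(1-\delta)a(x)u_k^{-\delta}h$, so its $\liminf$ is at least $(1-\delta)\int_{\Om}a(x)u_k^{-\delta}h\,dx$. After the cancellation above, passing to the limit in (II) yields
\[\int_{\Om}a(x)u_k^{-\delta}h\,dx\le\int_{\Om}\big(H(\nabla u_k)^{p-1}+\mu(x)H(\nabla u_k)^{q-1}\big)\nabla_{\eta}H(\nabla u_k)\nabla h\,dx-\la\int_{\Om}b(x)u_k^{p^*-1}h\,dx+\tfrac1k\big\|\langle\zeta_k^\prime(0),h\rangle u_k+h\big\|,\]
whose right-hand side is finite because $\{u_k\}$ is bounded in $\V$, $b\in L^\infty(\Om)$, and $\langle\zeta_k^\prime(0),h\rangle$ is bounded. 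Applying this to $|h|$ gives $\int_{\Om}a(x)u_k^{-\delta}|h|\,dx<\infty$ for arbitrary $h$, which is the first assertion.

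Once finiteness is available, abbreviate the weak residual by $\mathcal J_k(h):=\int_{\Om}\big(H(\nabla u_k)^{p-1}+\mu(x)H(\nabla u_k)^{q-1}\big)\nabla_{\eta}H(\nabla u_k)\nabla h\,dx-\int_{\Om}\big(a(x)u_k^{-\delta}+\la b(x)u_k^{p^*-1}\big)h\,dx$, which is now a well-defined \emph{linear} functional of $h$. The singular difference quotient then converges rather than merely satisfying the Fatou bound: splitting $\Om$ into $\{h\ge0\}$ and $\{h<0\}$, on each piece $t\mapsto\big((u_k+th)^{1-\delta}-u_k^{1-\delta}\big)/t$ is monotone with an $a$-integrable bound furnished by the finiteness step, so monotone convergence gives $\lim_{t\to0^+}\int_{\Om}a(x)\big((u_k+th)^{1-\delta}-u_k^{1-\delta}\big)/t\,dx=(1-\delta)\int_{\Om}a(x)u_k^{-\delta}h\,dx$ for every $h\in\V$. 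Hence $\lim_{t\to0^+}\big(I_\la(u_k)-I_\la(\zeta_k(th)w_k)\big)/t=-\mathcal J_k(h)$, and (II) forces $\mathcal J_k(h)\ge-C(h)/k$ with $C(h):=\|\langle\zeta_k^\prime(0),h\rangle u_k+h\|$ bounded uniformly in $k$. Replacing $h$ by $-h$ and using linearity yields the matching bound $\mathcal J_k(h)\le C(-h)/k$, so $|\mathcal J_k(h)|\le C/k\to0$, which is precisely \eqref{div3}.

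The step I expect to be the main obstacle is the control of the singular term, which is not covered by any differentiability of $I_\la$: both the finiteness and the passage to the limit rest on the Fatou/monotone-convergence analysis of the increment $(u_k+th)^{1-\delta}-u_k^{1-\delta}$, and this only becomes usable after the Nehari identity $\phi_{u_k}^\prime(1)=0$ eliminates the uncontrolled factor $\langle\zeta_k^\prime(0),h\rangle$ and Lemma \ref{unif-bdd} renders the Ekeland remainder $O(1/k)$ uniformly in $k$.
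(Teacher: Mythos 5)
Your first half coincides with the paper's proof: both insert the fibered competitor $\zeta_k(th)(u_k+th)$ into the Ekeland inequality, use $\phi_{u_k}'(1)=0$ to cancel every term carrying $\langle\zeta_k'(0),h\rangle$, apply Fatou to the singular increment for $h\ge0$, and deduce finiteness of $\int_\Om a(x)u_k^{-\delta}h\,dx$ together with the one-sided bound \eqref{weak-solI-2}. Where you genuinely diverge is in upgrading this to the two-sided identity \eqref{div3}. The paper never differentiates the singular term along a sign-changing direction: it tests the one-sided inequality (valid for nonnegative test functions) with $\psi_\epsilon^+=(u_k+\epsilon h)^+$, uses $u_k\in M_\la$ to cancel the zeroth-order block, drops $\int_{\Om_\epsilon^-}a(x)u_k^{-\delta}\psi_\epsilon^-\,dx$ by sign, estimates the leftover terms on $\Om_\epsilon^-=\supp\psi_\epsilon^-$, and lets $\epsilon\to0^+$. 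You instead run the difference-quotient argument for arbitrary $h$ and exploit the $h\mapsto-h$ symmetry; this is legitimate in principle (Lemma \ref{lem3.5} supplies $\zeta_k$ for every small perturbation, and $\langle\zeta_k'(0),\cdot\rangle$ is linear, so Lemma \ref{unif-bdd} also controls $\langle\zeta_k'(0),-h\rangle$) and yields \eqref{div3} more directly, without the truncation bookkeeping.

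The price is concentrated in the one step you treat too lightly. On $\{h<0\}$ the map $t\mapsto\bigl(|u_k+th|^{1-\delta}-u_k^{1-\delta}\bigr)/t$ is monotone only where $u_k+th$ has not crossed zero, i.e.\ off the set $\{0<u_k<t|h|\}$, so plain monotone convergence does not apply there; worse, on $\{h<0\}$ the difference quotients lie \emph{below} their pointwise limit, so Fatou gives an inequality in the useless direction and you genuinely need convergence of the integrals. This is repairable: split off $\{u_k<2t|h|\}$, where $\bigl||r|^{1-\delta}-|s|^{1-\delta}\bigr|\le|r-s|^{1-\delta}$ dominates the quotient by $2^{\delta}u_k^{-\delta}|h|$ and the set shrinks to a null set, and apply dominated convergence on the complement, all powered by the finiteness $\int_\Om a(x)u_k^{-\delta}|h|\,dx<\infty$ established in your first step. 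But this argument must be supplied — it is precisely the obstruction that the paper's positive-part truncation is designed to sidestep.
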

\begin{proof}
{Let us first assume that $h\in W^{1,\mc H}_0(\Om)$ and $h\geq 0$  in $\Omega$.} Then following same ideas as in proof of Lemma \ref{unif-bdd}, we write the following
\begin{align*}
    &\frac{\|u_k\|}{k}(\zeta_k(th)-1)+\frac{t\|h\|}{k}\zeta_k(th)   \geq I_\la(u_k) - I_\la(\zeta_k(th)w_k)\\
    & =-\frac{\zeta_k(th)^p-1}{p}\int_\Om H(\nabla u_k)^pdx-\frac{\zeta_k(th)^p}{p}\int_\Om \left(H(\nabla w_k)^p-H(\nabla u_k)^p\right)~dx\\
    &\quad -\frac{\zeta_k(th)^q-1}{q}\int_\Om \mu(x) H(\nabla u_k)^qdx-\frac{\zeta_k(th)^q}{q}\int_\Om \mu(x) \left(H(\nabla w_k)^q-H(\nabla u_k)^q\right)~dx\\
    & \quad \quad+ \frac{\zeta_k^{1-\delta}(th)-1}{1-\delta}\int_\Om a(x)|w_k|^{1-\delta}dx+\frac{1}{1-\delta}\int_\Om a(x)\left(|w_k|^{1-\delta}-|u_k|^{1-\delta}\right)~dx\\
    &\quad \quad \quad+ \la \frac{\zeta_k^{p^*}(th)-1}{p^*}\int_\Om b(x)|w_k|^{p^*}dx +\frac{\la}{p^*}\int_\Om b(x)\left(|w_k|^{p^*}-|u_k|^{p^*}\right)~dx.
\end{align*}
Diving both sides of the above equation by $t>0$ and then passing the limit $t\to 0^+$, we get 
\begin{align}
   & \langle \zeta_k^\prime(0),h\rangle \frac{\|u_k\|}{k}+ \frac{\|h\|}{k} \nonumber\\
   & \geq -\langle \zeta^\prime(0),h\rangle \phi_{u_k}^\prime(1) - \int_\Om \left(H^{p-1}(\nabla u_k)+\mu(x)H^{q-1}(\nabla u_k)\right)\nabla_\eta H(\nabla u_k)\nabla h~dx \nonumber\\
   &\quad + \la \int_\Om b(x)u_k^{p^*-1}h~dx +\lim_{t\to 0^+}\frac{1}{1-\delta}\int_\Om \frac{a(x)\left(|w_k|^{1-\delta}-|u_k|^{1-\delta}\right)}{t}dx\nonumber \\
   & \geq - \int_\Om \left(H^{p-1}(\nabla u_k)+\mu(x)H^{q-1}(\nabla u_k)\right)\nabla_\eta H(\nabla u_k)\nabla h~dx \nonumber\\
   & \quad+ \la \int_\Om b(x)u_k^{p^*-1}h~dx + \int_\Om a(x) u_k^{-\delta}h~dx \label{weak-solI-1}
\end{align}
{where we used $a(x)>0$, Fatou's Lemma} and $u_k\in M_\la$ to obtain the last step. From here, employing Lemma \ref{unif-bdd} and boundedness of the sequence $\{u_k\}$ in $W^{1,\mc H}_0(\Om)$ we conclude that 
\[\int_\Om a(x) u_k^{-\delta}h~dx <\infty, \;\forall\;\; 0\leq h\in W^{1,\mc H}_0(\Om)\]
but it is enough to ascertain integrability of '$a(x)u_k^{-\delta}h$', for any $h \in W^{1,\mc H}_0(\Om)$.

Next we prove the estimate \eqref{div3}. To this end, passing limit $k\to \infty$ in \eqref{weak-solI-1} we get 
\begin{align}
     &\int_{\Om}\left(H(\nabla u_k)^{p-1}+\mu(x)H(\nabla u_k)^{q-1}\right)\nabla_{\eta}H(\nabla u_k)\nabla h\,dx\nonumber \\
   &\quad -{\int_{\Om}\left(a(x)u_k^{-\delta}+\la  b(x)u_k^{p^*-1}\right)h\,dx}\geq o_k(1).\label{weak-solI-2}
\end{align}
Now it remains to show that \eqref{weak-solI-2} holds for any arbitrary $h\in W^{1,\mc H}_0(\Om)$. For this purpose,
we test \eqref{weak-solI-2} with $ \psi_\epsilon ^+$, where $\psi_\epsilon= u_k + \epsilon h$, $h\in W^{1,\mc H}_0(\Om)$ is arbitrary and  we obtain 
\begin{align*}
    o_k(1) &\leq \int_{\Om}\left[\left(H(\nabla u_k)^{p-1}+\mu(x)H(\nabla u_k)^{q-1}\right)\nabla_{\eta}H(\nabla u_k)\nabla \psi_\epsilon^+- \left(a(x)u_k^{-\delta}+\la b(x)u_k^{p^*-1}\right)\psi_\epsilon^+\right]dx\\
    &  \leq \int_\Om \left(H(\nabla u_k)^p+\mu(x) H(\nabla u_k)^q\right)dx-\int_\Om \left(a(x)|u_k|^{1-\delta}+\la b(x) |u_k|^{p^*}\right)dx\\
    & \quad + \epsilon\left[\int_{\Om}\left(H(\nabla u_k)^{p-1}+\mu(x)H(\nabla u_k)^{q-1}\right)\nabla_{\eta}H(\nabla u_k)\nabla h\,dx\right.\\
    & \quad \quad \left.-\int_\Om \left(a(x)u_k^{-\delta}+\la b(x)u_k^{p^*-1}\right)h\,dx \right]-\int_\Om \left(a(x)u_k^{-\delta}+\la b(x)u_k^{p^*-1}\right)\psi_\epsilon^-\\
    & \quad \quad\quad  + \int_{\Om}\left(H(\nabla u_k)^{p-1}\nabla_{\eta}H(\nabla u_k)+\mu(x)H(\nabla u_k)^{q-1}\right)\nabla_{\eta}H(\nabla u_k)\nabla \psi_\epsilon^-\,dx\\
    & \leq \epsilon\left[\int_{\Om}\left(H(\nabla u_k)^{p-1}+\mu(x)H(\nabla u_k)^{q-1}\right)\nabla_{\eta}H(\nabla u_k)\nabla h\,dx\right.\\
    & \quad \quad \left.-\int_\Om \left(a(x)u_k^{-\delta}+\la b(x)u_k^{p^*-1}\right)h\,dx \right]-\int_\Om \left(a(x)u_k^{-\delta}+\la b(x)u_k^{p^*-1}\right)\psi_\epsilon^-\\
    & \quad \quad\quad  + \int_{\Om}\left(H(\nabla u_k)^{p-1}+\mu(x)H(\nabla u_k)^{q-1}\right)\nabla_{\eta}H(\nabla u_k)\nabla \psi_\epsilon^-\,dx
\end{align*}
where we used $\psi_\epsilon ^+ = \psi_\epsilon +\psi_\epsilon^-$ and $u_k\in M_\la$. We define $\Om_\epsilon^- = Supp \; \psi_\epsilon^-$ and use it in above inequality to get
\begin{align}
    o_k(1) & \leq \epsilon\left[\int_{\Om}\left(H(\nabla u_k)^{p-1}+\mu(x)H(\nabla u_k)^{q-1}\right)\nabla_{\eta}H(\nabla u_k)\nabla h\,dx\right. \nonumber\\
    & \quad \quad \left.-\int_\Om \left(a(x)u_k^{-\delta}+ \la b(x)u_k^{p^*-1}\right)h\,dx \right]-\int_{\Om_\epsilon^-} \left(a(x)u_k^{-\delta}+ \la b(x)u_k^{p^*-1}\right)\psi_\epsilon^-\nonumber\\
    & \quad \quad\quad  + \int_{\Om_\epsilon^-}\left(H(\nabla u_k)^{p-1}+\mu(x)H(\nabla u_k)^{q-1}\right)\nabla_{\eta}H(\nabla u_k)\nabla \psi_\epsilon^-\,dx\nonumber\\
    & \leq \epsilon\left[\int_{\Om}\left(H(\nabla u_k)^{p-1}+\mu(x)H(\nabla u_k)^{q-1}\right)\nabla_{\eta}H(\nabla u_k)\nabla h\,dx\right.\nonumber\\
    & \quad \quad \left.-\int_\Om \left(a(x)u_k^{-\delta}+\la b(x)u_k^{p^*-1}\right)h\,dx \right] - \la\int_{\Om_\epsilon^-} b(x)u_k^{p^*-1}(u_k+\epsilon h)^-~dx\nonumber\\
    &  \quad+\int_{\Om_\epsilon^-}\left(H(\nabla u_k)^{p-1}+\mu(x)H(\nabla u_k)^{q-1}\right)\nabla_{\eta}H(\nabla u_k)\nabla \psi_\epsilon^-\,dx, \label{weak-solI-3}
\end{align}
where we used $\int_{\Om_\epsilon^-} a(x)u_k^{-\delta}\psi_\epsilon^-~dx>0$. We have the following separate estimates via some easy calculations-
\begin{enumerate}
    \item[(i)] $\int_{\Om_\epsilon^-} H(\nabla u_k)^{p-1}\nabla_{\eta}H(\nabla u_k)\nabla \psi_\epsilon^-\,dx \leq -\epsilon \int_{\Om_\epsilon^-} H(\nabla u_k)^{p-1}\nabla_{\eta}H(\nabla u_k)\nabla h\,dx\leq  \epsilon C \|h\|\|u_k\| $;
    \item[(ii)] $\int_{\Om_\epsilon^-}\mu(x) H(\nabla u_k)^{q-1}\nabla_{\eta}H(\nabla u_k)\nabla \psi_\epsilon^-\,dx \leq -\epsilon \int_{\Om_\epsilon^-} \mu(x)H(\nabla u_k)^{q-1}\nabla_{\eta}H(\nabla u_k)\nabla h\,dx\leq  \epsilon C \|h\|\|u_k\| $;
    \item[(iii)] $\int_{\Om_\epsilon^-} b(x)u_k^{p^*-1}(u_k+\epsilon h)^-~dx \leq \|b\|_\infty \epsilon^{p^*} (\int_{\Om_\epsilon^-} |h|^{p^*}dx) + C\epsilon \|b\|_\infty (\int_{\Om_\epsilon^-}|h|^{p^*})^{\frac{1}{p^*}}$.
\end{enumerate}
Using (i)-(iii) in \eqref{weak-solI-3}, dividing the inequality by $\epsilon$ and passing $\epsilon \to 0^+$, we get
\begin{align*}
   &\lim_{k\to \infty} \int_{\Om}\left(H(\nabla u_k)^{p-1}+\mu(x)H(\nabla u_k)^{q-1}\right)\nabla_{\eta}H(\nabla u_k)\nabla h\,dx\\
   &\quad -\int_{\Om}\left(a(x)u_k^{-\delta}+\la b(x)u_k^{p^*-1}\right)h\,dx=0,\; \text{for any}\; h \in W^{1,\mc H}_0(\Om)
\end{align*}
taking into account $|\Om_\epsilon^-| \to 0$ as $\epsilon \to 0^+$. This completes the proof, since $h$ is arbitrary.
\end{proof}

Define 
\[S_2= \inf_{u\in W^{1,p}_0(\Om)\setminus\{0\}}\frac{\int_\Om H(\nabla u)^p~dx}{\left(\int_\Om|u|^{p^*}~dx \right)^{\frac{p}{p^*}}}.\]

\begin{Proposition}\label{strong-con}
There exists a $\Lambda_{**}>0$ such that when $\la \in (0,{\color{red}\Lambda_{**}})$ and $\lim\limits_{k\to \infty}I_\la(u_k)= m_\la^+$ then there exists $u_0\in W^{1,\mc H}_0(\Om)$ such  $u_k \to u_0$ strongly(up to a subsequence) as $k\to \infty$. Moreover 
\[u_0\in M_\la^+\;\text{and}\; I_\la(u_0)= M_\la^+.\]
\end{Proposition}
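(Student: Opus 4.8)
The plan is to show that the Ekeland minimizing sequence $\{u_k\}\subset M_\la^+$ converges strongly by ruling out the only possible obstruction, namely concentration coming from the critical exponent $p^*$. Throughout I would write $\mc A(u):=(H(\nabla u)^{p-1}+\mu(x)H(\nabla u)^{q-1})\nabla_\eta H(\nabla u)$ and $\langle\mc A(u),h\rangle:=\int_\Om\mc A(u)\cdot\nabla h\,dx$, so that $\langle\mc A(u),u\rangle=\rho_{\mc H}(\nabla u)$ by the $1$-homogeneity of $H$. First I fix $\Lambda_{**}\le\min\{\la_*,\Lambda^*,\lambda_0\}$ (to be shrunk once more below), so that Lemmas \ref{Nest0emp}, \ref{t-1,t-2}, \ref{weak-solI} and Proposition \ref{gap} are all available. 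Since $\{u_k\}\subset M_\la^+$, Proposition \ref{gap} gives $\rho_{\mc H}(\nabla u_k)\le D_2$ with $D_2$ \emph{independent of} $\la$; hence $\{u_k\}$ is bounded uniformly in $\la$, and after passing to a subsequence $u_k\rightharpoonup u_0$ in $\V$, $u_k\to u_0$ in $L^r(\Om)$ for $r<p^*$ and a.e., with $u_0\ge0$. Because $I_\la(u_k)\to m_\la^+<0=I_\la(0)$ we have $u_0\not\equiv0$. Setting $w_k:=u_k-u_0\rightharpoonup0$, the Brezis--Lieb lemma in $L^{p^*}$ gives $\int_\Om b|u_k|^{p^*}=\int_\Om b|u_0|^{p^*}+\int_\Om b|w_k|^{p^*}+o(1)$, while \eqref{mt-1-2} gives $\int_\Om a|u_k|^{1-\delta}\,dx\to\int_\Om a|u_0|^{1-\delta}\,dx$.

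The central step is to reduce everything to the critical concentration $\theta:=\lim_k\int_\Om b|w_k|^{p^*}\,dx$. Since $u_k\in M_\la$, we have $\langle\mc A(u_k),u_k\rangle=\int_\Om a|u_k|^{1-\delta}\,dx+\la\int_\Om b|u_k|^{p^*}\,dx$. Testing the limit identity \eqref{div3} of Lemma \ref{weak-solI} with the \emph{fixed} function $u_0$ yields $\lim_k\langle\mc A(u_k),u_0\rangle=\gamma+\la\int_\Om b|u_0|^{p^*}\,dx$, where $\gamma:=\lim_k\int_\Om a u_k^{-\delta}u_0\,dx\ge\int_\Om a u_0^{1-\delta}\,dx$ by Fatou's lemma (for the critical piece I use $u_k^{p^*-1}\rightharpoonup u_0^{p^*-1}$ in $L^{(p^*)'}(\Om)$). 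Subtracting and inserting the two Brezis--Lieb decompositions gives
\[\lim_k\langle\mc A(u_k),u_k-u_0\rangle=\Big(\int_\Om a u_0^{1-\delta}\,dx-\gamma\Big)+\la\theta\le\la\theta.\]
For the matching lower bound I invoke the strong monotonicity of $\mc A$, valid since $2\le p<q$: there is $C>0$ with $\langle\mc A(u_k)-\mc A(u_0),u_k-u_0\rangle\ge C\rho_{\mc H}(\nabla w_k)$. Combined with $\langle\mc A(u_0),u_k-u_0\rangle\to0$ and the Sobolev inequality $\int_\Om H(\nabla w_k)^p\,dx\ge S_2\big(\int_\Om|w_k|^{p^*}\,dx\big)^{p/p^*}$, this yields $\liminf_k\langle\mc A(u_k),u_k-u_0\rangle\ge C S_2\, m_w^{p/p^*}$, where $m_w:=\lim_k\int_\Om|w_k|^{p^*}\,dx$.

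Putting the two bounds together and using $\theta\le\|b\|_\infty m_w$ gives $C S_2\,m_w^{p/p^*}\le\la\|b\|_\infty m_w$. The hard part is excluded exactly here: if $m_w>0$ then $m_w\ge\big(CS_2/(\la\|b\|_\infty)\big)^{p^*/(p^*-p)}$, a lower bound blowing up as $\la\to0$, whereas the gap bound $\rho_{\mc H}(\nabla u_k)\le D_2$ forces $m_w\le C_{\mathrm{emb}}(D_2)$ with a constant independent of $\la$. Thus, shrinking $\Lambda_{**}$ so that $\big(CS_2/(\Lambda_{**}\|b\|_\infty)\big)^{p^*/(p^*-p)}>C_{\mathrm{emb}}(D_2)$, we reach a contradiction for $\la\in(0,\Lambda_{**})$, so $m_w=0$. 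Then $\theta=0$, whence $\langle\mc A(u_k),u_k-u_0\rangle\to0$; strong monotonicity gives $\rho_{\mc H}(\nabla w_k)\to0$, i.e. $u_k\to u_0$ strongly in $\V$ by Proposition \ref{proposition_modular_properties}(v).

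Finally, for the ``moreover'' part: strong convergence makes every term of $I_\la$, of $\phi'_{u_k}(1)$ and of $\phi''_{u_k}(1)$ pass to the limit (the singular term by Vitali via \eqref{mt-1-2}, the critical term by strong $L^{p^*}$ convergence), so $I_\la(u_0)=m_\la^+<0$, $\phi'_{u_0}(1)=0$ and $\phi''_{u_0}(1)=\lim_k\phi''_{u_k}(1)\ge0$; since $u_0\ne0$ this places $u_0\in M_\la$. As $\Lambda_{**}\le\lambda_0$, Lemma \ref{Nest0emp} gives $M_\la^0=\emptyset$, so $\phi''_{u_0}(1)\ne0$, hence $\phi''_{u_0}(1)>0$ and $u_0\in M_\la^+$; therefore $I_\la(u_0)=\inf_{M_\la^+}I_\la=m_\la^+$. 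The one genuinely delicate point is the concentration analysis of the third paragraph, where the smallness of $\la$ together with the $\la$-independent energy bound of Proposition \ref{gap} is indispensable; by contrast the singular term only requires the soft one-sided Fatou estimate, and the anisotropic operator is handled by its standard strong monotonicity for $p,q\ge2$.
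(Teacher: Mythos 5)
Your proof is correct, but it takes a genuinely different route from the paper's in the crucial step where concentration is excluded. The paper decomposes via Brezis--Lieb ($l_1^p$, $l_2^q$, $d^{p^*}$), tests the limiting Euler--Lagrange identity \eqref{div3} with the $k$-dependent function $h=u_k-u_0$, and then runs a three-case fibering-map comparison ($t_2<1$; $t_2\geq 1$ with $\tfrac{l_1^p}{p}-\tfrac{\la d^{p^*}}{p^*}<0$; the remainder), with the threshold $\Lambda_0$ engineered precisely to kill the second case via the supremum condition on $\int_\Om H(\nabla u)^p\,dx$ over $M_\la^+$. You instead pin the possible energy loss between two bounds: the exact Nehari identity for $u_k$ together with \eqref{div3} tested against the \emph{fixed} function $u_0$ gives $\limsup_k\langle\mathcal A(u_k),u_k-u_0\rangle\le\la\theta\le\la\|b\|_\infty m_w$, while strong monotonicity of the operator (valid since $2\le p<q$ by (H1), though not stated explicitly in the paper) plus the Sobolev constant $S_2$ gives $\liminf_k\langle\mathcal A(u_k),u_k-u_0\rangle\ge CS_2\,m_w^{p/p^*}$; the dichotomy $m_w=0$ or $m_w\ge(CS_2/(\la\|b\|_\infty))^{p^*/(p^*-p)}$ is then resolved by the $\la$-independent bound $\rho_{\mathcal H}(\nabla u_k)\le D_2$ from Proposition \ref{gap}. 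Your version is shorter, avoids both the gradient Brezis--Lieb lemma and the delicate use of \eqref{div3} with a test function varying with $k$, and makes the role of the smallness of $\la$ completely transparent; its price is the reliance on the pointwise strong monotonicity inequality for the anisotropic operator, which restricts to $p\ge 2$ (assumed here) and which you should state and justify explicitly since the paper never records it. The concluding identification $u_0\in M_\la^+$ via $M_\la^0=\emptyset$ is a minor variant of the paper's appeal to Lemma \ref{t-1,t-2} and is equally valid.
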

\begin{proof}By virtue of Proposition \ref{gap}, we choose 
\[\Lambda_0 = \sup\left\{\la>0:\; \sup\left\{\int_\Om H(\nabla u)^p:\; u\in M_\la^+\right\}\leq \left(\frac{p^*S_{2}^{\frac{p^*}{p}}}{\la |b|_\infty p}\right)^{\frac{p}{p^*-p}}\right\}\]
and set $\Lambda_{**}= \min\{\la_*,\Lambda_0, \Lambda_*\}$. We now fix $\la \in (0,\Lambda_{**})$. By the boundedness {and non negativity of} $\{u_k\}$, we say that there exists non negative $u_0\in W^{1,\mc H}_0(\Om)$ such that 
\begin{align*}
    u_k &\rightharpoonup u_0 \;\text{weakly in }\; W^{1,\mc H}_0(\Om)\;\text{and}\; L^{p^*}(\Om), \\
    u_k & \to u_0\;\text{strongly in}\; L^s(\Om),\;\forall\; s \in [1,p^*),\\
    \|u_k\| & \to U\geq 0.
\end{align*}
If $U=0$ then $u_k \to 0$ strongly in $W^{1,\mc H}_0(\Om)$ which will imply $I_\la\to 0>m_\la^+$, a contradiction. Thus $U>0$ and $u_0$ is a non zero function.
We have 
\begin{equation}\label{weak-solI-10}
    \lim_{k \to \infty}\int_\Om b(x) u_k^{p^*-1}u_0~dx = \int_\Om b(x)u_0^{p^*}
\end{equation}
By Example(b)(Pg.488) of \cite{BLieb83} and $\mu, \;b\in L^\infty(\Om)$, we get the following
\begin{align}
    \int_{\Om}H(\nabla u_k)^p~dx &=   \int_{\Om}H(\nabla (u_k-u_0))^p~dx+  \int_{\Om}H(\nabla u_0)^p~dx+o_k(1)\label{weak-solI-4}\\
    \int_{\Om}\mu(x)H(\nabla u_k)^q~dx &=   \int_{\Om}\mu(x)H(\nabla (u_k-u_0))^q~dx+  \int_{\Om}\mu(x)H(\nabla u_0)^q~dx+o_k(1)\label{weak-solI-5}\\
    \int_\Om b(x) |u_k|^{p^*}~dx & = \int_\Om b(x) |u_k-u_0|^{p^*}~dx+ \int_\Om b(x) |u_0|^{p^*}~dx+o_k(1).\label{weak-solI-6}
\end{align}
Using \eqref{weak-solI-10}-\eqref{weak-solI-6} in Lemma \ref{weak-solI} with $h=(u_k-u_0)$, we get
\begin{align}
    o_k(1) &=  \int_{\Om}\left(H(\nabla u_k)^{p-1}+\mu(x)H(\nabla u_k)^{q-1}\right)\nabla_{\eta}H(\nabla u_k)\nabla (u_k-u_0)\,dx\nonumber\\
   &\quad -\int_{\Om}\left(a(x)u_k^{-\delta}+\la b(x)u_k^{p^*-1}\right)(u_k-u_0)\,dx\label{weak-solI-8}\\
   & = \int_{\Om}\left(H(\nabla (u_k-u_0))^p+\mu(x)H(\nabla (u_k-u_0))^q\right)~dx \nonumber\\
   & \quad -\int_\Om a(x)u_k^{-\delta}(u_k-u_0)~dx - \la \int_\Om b(x)|u_k-u_0|^{p^*}~dx \label{weak-solI-7}.
\end{align}
From \eqref{weak-solI-8}, 
\begin{align}
    &\lim_{k\to \infty}\int_{\Om}\left(H(\nabla (u_k-u_0))^p+\mu(x)H(\nabla (u_k-u_0))^q\right)~dx\nonumber \\
    &\quad = \lim_{k\to \infty}\int_\Om a(x)u_k^{-\delta}(u_k-u_0)~dx +\la  \lim_{k\to \infty}\int_\Om b(x)|u_k-u_0|^{p^*}\nonumber\\
    &:= K_a+\la d^{p^*} \;\text{(say)}.\label{weak-solI-9}
\end{align}
By Lebesgue dominated convergence theorem, we have 
\[\lim_{k\to \infty}\int_\Om a(x)u_k^{1-\delta}~dx= \int_\Om a(x)u_0^{1-\delta}~dx.\]
Lemma \ref{weak-solI} says that $a(x)u_k^{-\delta}u_0 \in L^1(\Om)$ and thus applying Fatou's Lemma, we get 
\[\liminf_{k \to \infty}\int_\Om a(x)u_k^{-\delta}u_0~dx \geq  \int_\Om a(x)u_0^{1-\delta}~dx.\]
Implementing this in \eqref{weak-solI-9} and using \eqref{weak-solI-4} and \eqref{weak-solI-5}, we obtain $K_a\leq 0$ that is
\begin{align}
    &\lim_{k\to \infty} \int_\Om \left(H(\nabla u_k)^p+\mu(x)H(\nabla u_k)^q\right)~dx- \int_\Om \left(H(\nabla u_0)^p+\mu(x)H(\nabla u_0)^q\right)~dx\nonumber\\
    &= \lim_{k\to \infty}\int_{\Om}\left(H(\nabla (u_k-u_0))^p+\mu(x)H(\nabla (u_k-u_0))^q\right)~dx \leq \la d^{p^*}.\label{weak-solI-11}
\end{align}
which says $d\geq 0$. If $d=0$ then we get $u_k\to u_0$ strongly in $W^{1,\mc H}_0(\Om)$ from \eqref{weak-solI-11}, hence concluding the proof. Let $d>0$ and assume
\[\lim_{k\to \infty}\int_\Om H(\nabla w_k)^p~dx=l_1^p,\;  \lim_{k\to \infty}\int_\Om \mu(x) H(\nabla w_k)^q~dx=l_2^q \;\text{and}\; \int_\Om b(x)|w_k|^{p^*}~dx=d^{p^*}>0.\]
Using \eqref{weak-solI-10}-\eqref{weak-solI-6} in $\lim_{k\to \infty}I_\la(u_k)=m_\la^+$, we have 
\begin{align}
    I_\la(u_0) + \int_\Om \left( \frac{H(\nabla w_k)^p}{p}+ \frac{\mu(x) H(\nabla w_k)^q}{q}\right)~dx-\frac{\la}{p^*}\int_\Om b(x)|w_k|^{p^*}~dx=m_\la^++o_k(1)
\end{align}
that is 
\begin{equation}\label{weak-solI-12}
    0>m_\la^+ = I_\la(u_0)+\frac{l_1^p}{p}+\frac{l_2^q}{q}-\frac{\la d^{p^*}}{p^*}.
\end{equation}
Since $u_k\in M_\la^+$, again using \eqref{weak-solI-10}-\eqref{weak-solI-6} we get 
\begin{equation}
    o_k(1) + \int_\Om \left( H(\nabla w_k)^p+\mu(x)H(\nabla w_k)^q\right)~dx -\la \int_\Om b(x) |w_k|^{p^*}~dx + \phi_{u_0}^\prime(1)=0
\end{equation}
that is 
\begin{equation}\label{weak-solI-13}
    l_1^p+l_2^q -\la d^{p^*}+\phi_{u_0}^\prime(1)=0.
\end{equation}
Since $0<\la<\la_*$ and $u_0\not\equiv 0$, there exists $0<t_1<t_2$ such that 
$\phi_{u_0}^\prime(t_1)=\phi_{u_0}^\prime(t_2) =0$ and $t_1u_0\in M_\la^+$, $t_2u_0\in M_\la^-$. We study following three cases:
\begin{enumerate}
    \item[(i)] $t_2<1$;
    \item[(ii)]$t_2\geq 1$ and $\frac{l_1^p}{p}-\frac{\la d^{p^*}}{p^*}<0$;
    \item[(iii)]$t_2\geq 1$ and $\frac{l_1^p}{p}-\frac{\la d^{p^*}}{p^*}\geq 0$.
\end{enumerate}
\textbf{Case(i):} Define $g(t)= \phi_{u_0}(t) + \frac{l_1^pt^p}{p}+\frac{l_2^qt^q}{q}-\frac{\la d^{p^*}t^{p^*}}{p^*}$ for $t>0$. Then $\phi_{u_0}^\prime(1)<0$ and \eqref{weak-solI-13} says that $(l_1^p+l_2^q-d^{p^*})>0$. By \eqref{weak-solI-13}, we get $g^\prime(1)=0$ and $t_2<1$, $p<q<p^*$ along with \eqref{weak-solI-11} gives 
\[g^\prime(t_2) >t_2^{q-1}(l_1^p+l_2^q-\la d^{p^*})>0.\]
So $g$ is increasing in $[t_2,1]$. This helps us to obtain from \eqref{weak-solI-12} that
\begin{align*}
    m_\la^+ = g(1) \geq g(t_2) \geq \phi_{u_0}(t_2) +\frac{t_2^q}{q}(l_1^p+l_2^q-\la d^{p^*})>\phi_{u_0}(t_2)> \phi_{u_0}(t_1)\geq m_\la^+
\end{align*}
which is a contradiction.\\
\textbf{Case(ii):} We have $l_1^p\leq d^{p^*}$ from \eqref{weak-solI-13}. Using the definition of $S_2$, we have 
\begin{align*}
     &S_{2}^{\frac{p^*}{p}}\frac{l_1^pp^*}{p}< \la S_2^{\frac{p^*}{p}} d^{p^*}\leq \la  S_2^{\frac{p^*}{p}}|b|_\infty \lim_{k\to \infty}\int_\Om |w_k|^{p^*}~dx \\
     &\leq S_2^{\frac{p^*}{p}}|b|_\infty S_2^{-\frac{p^*}{p}} \left(\lim_{k\to \infty}\int_\Om H(\nabla w_k)^p~dx \right)^{\frac{p^*}{p}}= \la |b|_\infty l_1^{p^*} .
\end{align*}
This easily gives that 
\[l_1^p>\left(\frac{p^*S_{2}^{\frac{p^*}{p}}}{\la|b|_\infty p}\right)^{\frac{p}{p^*-p}}.\]
Now recalling definition of $\Lambda_{0}, \Lambda_{**}$ and $\la \in (0,\Lambda_{**})$, we find that 
\[\sup\left\{\int_\Om H(\nabla u)^p~dx:\; u\in M_\la^+\right\}\leq \left(\frac{p^*S_{2}^{\frac{p^*}{p}}}{\la|b|_\infty p}\right)^{\frac{p}{p^*-p}}<l_1^p \leq   \sup\left\{\int_\Om H(\nabla u)^p~dx:\; u\in M_\la^+\right\}\]
which is a contradiction again.\\
Therefore only Case(iii) is true and in this case, from \eqref{weak-solI-12} we have 
\begin{align*}
    m_\la^+ = I_\la(u_0)+\frac{l_1^p}{p}+\frac{l_2^q}{q}-\frac{\la d^{p^*}}{p^*} \geq \phi_{u_0}(1) \geq \phi_{u_0}(t_2) \geq m_\la^+.
\end{align*}
This straightaway states that 
\[t_2=1 \;\text{and}\; \frac{l_1^p}{p}+\frac{l_2^q}{q}-\frac{d^{p^*}}{p^*}=0.\]
Now since $\phi_{u_0}^\prime(t_2)=\phi_{u_0}^\prime(1) =0$, \eqref{weak-solI-13} says that $l_1^p+l_2^q=d^{p^*}$. Putting this in above, we get 
\[\left(\frac{1}{p}-\frac{1}{p^*}\right)l_1^p +\left(\frac{1}{q}-\frac{1}{p^*}\right)l_2^q=0 \;\text{that is}\; l_1^p=0=l_2^q \]
and thus $d^{p^*}=0$ also. Since $\phi_{u_k}^{\prime \prime}(1)>0$, we get $\phi_{u_0}^{\prime\prime}(1)\geq 0$ but $\la<\la_*$ says that $\phi_{u_0}^{\prime\prime}(1)>0$. Thus $\phi_{u_0}^\prime(1)=0$ tells us $u_0\in M_\la^+$. Also from the convergence results, we have 
\[m_\la^+=\lim_{k\to \infty}I_\la(u_k)= I_\la(u_0).\]
\end{proof}

\begin{Theorem}\label{weak-sol-crit}
$u_0$ defined in Proposition \ref{strong-con} is a weak solution of \eqref{maineqn} when $\la\in (0, \Lambda_{**})$ and $\alpha=p^*-1$.
\end{Theorem}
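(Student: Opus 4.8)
The plan is to obtain the weak formulation \eqref{wksoleqn} for $u_0$ by passing to the limit in the identity \eqref{div3} of Lemma \ref{weak-solI}, along the very sequence $\{u_k\}$ supplied by Proposition \ref{strong-con}, and then to remove the one-sidedness introduced by the singular term through a truncation argument. Throughout I would abbreviate the principal part as $\mc A(u)[h]=\int_\Om\big(H(\nabla u)^{p-1}+\mu(x)H(\nabla u)^{q-1}\big)\nabla_\eta H(\nabla u)\nabla h\,dx$. The crucial input is the strong convergence $u_k\to u_0$ in $\V$ established in Proposition \ref{strong-con}: it lets the regular terms of \eqref{div3} pass to the limit exactly, while the singular term is controlled only from below, by Fatou's lemma.

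First I would pass to the limit in the two regular terms. Up to a subsequence $\nabla u_k\to\nabla u_0$ a.e.\ in $\Om$; since $\zeta\mapsto H(\zeta)^{p-1}\nabla_\eta H(\zeta)$ and $\zeta\mapsto H(\zeta)^{q-1}\nabla_\eta H(\zeta)$ are continuous and positively homogeneous of degrees $p-1$ and $q-1$, the modular convergence of $\{H(\nabla u_k)\}$ supplies the equi-integrability needed for Vitali's theorem, giving $\mc A(u_k)[h]\to\mc A(u_0)[h]$ for every $h\in\V$. As strong convergence in $\V$ forces $u_k\to u_0$ in $L^{p^*}(\Om)$, we get $u_k^{p^*-1}\to u_0^{p^*-1}$ in $L^{p^*/(p^*-1)}(\Om)$, and together with $b\in L^\infty(\Om)$ (hypothesis (H4)) and $h\in L^{p^*}(\Om)$ this yields $\int_\Om b\,u_k^{p^*-1}h\,dx\to\int_\Om b\,u_0^{p^*-1}h\,dx$. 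Inserting these into \eqref{div3} shows that $\lim_{k\to\infty}\int_\Om a\,u_k^{-\delta}h\,dx$ exists, is finite, and equals $\mc A(u_0)[h]-\la\int_\Om b\,u_0^{p^*-1}h\,dx$ for every $h\in\V$.

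Next I would record the resulting variational inequality and deduce positivity. For $h\in\V$ with $h\geq 0$ the integrands $a\,u_k^{-\delta}h$ are nonnegative and converge a.e.\ to $a\,u_0^{-\delta}h$, so Fatou's lemma gives
\[
\mc A(u_0)[h]-\int_\Om a\,u_0^{-\delta}h\,dx-\la\int_\Om b\,u_0^{p^*-1}h\,dx\geq 0\qquad\text{for all }h\in\V,\ h\geq 0.
\]
In particular $\int_\Om a\,u_0^{-\delta}h\,dx<\infty$ for every nonnegative $h\in\V$; testing with an $h$ that is positive on a prescribed set and using $a>0$ a.e.\ (hypothesis (H3)) forces $u_0>0$ a.e.\ in $\Om$, since otherwise the integral would diverge on $\{u_0=0\}$. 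This yields the strict positivity demanded by Definition \ref{wksol}.

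Finally I would upgrade the inequality to an equality. Given arbitrary $\phi\in\V$ and $\epsilon>0$, I would insert $h=(u_0+\epsilon\phi)^+$ into the displayed inequality, split the integrals across $\{u_0+\epsilon\phi\geq 0\}$ and $\{u_0+\epsilon\phi<0\}$, and cancel the leading contribution using $u_0\in M_\la$, exactly following the computation after \eqref{p374} in Lemma \ref{p37}. Since $|\{u_0+\epsilon\phi<0\}|\to 0$ as $\epsilon\to 0^+$, the principal integral over this set vanishes, and the critical remainder $\tfrac{1}{\epsilon}\big|\la\int_{\{u_0+\epsilon\phi<0\}}b\,u_0^{p^*-1}(u_0+\epsilon\phi)\,dx\big|$ is controlled as in \eqref{estI7p37}, now with $\al+1=p^*$ and $b\in L^\infty(\Om)$. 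Dividing by $\epsilon$ and letting $\epsilon\to 0^+$ then gives $\mc A(u_0)[\phi]-\int_\Om a\,u_0^{-\delta}\phi\,dx-\la\int_\Om b\,u_0^{p^*-1}\phi\,dx\geq 0$ for all $\phi\in\V$, and replacing $\phi$ by $-\phi$ produces the reverse inequality, hence equality, which is \eqref{wksoleqn}. The main obstacle is precisely this last step: Fatou delivers only the one-sided inequality for the singular term, so recovering the full equation rests on the truncation estimate over the shrinking set $\{u_0+\epsilon\phi<0\}$, where the loss of compactness of $\V\hookrightarrow L^{p^*}(\Om)$ in the critical case must be absorbed through $b\in L^\infty(\Om)$ and the Sobolev inequality rather than through strong $L^{p^*}$-convergence.
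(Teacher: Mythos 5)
Your proof is correct, and it reaches \eqref{wksoleqn} by a genuinely different organization of the same ingredients. The paper's proof of this theorem is short because the heavy lifting is done at the level of the sequence: Lemma \ref{weak-solI} already performs the $(u_k+\epsilon h)^+$ truncation for each fixed $k$ and delivers the \emph{two-sided} identity \eqref{div3} for arbitrary $h\in\V$, so the theorem's proof only has to (a) establish $u_0>0$ a.e.\ via the energy-quotient argument of Lemma \ref{p3.6} (showing $\frac{I_\la(u_0+th)-I_\la(u_0)}{t}\to-\infty$ if $u_0$ vanished on a set of positive measure) and (b) pass to the limit $k\to\infty$ in \eqref{div3} using the strong convergence of Proposition \ref{strong-con}. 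You instead pass to the limit first, keeping only the one-sided Fatou inequality for nonnegative $h$, deduce positivity from the finiteness of $\liminf_k\int_\Om a u_k^{-\delta}h\,dx$ together with $a>0$ a.e.\ (the integral would be $+\infty$ on $\{u_0=0\}$ against an a.e.\ positive test function), and then run the $(u_0+\epsilon\phi)^+$ truncation of \eqref{p374}--\eqref{estI7p37} at the level of the limit function $u_0\in M_\la$. What your route buys: the positivity argument is more elementary (it bypasses Lemmas \ref{lem3.5} and \ref{p3.6}, whose statements and proofs are written for the subcritical setting and are only implicitly carried over to the critical case by the paper), and you avoid having to justify the convergence $\int_\Om a u_k^{-\delta}h\,dx\to\int_\Om a u_0^{-\delta}h\,dx$ for sign-changing $h$, which the paper's one-line implication glosses over. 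What the paper's route buys: once Lemma \ref{weak-solI} is in place the theorem follows in a few lines, and the truncation is done only once. Both arguments correctly exploit that in the critical case the loss of compactness in $L^{p^*}$ is absorbed by $b\in L^\infty(\Om)$ and the elementary pointwise bound $u_0<\epsilon|\phi|$ on $\{u_0+\epsilon\phi<0\}$, rather than by strong $L^{p^*}$-convergence.
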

\begin{proof}
We first show that $u_0>0$ a.e. in $\Om$ via contradiction. We already know that $u_0\geq 0$ a.e. in $\Om$ and suppose that $u_0 \equiv 0$ on $K$ where $K\subset \Om$ has positive measure. Due to Lemma \ref{p3.6}, we can choose $0<h\in W^{1,\mathcal H}_0(\Om)$ such that $(u_0+th)^{1-\delta}>u_0^{1-\delta}$ in $\Om \setminus K$ for $t\in [0,\beta]$ and 
\begin{align*}
    0 & \leq I_\la(u_0+th)-I_\la(u_0)\\
    & =\frac{1}{p}\int_\Om (H(\nabla(u_0+th))^p-H(\nabla u_0))~dx + \frac{1}{q}\int_\Om (H(\nabla(u_0+th))^q-H(\nabla u_0)^q)~dx\\
    &\quad -\frac{1}{1-\delta}\int_\Om ((u_0+th)^{1-\delta}-u_0^{1-\delta})~dx -\frac{1}{p^*}\int_\Om ((u_0+th)^{p^*}-u_0^{p^*})~dx\\
    & < \frac{1}{p}\int_\Om (H(\nabla(u_0+th))^p-H(\nabla u_0))~dx + \frac{1}{q}\int_\Om (H(\nabla(u_0+th))^q-H(\nabla u_0)^q)~dx\\
    &\quad -\frac{t^{1-\delta}}{1-\delta}\int_K h^{1-\delta}~dx -\frac{1}{p^*}\int_\Om ((u_0+th)^{p^*}-u_0^{p^*})~dx.
\end{align*}
If we divide the above equation by $t>0$ and then pass the limit as $t\to 0^+$ then we obtain 
\[ 0 \leq \lim_{t\to 0^+}\frac{I_\la(u_0+th)-I_\la(u_0)}{t}=- \infty\]
which is a contradiction implying that $u_0>0$ a.e. in $\Omega$.

From Proposition \ref{strong-con}, we know that $u_k\to u_0$ strongly, $u_0\in M_\la^+$ with $m_\la^+$ achieved by $u_0$. Moreover from Lemma \ref{weak-solI}, for any $h\in W^{1,\mathcal H}_0(\Omega)$ we have 
\begin{align*}
   &\lim_{k\to \infty} \int_{\Om}\left(H(\nabla u_k)^{p-1}+\mu(x)H(\nabla u_k)^{q-1}\right)\nabla_{\eta}H(\nabla u_k)\nabla h\,dx\\
   &\quad -\int_{\Om}\left(a(x)u_k^{-\delta}-b(x)u_k^{p^*-1}\right)h\,dx=0,
\end{align*}
\begin{align*}
   \implies &\int_{\Om}\left(H(\nabla u_0)^{p-1}+\mu(x)H(\nabla u_0)^{q-1}\right)\nabla_{\eta}H(\nabla u_0)\nabla h\,dx\\
   &\quad -\int_{\Om}\left(a(x)u_0^{-\delta}-b(x)u_0^{p^*-1}\right)h\,dx=0
\end{align*}which clearly says that $u_0$ is a weak solution to \eqref{maineqn} with $\alpha=p^*-1$, refer \eqref{wksol}. This completes the proof.
\end{proof}

%\section{Proof of the main results}

\textbf{Proof of Theorem \ref{cthm}:} The proof can be easily accomplished by combining Proposition \ref{strong-con} and Theorem \ref{weak-sol-crit}.

%\bibliographystyle{plain}
%\bibliography{bibfile}

\begin{thebibliography}{10}

\bibitem{BGM}
Kaushik Bal, Prashanta Garain, and Tuhina Mukherjee.
\newblock On an anisotropic {$p$}-{L}aplace equation with variable singular
  exponent.
\newblock {\em Adv. Differential Equations}, 26(11-12):535--562, 2021.

\bibitem{BKM1}
Paolo Baroni, Tuomo Kuusi, and Giuseppe Mingione.
\newblock Borderline gradient continuity of minima.
\newblock {\em J. Fixed Point Theory Appl.}, 15(2):537--575, 2014.

\bibitem{BMM20}
Tesfa Biset, Benyam Mebrate, and Ahmed Mohammed.
\newblock A boundary-value problem for normalized {F}insler
  infinity-{L}aplacian equations with singular nonhomogeneous terms.
\newblock {\em Nonlinear Anal.}, 190:111588, 20, 2020.

\bibitem{BLieb83}
Ha\"{\i}m Br\'{e}zis and Elliott Lieb.
\newblock A relation between pointwise convergence of functions and convergence
  of functionals.
\newblock {\em Proc. Amer. Math. Soc.}, 88(3):486--490, 1983.

\bibitem{Colasuonno-Squassina-2016}
Francesca Colasuonno and Marco Squassina.
\newblock Eigenvalues for double phase variational integrals.
\newblock {\em Ann. Mat. Pura Appl. (4)}, 195(6):1917--1959, 2016.

\bibitem{Min3}
Maria Colombo and Giuseppe Mingione.
\newblock Bounded minimisers of double phase variational integrals.
\newblock {\em Arch. Ration. Mech. Anal.}, 218(1):219--273, 2015.

\bibitem{Min2}
Maria Colombo and Giuseppe Mingione.
\newblock Regularity for double phase variational problems.
\newblock {\em Arch. Ration. Mech. Anal.}, 215(2):443--496, 2015.

\bibitem{Crespo-Blanco-Gasinski-Harjulehto-Winkert-2021}
{\'A}ngel {Crespo-Blanco}, Leszek {Gasi{\'n}ski}, Petteri {Harjulehto}, and
  Patrick {Winkert}.
\newblock {A new class of double phase variable exponent problems: Existence
  and uniqueness}.
\newblock {\em arXiv e-prints}, page arXiv:2103.08928, March 2021.

\bibitem{Patpos}
{\'A}ngel {Crespo-Blanco}, Nikolaos~S. {Papageorgiou}, and Patrick {Winkert}.
\newblock {Parametric superlinear double phase problems with singular term and
  critical growth on the boundary}.
\newblock {\em arXiv e-prints}, page arXiv:2106.15511, June 2021.

\bibitem{PF21}
Csaba {Farkas}, Alessio {Fiscella}, and Patrick {Winkert}.
\newblock {Singular Finsler double phase problems with nonlinear boundary
  condition}.
\newblock {\em arXiv e-prints}, page arXiv:2102.05467, February 2021.

\bibitem{PF20}
Csaba {Farkas} and Patrick {Winkert}.
\newblock {An existence result for singular Finsler double phase problems}.
\newblock {\em arXiv e-prints}, page arXiv:2011.03774, November 2020.

\bibitem{Win2}
Csaba Farkas and Patrick Winkert.
\newblock An existence result for singular {F}insler double phase problems.
\newblock {\em J. Differential Equations}, 286:455--473, 2021.

\bibitem{Zhi3}
V.~V. Jikov, S.~M. Kozlov, and O.~A. Ole\u{\i}nik.
\newblock {\em Homogenization of differential operators and integral
  functionals}.
\newblock Springer-Verlag, Berlin, 1994.
\newblock Translated from the Russian by G. A. Yosifian [G. A. Iosif\'yan].

\bibitem{Liu-Dai-2018}
Wulong Liu and Guowei Dai.
\newblock Existence and multiplicity results for double phase problem.
\newblock {\em J. Differential Equations}, 265(9):4311--4334, 2018.

\bibitem{win}
Wulong {Liu}, Guowei {Dai}, Nikolaos~S. {Papageorgiou}, and Patrick {Winkert}.
\newblock {Existence of solutions for singular double phase problems via the
  Nehari manifold method}.
\newblock {\em arXiv e-prints}, page arXiv:2101.00593, January 2021.

\bibitem{Mar1}
Paolo Marcellini.
\newblock The stored-energy for some discontinuous deformations in nonlinear
  elasticity.
\newblock In {\em Partial differential equations and the calculus of
  variations, {V}ol. {II}}, volume~2 of {\em Progr. Nonlinear Differential
  Equations Appl.}, pages 767--786. Birkh\"{a}user Boston, Boston, MA, 1989.

\bibitem{Mar2}
Paolo Marcellini.
\newblock Regularity and existence of solutions of elliptic equations with
  {$p,q$}-growth conditions.
\newblock {\em J. Differential Equations}, 90(1):1--30, 1991.

\bibitem{Ok1}
Jihoon Ok.
\newblock Partial regularity for general systems of double phase type with
  continuous coefficients.
\newblock {\em Nonlinear Anal.}, 177(part B):673--698, 2018.

\bibitem{Ok2}
Jihoon Ok.
\newblock Regularity for double phase problems under additional integrability
  assumptions.
\newblock {\em Nonlinear Anal.}, 194:111408, 13, 2020.

\bibitem{Ragu1}
Maria~Alessandra Ragusa and Atsushi Tachikawa.
\newblock Regularity for minimizers for functionals of double phase with
  variable exponents.
\newblock {\em Adv. Nonlinear Anal.}, 9(1):710--728, 2020.

\bibitem{Zhi2}
V.~V. Zhikov.
\newblock On variational problems and nonlinear elliptic equations with
  nonstandard growth conditions.
\newblock volume 173, pages 463--570. 2011.
\newblock Problems in mathematical analysis. No. 54.

\bibitem{Zhi1}
Vasili\u{\i}~V. Zhikov.
\newblock On {L}avrentiev's phenomenon.
\newblock {\em Russian J. Math. Phys.}, 3(2):249--269, 1995.

\end{thebibliography}
\end{document}